\title{Notes on Pointed Gromov-Hausdorff Convergence}
\author[D.~Jansen]{Dorothea Jansen}
\address{WWU M\"unster\\
  Mathematisches Institut\\
  Einsteinstr.~62\\
  D-48149 M\"unster\\ 
  Germany}
\email{d.jansen@uni-muenster.de}
\date{\today}
\keywords{Gromov-Hausdorff convergence,
  ultralimits}
\subjclass[2010]{53C23} 
\thanks{These notes are based on the appendix of the author's PhD thesis \cite{jansenphd}.} 
\thanks{This work was supported by
  the Gottfried Wilhelm Leibniz-Preis of Prof.~Dr.~Burkhard Wilking 
  and the SFB 878: Groups, Geometry \& Actions, at the University of M\"unster.}
\definecolor{pantone3282}{RGB}{0,142,150} 
\definecolor{pantone369}{RGB}{122,181,22} 
\newtheorem{thm}{Theorem}[section]
\newaliascnt{prop}{thm}
  \newtheorem{prop}[prop]{Proposition}
\newaliascnt{lemma}{thm}
  \newtheorem{lemma}[lemma]{Lemma}
\newaliascnt{cor}{thm}
  \newtheorem{cor}[cor]{Corollary}
\newcommand{\precptnessThm}{Gromov's Pre-compactness Theorem\xspace}
\theoremstyle{definition}
\newaliascnt{defn}{thm}
  \newtheorem{defn}[defn]{Definition}
\newaliascnt{exm}{thm}
  \newtheorem{exm}[exm]{Example}
\newtheorem*{rmk}{Remark}
\DeclareMathOperator{\nn}{\mathbb{N}}
\DeclareMathOperator{\qq}{\mathbb{Q}}
\DeclareMathOperator{\rr}{\mathbb{R}}
\DeclareMathOperator{\eps}{\varepsilon}
\renewcommand{\phi}{\varphi}
\DeclareMathOperator{\diam}{diam}
\DeclareMathOperator{\Ric}{Ric}
\newcommand{\B}{\bar{B}} 
\newcommand{\dH}{d_{\textit{H}}}
\newcommand{\dGH}{d_{\textit{GH}}}
\newcommand{\Isom}[1]{\textit{Appr}_{#1}}
\newcommand{\dghp}[5]{\dGH((\B_{#1}^{#2}(#3),#3),(\B_{#1}^{#4}(#5),#5))}
\newcommand{\Isomp}[6]{\textit{Appr}_{#1}((\B_{#2}^{#3}(#4),#4),(\B_{#2}^{#5}(#6),#6))}
\def\togh{\stackrel{\textit{\tiny GH}}\to}
\def\toghp{\stackrel{\textit{\tiny pGH}}\to}
\DeclareMathOperator{\pt}{pt} 
\DeclareMathOperator{\id}{id} 
\DeclareMathOperator{\im}{im} 
\def\limomega{\lim\nolimits_\omega}
\newcommand{\norm}[1]{\|#1\|}
\def\aand{\xspace\textrm{and}\xspace}
\def\as{\textrm{ as }}
\newcommand{\GH}{Gro\-mov-Haus\-dorff\xspace}
\newcommand{\myquote}[1]{\textquoteleft #1\textquoteright}
\begin{document}

 
\begin{abstract}
 The present article addresses to everyone
 who starts working with (pointed) \GH convergence.
 In the major part, 
 both \GH convergence of compact and of pointed metric spaces
 are introduced and investigated.
 Moreover, the relation of sublimits occurring with pointed \GH convergence
 and ultralimits is discussed. 
\end{abstract}


\maketitle
\tableofcontents


\GH distance is an often used tool for measuring 
how far two compact metric spaces are from being isometric.
This distance, which was introduced by Gromov in \cite{gromov-groups},
leads to the notion of \GH convergence which can be extended to non-compact metric spaces
and allows to draw conclusions about the properties of the spaces \myquote{near} to the limit space, 
if the limit space is well understood.
Many textbooks 
such as \cite[sections 7.3-7.5]{burago}, \cite[section 10.1]{petersen} 
and \cite[p.~70ff.]{bridson-haefliger}
give a (more or less) detailed introduction to the distance of compact metric spaces.
Some even more detailed proofs can be found in \cite{rong}.
Since the literature on convergence of non-compact metric spaces usually is less comprehensive,
this article treats the latter in detail. 
For the sake of completeness,
it also contains a detailed introduction to the compact case,
which is built on the literature cited above.

The first section deals with \GH distance of compact metric spaces. 
In addition, so called \GH approximations are introduced 
and the relation between those two terms is described. 
For both terms, a pointed and a non-pointed version is introduced, 
and it will be proven that these terms result in the same notion of convergence.

The second section deals with convergence of non-compact metric spaces, 
and consists of three parts: 
First, for compact length spaces it will be proven that this notion of convergence coincides 
with the one for compact spaces.
Secondly, several properties of pointed \GH convergence will be verified.
After that, a convergence notion for points will be introduced and studied.
Finally, convergence of (Lipschitz) maps will be investigated.

The third and final section deals with ultralimits, 
a more general tool to create \myquote{limit spaces}, 
and states some properties of those.
In particular, a strong correspondence between ultralimits 
and subsequences converging in the pointed \GH sense will be established.


\section{The compact case}\label{sec:GH-cpt}
Given a metric space, an interesting question is 
whether it is possible to assign each two subsets a distance 
such that this distance in turn defines a metric.
In \cite[Chapter VIII §6]{hausdorff}, Hausdorff answered this question 
by describing what nowadays is called the Hausdorff distance: 
For two subsets of a metric space, this is the minimal radius 
such that each subset is contained in the ball (with this radius) of the other subset.
This was extended by Gromov in \cite[section 6]{gromov-groups} to describe 
how far two compact metric spaces are from being isometric
by mapping two such spaces isometrically into a third one 
and measuring the Hausdorff distance of the images.
(In fact, one can restrict to embedding the two spaces isometrically into their disjoint union.)
This is the so called \GH distance.

\begin{defn}\label{def:dGH-cpt-npt}
 For bounded subsets $A$ and $B$ of a metric space $(X,d)$, 
 the \emph{Hausdorff distance of $A$ and $B$} is defined as
 \begin{align*}
 \dH^d(A,B) &:= \inf \{\eps > 0 \mid A \subseteq B^X_{\eps}(B)~\aand~B \subseteq B^X_{\eps}(A)\}
 \intertext{where $B^X_{\eps}(B) := \{x \in X \mid \exists b \in B: d(x,b) < \eps\}$. 
 For two compact metric spaces $(X,d_X)$ and $(Y,d_Y)$, 
 the \emph{\GH distance of $X$ and $Y$} is defined as}
 \dGH(X,Y) &:= \inf \{ \dH^d(X,Y) \mid d \text{ admissible metric on } X \amalg Y\},
 \end{align*}
 where a metric $d$ on the disjoint union $X \amalg Y$ is called \textit{admissible} 
 if it satisfies $d_{|X \times X} = d_X$ and $d_{|Y \times Y} = d_Y$.
\end{defn}

On the space of (non-empty) compact subspaces of $X$, this $\dH$ defines a metric, 
while $\dGH$ defines a metric on the set of isometry classes of (non-empty) compact metric spaces. 
This will be proven below. From now on, all metric spaces are assumed to be non-empty.
In order to compare two metric spaces with respect to some fixed base points, 
the pointed \GH distance is used. 

\begin{defn}\label{def:dGH-cpt-pt}
 Let $(X,d)$ be a metric space, $A, B \subseteq X$ bounded subsets 
 and $a \in A$, $b \in B$ base points. 
 The \emph{pointed Hausdorff distance of $(A,a)$ and $(B,b)$} is given by
 \begin{align*}
 \dH^d((A,a),(B,b)) &:= \dH^d(A,B) + d(a,b)
 \intertext{and the \emph{pointed \GH distance} between 
 two pointed compact metric spaces $(X,x_0)$ and $(Y,y_0)$ is defined as}
 \dGH((X,x_0),(Y,y_0)) &:= \inf \{ \dH^d((X,x_0),(Y,y_0)) \mid d \text{ adm.~on } X \amalg Y\}.
 \end{align*}
\end{defn}

As in the non-pointed case, the pointed \GH distance defines a metric 
on the set of isometry classes of (non-empty) pointed compact metric spaces. 
In order to prove this, a notion strongly related to the one of \GH distance is used.

\begin{defn}
 Let $(X,d_X)$ and $(Y,d_Y)$ be metric spaces and $\eps > 0$. 
 A pair of (not necessarily continuous) maps $f: X \to Y$ and $g : Y \to X$
 is called \emph{($\eps$-)\GH approximations} or \emph{$\eps$-approximations}
 if for all $x,x_1,x_2 \in X$ and $y,y_1,y_2 \in Y$,
 \begin{align*}
  &|d_X(x_1,x_2) - d_Y(f(x_1),f(x_2))| < \eps, &d_X(g \circ f (x), x) < \eps, \\
  &|d_Y(y_1,y_2) - d_X(g(y_1),g(y_2))| < \eps, &d_Y(f \circ g (y), y) < \eps.  
 \end{align*}
 The set of all such pairs is denoted by $\Isom{\eps}(X,Y)$. 
 In the pointed case, one restricts to pointed maps:
 For $p \in X$ and $q \in Y$,
 \begin{align*}
 \Isom{\eps}((X,p), (Y,q)) := \{(f,g) \in \Isom{\eps}(X,Y) \mid f(p) = q~\aand~g(q) = p\}.
 \end{align*}
\end{defn}

\begin{rmk}
 In the literature, \GH approximations often are not defined as pairs of maps 
 but as one map $f : X \to Y$ where $f$ has \emph{distortion less than $\eps$},
 i.e.~for all $x_1,x_2 \in X$, $f$ satisfies
 \[|d_Y(f(x_1),f(x_2)) - d_X(x_1,x_2)| < \eps, \]
 and $B_{\eps}(f(X)) = Y$. 
 Observe that $(f,g) \in \Isom{\eps}(X,Y)$ already implies that $f$ has these properties 
 (for the same $\eps$).
 
 In the following it will be seen that \GH distance less than $\eps$ 
 corresponds to $\eps$-approximations (up to a factor).
 The next proposition shows that (up to another factor) 
 the definition of \GH approximations used here can be replaced by the one described above.
\end{rmk}

\begin{prop}
 Let $f:(X,d_X) \to (Y,d_Y)$ be a map between metric spaces 
 with distortion smaller than $\eps > 0$. 
 Then there exists a map $g : f(X) \to X$ satisfying $(f,g) \in \Isom{\eps}(X,f(X))$. 
 Moreover, if $Y = B_{\eps}(f(X))$, 
 then there exists a map $h : Y \to X$ such that $(f,h) \in \Isom{3\eps}(X,Y)$.
\end{prop}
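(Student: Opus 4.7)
The plan is to construct both maps by choosing, for each point of the target, a preimage (or near-preimage) of it under $f$, and then verify the four inequalities defining $\Isom{\cdot}$ by repeatedly applying the distortion bound.

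For the first statement, I would use the axiom of choice to pick, for every $y \in f(X)$, a point $g(y) \in f^{-1}(\{y\})$, so that $f \circ g = \id_{f(X)}$ by construction. This immediately gives $d_Y(f \circ g(y), y) = 0 < \eps$. Since $f$ has distortion $<\eps$, the first distortion inequality holds by hypothesis, and applying the distortion bound to the pair $f(g(y_1)), f(g(y_2))$ (which equal $y_1, y_2$) yields the second distortion inequality. Finally, $d_X(g \circ f(x), x) < \eps$ follows from the distortion bound applied to $g(f(x))$ and $x$ together with $f(g(f(x))) = f(x)$.

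For the second statement, I would extend the construction: for each $y \in Y$, use the hypothesis $Y = B_\eps(f(X))$ to pick $h(y) \in X$ with $d_Y(f(h(y)), y) < \eps$. This directly gives the fourth inequality with constant $\eps < 3\eps$. The other three will each cost one or two additional $\eps$: the bound on $d_X(h \circ f(x), x)$ comes from combining $d_Y(f(h(f(x))), f(x)) < \eps$ with the distortion of $f$, giving a $2\eps$ estimate; the distortion inequality for $h$ follows by inserting $f(h(y_1)), f(h(y_2))$ and using the triangle inequality twice plus the distortion of $f$, which accumulates to $3\eps$; and the distortion inequality for $f$ is unchanged.

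There is no real obstacle here — the whole argument is bookkeeping on the triangle inequality — the only subtle point is making sure the accumulated error from inserting near-preimages does not exceed $3\eps$ in the second part, which is tight in the distortion bound for $h$. I would just make sure to present that computation carefully.
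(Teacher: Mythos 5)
Your proposal is correct and follows essentially the same route as the paper: choose exact preimages for $g$ so that $f\circ g=\id_{|f(X)}$, then choose near-preimages for $h$ and track the errors through the triangle inequality, with the distortion bound for $h$ being the step that uses the full $3\eps$. The only (immaterial) difference is that the paper keeps $h=g$ on $f(X)$, which gives $d_X(h\circ f(x),x)<\eps$ rather than your $2\eps$; both are well within the required bound.
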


\begin{proof}
 For each $y \in f(X)$ choose some $g(y) \in f^{-1}(y)$.
 In particular, the such defined map $g$ satisfies $f \circ g = \id_{|f(X)}$.
 For $y_1,y_2 \in f(X)$,
 \begin{align*}
  &|d_X(g(y_1),g(y_2)) - d_Y(y_1,y_2)|
  \\&= |d_X(g(y_1),g(y_2)) - d_Y(f(g(y_1)),f(g(y_2)))|
  < \eps,
 \end{align*}
 and for $x \in X$,
 \begin{align*}
  d(x,g \circ f(x))
  = |d(x,g \circ f(x))) - d(f(x),f(g \circ f(x)))| < \eps.
 \end{align*}
 Thus, $(f,g) \in \Isom{\eps}(X,f(X))$.
 
 Now assume $Y = B_{\eps}(f(X))$. 
 For $y \in f(X)$, define $h(y) := g(y)$, otherwise, 
 choose $y' \in f(X)$ with $d_Y(y,y') < \eps$ and define $h(y) := y'$.
 By construction, $h \circ f = g \circ f$, 
 i.e.~for all $x \in X$, \[d_X(h \circ f (x),x) < \eps.\]
 For arbitrary $y \in Y$, using $f \circ g = \id_{|f(X)}$, 
 $f \circ h (y) = f \circ g (y') = y'$ 
 for $y' \in f(X) \cap B_{\eps}(y)$ as in the definition of $h$.
 Hence, \[d_Y(f \circ h(y),y) = d_Y(y',y) < \eps.\]
 Finally, for arbitrary $y_1,y_2 \in Y$,
 \begin{align*}
  &|d_X(h(y_1),h(y_2)) - d_Y(y_1,y_2)|
  \\&\leq |d_X(h(y_1),h(y_2)) - d_Y(f(h(y_1)),f(h(y_2)))| 
  \\&\quad + |d_Y(f(h(y_1)),f(h(y_2))) - d_Y(y_1,y_2)|
  \\&< \eps + d_Y(f \circ h(y_1),y_1) + d_Y(f \circ h(y_2),y_2)
  \\&< 3 \eps.\qedhere
 \end{align*}
\end{proof}

Next, a strong connection between existence of \GH approximations 
and the \GH distance will be proven.

\begin{prop}\label{dgh_small_iff_eps_approx}
 Let $X$ and $Y$ be compact metric spaces with base points $p \in X$ and $q \in Y$, respectively,
 and $\eps > 0$.
 \begin{enumerate}
  \item\label{dgh_small_iff_eps_approx--a} If $\dGH(X,Y) < \eps$, 
  then $\Isom{2\!\eps}(X,Y) \ne \emptyset$.
  \item\label{dgh_small_iff_eps_approx--b} If $\Isom{\eps}(X,Y) \ne \emptyset$, 
  then $\dGH(X,Y) \leq 2\eps$.
  \item\label{dgh_small_iff_eps_approx--c} If $\dGH((X,p),(Y,q)) < \eps$, 
  then $\Isom{2\!\eps}((X,p),(Y,q)) \ne \emptyset$.
  \item\label{dgh_small_iff_eps_approx--d} If $\Isom{\eps}((X,p),(Y,q)) \ne \emptyset$, 
  then $\dGH((X,p),(Y,q)) \leq 2\eps$.
 \end{enumerate}
\end{prop}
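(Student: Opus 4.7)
The plan is to treat the four parts in two groups: (a) and (c) pass from small \GH distance to \GH approximations, while (b) and (d) go in the reverse direction. The pointed variants (c), (d) differ from the non-pointed ones (a), (b) only in that one has to keep track of the base points.

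For (a) and (c), I would unfold the definition of \GH distance to fix an admissible metric $d$ on $X \amalg Y$ with $\dH^d(X,Y) < \eps$ (respectively $\dH^d(X,Y) + d(p,q) < \eps$). For each $x \in X$ pick $f(x) \in Y$ with $d(x,f(x)) < \eps$, and symmetrically define $g \colon Y \to X$; in the pointed case, overwrite this by setting $f(p) := q$ and $g(q) := p$, which is still consistent with $d(x,f(x)) < \eps$ because $d(p,q) < \eps$. Each of the four conditions in the definition of $\Isom{2\eps}$ then follows from two applications of the triangle inequality in $d$; for instance,
\[|d_X(x_1,x_2) - d_Y(f(x_1),f(x_2))| \leq d(x_1,f(x_1)) + d(x_2,f(x_2)) < 2\eps,\]
and $d_X(g(f(x)),x) \leq d(g(f(x)),f(x)) + d(f(x),x) < 2\eps$.

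For (b) and (d), given $(f,g) \in \Isom{\eps}$, the task is to exhibit an admissible metric on $X \amalg Y$. The plan is to extend $d_X$ and $d_Y$ by the symmetric bridge formula
\[d(x,y) := \inf_{x' \in X}\bigl(d_X(x,x') + \eta + d_Y(f(x'),y)\bigr) \qquad (x \in X,\ y \in Y),\]
for a positive parameter $\eta$ to be tuned. Choosing $x' = x$ gives $d(x,f(x)) \leq \eta$; choosing $x' = g(y)$ together with $d_Y(f(g(y)),y) < \eps$ gives $d(g(y),y) < \eta + \eps$; and in the pointed case choosing $x' = p$ gives $d(p,q) \leq \eta$.

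The main obstacle is verifying that $d$ is in fact an admissible metric -- concretely, that a detour through the other side cannot shortcut the intrinsic distance. The triangle inequalities involving paths of type $X \to Y \to X$ or $Y \to X \to Y$ are the only non-trivial ones; the rest follow directly from those of $d_X$ and $d_Y$. For $x_1, x_2 \in X$ and $y \in Y$, expanding the infima and using the distortion estimate $d_Y(f(x_1'),f(x_2')) > d_X(x_1',x_2') - \eps$ yields
\[d(x_1,y) + d(y,x_2) \geq d_X(x_1,x_2) + 2\eta - \eps,\]
with a symmetric bound for two points of $Y$ separated by a point of $X$. This forces the critical choice $\eta = \eps/2$. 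With this value, $\dH^d(X,Y) \leq \eta + \eps = 3\eps/2 \leq 2\eps$, and in the pointed case $\dH^d((X,p),(Y,q)) = \dH^d(X,Y) + d(p,q) \leq 3\eps/2 + \eps/2 = 2\eps$, yielding both (b) and (d).
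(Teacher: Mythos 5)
Your proposal is correct and follows essentially the same route as the paper: for (a)/(c) you extract near-neighbours from an admissible metric witnessing $\dH^d < \eps$ and adjust at the base points, and for (b)/(d) your bridge metric with the tuned offset $\eta = \eps/2$ is exactly the paper's $d(x,y) = \frac{\eps}{2} + \inf_{x'}(d_X(x,x') + d_Y(f(x'),y))$, with the same identification of the $X \to Y \to X$ and $Y \to X \to Y$ triangle inequalities as the only nontrivial ones and the same final estimates $\dH^d(X,Y) \leq \frac{3\eps}{2}$ and $d(p,q) \leq \frac{\eps}{2}$.
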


\begin{proof}
 As the proofs of \ref{dgh_small_iff_eps_approx--a} and \ref{dgh_small_iff_eps_approx--b},
 respectively, are very similar to, 
 but slightly easier than those 
 of \ref{dgh_small_iff_eps_approx--c} and \ref{dgh_small_iff_eps_approx--d}, respectively, 
 only the latter two are proven here.
 
 \par\smallskip\noindent\ref{dgh_small_iff_eps_approx--c}
 Let $0 < \delta < \eps - \dGH((X,p),(Y,q))$ 
 and choose an admissible metric $d$ with 
 \[\dH^d((X,p),(Y,q)) < \dGH((X,p),(Y,q)) + \delta < \eps.\]
 Then $d(p,q) < \eps$ on the one hand and $\dH^d(X,Y) < \eps$ on the other, 
 i.e.~for all $x \in X$ there exists $y_x \in Y$ that satisfies $d(x,y_x) < \eps$. 
 Analogously, for each $y \in Y$ there is $x_y \in X$ satisfying $d(y,x_y) < \eps$.
 Define $f:X \to Y$ and $g: Y \to X$ by
 \[ f(x) :=	\begin{cases} 
		  q & \text{if } x = p, \\ 
		  y_x & \text{otherwise,} 
		\end{cases}  
		\quad \quad \quad  
    g(y) := 	\begin{cases} 
		  p & \text{if } y = q, \\ 
		  x_y & \text{otherwise.} 
		\end{cases} \]
 As seen above, $d(f(x),x) < \eps$ for all $x \in X$. 
 Thus, for all $x,x' \in X$, 
 \begin{align*}
  |d_Y(f(x),f(x'))- d_X(x,x')|
  &\leq d(f(x),x)) + d(f(x'),x') 
  < 2\eps.
 \end{align*}
 Analogously, $|d_X(g(y),g(y'))- d_Y(y,y')| <2\eps$ for all $y,y' \in Y$.
 Similarly, for $x \in X$,
 \begin{align*}
  d_X(g \circ f (x), x) 
  &= d(g \circ f (x), x)\\
  & \leq d(g (f (x)), f(x)) + d(f(x),x)\\
  & < 2 \eps,
 \end{align*}
 as well as $d_Y(f \circ g(y),y) < 2 \eps$ for all $y \in Y$.
 Thus, \[(f,g) \in \Isom{2\!\eps}((X,p),(Y,q)).\]
 This proves \ref{dgh_small_iff_eps_approx--c}.

 \par\smallskip\noindent\ref{dgh_small_iff_eps_approx--d}
 Fix an arbitrary pair $(f,g) \in \Isom{\eps}((X,p),(Y,q))$. 
 The definition of an admissible metric $d: (X \amalg Y) \times (X \amalg Y) \to \rr$ 
 requires $d_{|X \times X} := d_X$, $d_{|Y \times Y} := d_Y$ 
 and $d(y,x) := d(x,y)$ for $x \in X$ and $y \in Y$.
 Hence, it suffices to define $d(x,y)$ for $x \in X$ and $y \in Y$. 
 Then $d$ is positive definite and symmetric by definition.
 Thus, in order to prove that $d$ is a metric, it remains to check the triangle inequality. 
 If done so, then $d$ is in fact an admissible metric.
 
 Define $d: (X \amalg Y) \times (X \amalg Y) \to \rr$ via 
 \[d(x,y) := \frac{\eps}{2} + \inf\{d_X(x,x') + d_Y(f(x'),y) \mid x' \in X\}\]
 for $x \in X$ and $y \in Y$.
 It remains to check the triangle inequality. 
 For $x_1,x_2 \in X$ and $y \in Y$,
 \begin{align*}
  &d(x_1,x_2)+ d(x_2,y)\\
  &= d_X(x_1,x_2) + \frac{\eps}{2} + \inf\{d_X(x_2,x') + d_Y(f(x'),y) \mid x' \in X \} \\
  &= \frac{\eps}{2} + \inf\{d_X(x_1,x_2) + d_X(x_2,x') + d_Y(f(x'),y) \mid x' \in X \} \\
  &\geq \frac{\eps}{2} + \inf\{d_X(x_1,x') + d_Y(f(x'),y) \mid x' \in X \} \\
  &=d(x_1,y)
  \intertext{and}
  &d(x_1,y)+ d(y,x_2)\\
  &= \eps + \inf\{d_X(x_1,x') +d_Y(f(x'),y) 
  \\& {\color{white} = \eps + \inf\{} 
   + d_X(x_2,x'') + d_Y(f(x''),y) \mid x',x'' \in X\} 
  \displaybreak[0]\\
  &\geq \eps + \inf\{d_X(x_1,x') + d_Y(f(x'),f(x'')) + d_X(x_2,x'') \mid x',x'' \in X\} 
  \displaybreak[0]\\
  &\geq \eps + \inf\{d_X(x_1,x') + (d_X(x',x'') - \eps) + d_X(x_2,x'') \mid x',x'' \in X\} 
  \displaybreak[0]\\
  &\geq \inf\{d_X(x_1,x_2) \mid x',x'' \in X\} \\
  &= d(x_1,x_2).
 \end{align*}
 The two remaining triangle inequalities
 $d(x,y_1) + d(y_1,y_2) \geq d(x,y_2)$ and $d(y_1,x) + d(x,y_2) \geq d(y_1,y_2)$,
 where $x \in X$ and $y_1,y_2 \in Y$, 
 can be proven analogously.

 Using this metric $d$,
 \begin{align*}
  &d(p,q)
  = \frac{\eps}{2} + \inf\{d_X(p,x') + d_Y(f(x'),q) \mid x' \in X\} = \frac{\eps}{2}
  \intertext{since 
  $0 \leq \inf\{d_X(p,x') + d_Y(f(x'),q) \mid x' \in X\} \leq d_X(p,p) + d_Y(f(p),q) = 0$.
  Furthermore, for $x \in X$,}
  &d(x,f(x)) 
  = \frac{\eps}{2} + \inf\{ d_X(x,x') + d_Y(f(x'),f(x)) \mid x' \in X \} = \frac{\eps}{2}
  \intertext{using $x' = x$. For $y \in Y$, this implies}
  &d(y,g(y)) 
  \leq d(y, f \circ g(y)) + d(f \circ g(y), g(y))
  < \eps + \frac{\eps}{2} = \frac{3 \eps}{2}.
 \end{align*}
 Thus, $X \subseteq B^d_{{\eps}/{2}}(f(X)) \subseteq B^d_{{3\eps}/{2}}(Y)$ 
 and $Y \subseteq B^d_{{3 \eps}/{2}}(X)$, 
 i.e.~$\dH^d(X,Y) \leq \frac{3 \eps}{2} $ and 
 \[\dGH((X,p),(Y,q)) \leq \dH^d((X,p),(Y,q)) = \dH^d(X,Y) + d(p,q) \leq 2 \eps.\]
 This proves \ref{dgh_small_iff_eps_approx--d}.
\end{proof}

Using these approximations, one can prove that the pointed \GH distance defines a metric.
Two pointed metric spaces $(X,p)$ and $(Y,q)$ are called \emph{isometric}
if there exists an isometry $f: X \to Y$ with $f(p) = q$.

\begin{prop}
On the space of isometry classes of (pointed) compact metric spaces, $\dGH$ defines a metric.
\end{prop}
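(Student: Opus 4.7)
The plan is to verify the four metric axioms for both the pointed and the non-pointed version simultaneously. Non-negativity is immediate from the definition (an infimum of non-negative numbers), and symmetry follows because the involution swapping the two summands on $X \amalg Y$ carries admissible metrics to admissible metrics while $\dH^d$ and the base-point distance are symmetric. That leaves the triangle inequality and the identity of indiscernibles.

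For the triangle inequality $\dGH(X,Z) \leq \dGH(X,Y) + \dGH(Y,Z)$, I would fix $\delta > 0$ and admissible metrics $d_1$ on $X \amalg Y$ and $d_2$ on $Y \amalg Z$ that realise the two right-hand distances up to $\delta$. Gluing them to a function on $X \amalg Y \amalg Z$ via $d(x,z) := \inf\{d_1(x,y) + d_2(y,z) \mid y \in Y\}$ for $x \in X$, $z \in Z$, and then verifying the triangle inequality (the computation is analogous to the last one in the proof of \autoref{dgh_small_iff_eps_approx}), one obtains an admissible metric on $X \amalg Z$. Since each $y \in Y$ lies $d_1$-close to some $x \in X$ and $d_2$-close to some $z \in Z$, it follows that $\dH^d(X,Z) \leq \dH^{d_1}(X,Y) + \dH^{d_2}(Y,Z)$, and in the pointed case additionally $d(p,r) \leq d_1(p,q) + d_2(q,r)$. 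Taking infima and letting $\delta \to 0$ closes this step.

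The main obstacle is the identity of indiscernibles: if $\dGH((X,p),(Y,q)) = 0$, then $(X,p)$ and $(Y,q)$ are isometric. By \autoref{dgh_small_iff_eps_approx} there exist pointed $(1/n)$-approximations $(f_n,g_n) \in \Isom{1/n}((X,p),(Y,q))$ for each $n \in \nn$. Choose a countable dense subset $\{x_k\}_{k \in \nn} \subseteq X$, which exists because compact metric spaces are separable. Since $Y$ is compact, a Cantor diagonal argument yields a subsequence along which $f_n(x_k) \to f(x_k) \in Y$ for every $k$. The distortion bound $|d_Y(f_n(x_k),f_n(x_l)) - d_X(x_k,x_l)| < 1/n$ passes to the limit, so $f$ preserves distances on $\{x_k\}$ and hence extends uniquely to an isometric embedding $f : X \to Y$ with $f(p) = q$. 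Applying the same argument to $(g_n)$ yields an isometric embedding $g : Y \to X$ with $g(q) = p$, and passing to the limit in the approximate inverse conditions $d_X(g_n \circ f_n(x),x) < 1/n$ and $d_Y(f_n \circ g_n(y),y) < 1/n$ forces $g \circ f = \id_X$ and $f \circ g = \id_Y$ on the dense subsets, whence everywhere by continuity. Hence $f$ is the required isometry. The non-pointed case is identical once the base-point conditions are dropped throughout.
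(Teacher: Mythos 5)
Your proposal is correct and follows essentially the same route as the paper: non-negativity and symmetry from the definition, the triangle inequality by gluing near-optimal admissible metrics through $Y$, and definiteness by a diagonal argument on a countable dense subset producing isometric embeddings in both directions whose composites are the identity (you route this through $(1/n)$-approximations via \autoref{dgh_small_iff_eps_approx}, while the paper works directly with the near-optimal admissible metrics $d_n$ --- a cosmetic difference). The one point you omit is the converse half of definiteness, namely that isometric (pointed) spaces have distance zero, which is needed for $\dGH$ to be well defined on isometry classes in the first place; it is immediate, since a pointed isometry together with its inverse lies in $\Isom{\eps}((X,p),(Y,q))$ for every $\eps>0$.
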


\begin{proof}
In order to prove that the \GH distance indeed defines a metric, 
one needs that the Hausdorff distance defines a metric.
Therefore, this proof splits into several steps: 
First, the Hausdorff distance will be investigated. 
Then it will be proven that the \GH distance defines a pseudo-metric 
on the class of (pointed) compact metric spaces,
i.e.~it is not definite, but satisfies all the other properties of a metric.
Finally, it will be proven that this already defines a metric up to isometry.

\par\smallskip\noindent\textit{Step 1: $\dH$ defines a metric in the non-pointed case.}
Let $(X,d)$ be a metric space and $A,B,C \subseteq X$ be compact. 
First, prove that $\dH$ is a metric in the non-pointed case:

By definition, $\dH^d(B,A) = \dH^d(A,B)$, $\dH^d(A,B) \geq 0$ and $\dH^d(A,A) = 0$.
In order to prove the triangle inequality, 
define $r_1 := \dH^d(A,B) \geq 0$ and $r_2 := \dH^d(B,C) \geq 0$ 
and let $\eps > 0$ be arbitrary.
For $a \in A$ there exists $b \in B$ with $d(a,b) < r_1 + \eps$.
Furthermore, there is $c \in C$ with $d(b,c) < r_2 + \eps$.
Hence, $d(a,c) < r_1 + r_2 + 2 \eps$ and this proves $A \subseteq B_{r_1+r_2+2\eps}(C)$.
An analogous argumentation proves $C \subseteq B_{r_1+r_2+2\eps}(A)$. 
Therefore, $\dH^d(A,C) \leq r_1 + r_2 + 2 \eps$.
Since $\eps > 0$ was arbitrary, 
\[\dH^d(A,C) \leq r_1 + r_2 = \dH^d(A,B) + \dH^d(B,C).\]

Assume that $A \ne B$ and $\dH^d(A,B) = 0$. 
Without loss of generality, assume there exists $a \in A$ with $a \notin B$. 
In particular, $d(a,b) > 0$ for all $b \in B$.
Because $B$ is compact, 
$0 < \inf\{d(a,b) \mid b \in B\} \leq \dH^d(A,B)$,
and this is a contradiction.

\par\smallskip\noindent\textit{Step 2: $\dH$ defines a metric in the pointed case.}
Now fix $a \in A$, $b \in B$ and $c \in C$.
Since $\dH$ is a metric in the non-pointed case, 
\[\dH^d((A,a),(B,b)) = \dH^d(A,B) + d(a,b) \geq 0\] 
and equality holds if and only if $A=B$ and $a=b$.
Obviously, $\dH$ is symmetric and
  \begin{align*}
   \lefteqn{\dH^d((A,a),(B,b)) + \dH^d((B,b),(C,c))}\\
   &= \dH^d(A,B) + \dH^d(B,C) + d(a,b) + d(b,c) \\
   &\geq \dH^d(A,C) + d(a,c) \\
   &= \dH^d((A,a),(C,c)).
  \end{align*}
Thus, $\dH$ defines a metric.

\par\smallskip\noindent\textit{Step 3: $\dGH$ defines a pseudo-metric.}
 From now on, the proof restricts to the case of pointed metric spaces 
 since the other one can be done completely analogously.
 Obviously, $\dGH$ is non-negative and symmetric.
 It remains to prove the triangle inequality.
 Let $(X,x_0)$, $(Y,y_0)$ and $(Z,z_0)$ be pointed compact metric spaces.
 For arbitrary $\eps > 0$, 
 choose admissible metrics $d_{XY}$ on $X \amalg Y$ and $d_{YZ}$ on $Y \amalg Z$ such that
 \begin{align*}
  \dH^{d_{XY}}((X,x_0),(Y,y_0)) &< \dGH((X,x_0),(Y,y_0)) + \eps \quad\aand\\
  \dH^{d_{YZ}}((Y,y_0),(Z,z_0)) &< \dGH((Y,y_0),(Z,z_0)) + \eps.
 \intertext{Define an admissible metric $d_{XZ}$ on $X \amalg Z$ by}
  d_{XZ}(x,z) &= \inf\{d_{XY}(x,y) + d_{YZ}(y,z) \mid y \in Y \}.
 \end{align*}
 This actually defines a metric: 
 Since everything else is obvious,
 only the triangle inequality needs to be checked.
 If all regarded points are contained in $X$ or all in $Z$, there is nothing to prove.
 For $x_1,x_2 \in X$ and $z \in Z$,
 \begin{align*}
  &d_{XZ}(x_1,x_2) + d_{XZ}(x_2,z)\\
  &= d_X(x_1,x_2) + \inf\{d_{XY}(x_2,y') + d_{YZ}(y',z) \mid y' \in Y\} 
  \displaybreak[0]\\
  &= \inf\{d_{XY}(x_1,x_2) + d_{XY}(x_2,y') + d_{YZ}(y',z) \mid y' \in Y\}
  \displaybreak[0]\\
  &\geq \inf\{d_{XY}(x_1,y') + d_{YZ}(y',z) \mid y' \in Y\} \\
  &= d_{XZ}(x_1,z)
  \intertext{and} 
  &d_{XZ}(x_1,z) + d_{XZ}(z,x_2)\\
  &= \inf\{d_{XY}(x_1,y') + d_{YZ}(y',z) + d_{YZ}(z,y'') + d_{XY}(y'',x_2) \mid y',y'' \in Y\} 
  \displaybreak[0]\\
  &\geq \inf\{d_{XY}(x_1,y') + d_{Y}(y',y'') + d_{XY}(y'',x_2) \mid y',y'' \in Y\} 
  \displaybreak[0]\\
  &\geq \inf\{d_{XY}(x_1,y') + d_{XY}(y',x_2) \mid y' \in Y\}
  \displaybreak[0]\\
  &\geq d_X(x_1,x_2)\\
  &= d_{XZ}(x_1,x_2).
 \end{align*}
 The remaining triangle inequalities
 $d_{XZ}(z_1,z_2) + d_{XZ}(z_2,x) \geq d_{XZ}(z_1,x)$ 
 and $d(z_1,x) + d_{XZ}(x,z_2) \geq d_{XZ}(z_1,z_2)$,
 where $x \in X$ and $z_1,z_2 \in Z$, 
 can be proven analogously.
 
 With similar arguments, 
 one can prove that $d_{XYZ}$ defines an admissible metric on $X \amalg Y \amalg Z$ where
 \[
  d_{XYZ}(x,y) := 
  \begin{cases}
   d_{XY}(x,y) &\text{if } x,y \in X \amalg Y, \\
   d_{XZ}(x,y) &\text{if } x,y \in X \amalg Z, \\
   d_{YZ}(x,y) &\text{if } x,y \in Y \amalg Z.
  \end{cases}
 \]
 With those admissible metrics,
 \begin{align*}
  \lefteqn{\dGH((X,x_0),(Z,z_0))}\\
  & \leq \dH^{d_{XYZ}} (X,Z) + d_{XYZ} (x_0,z_0) \\
  &\leq \dH^{d_{XYZ}} (X,Y) + \dH^{d_{XYZ}} (Y,Z) + d_{XYZ} (x_0,y_0) + d_{XYZ} (y_0,z_0) \\
  &\leq \dH^{d_{XY}} (X,Y) + \dH^{d_{YZ}} (Y,Z) + d_{XY} (x_0,y_0) + d_{YZ} (y_0,z_0) \\
  &< \dGH((X,x_0),(Y,y_0)) + \dGH((Y,y_0),(Z,z_0)) + 2 \eps,
 \end{align*}
 where in the second last inequality the fact is used 
 that for every $r > 0$
 the inclusion $X \subseteq B^{d_{XY}}_r(Y)$ implies the inclusion $X \subseteq B^{d_{XYZ}}_r(Y)$.
 Now letting $\eps \to 0$ proves the triangle inequality for $\dGH$.
 
 \par\smallskip\noindent\textit{Step 4: $\dGH$ defines a metric up to isometry.}
 It is easy to see that the distance of isometric pointed compact spaces vanishes: 
 Let $(X,p)$ and $(Y,q)$ be isometric via isometries $f$ and $g$. 
 Then $(f,g) \in \Isom{\eps/2}((X,p),(Y,q))$
 for arbitrary $\eps > 0$. 
 By \autoref{dgh_small_iff_eps_approx}, $\dGH((X,p),(Y,q)) \leq \eps$. 
 Hence, $\dGH((X,p),(Y,q))=0$.
 
 Conversely, let $(X,p)$ and $(Y,q)$ be two pointed compact metric spaces 
 satisfying $\dGH((X,p),(Y,q)) = 0$.
 By definition, for each $n \geq 1$ 
 there is an admissible metric $d_n$ on $X \amalg Y$ with $\dH^{d_n}(X,Y) + d_n(p,q) < \frac{1}{n}$.
 Since $X$ is compact and thus separable, 
 there exists a countable dense subset $X' = \{x_i \mid i \in \nn\} \subseteq X$ with $x_0 = p$.
 
 Define $y^0_n := q$. 
 The constant sequence $(y^0_n)_{n \in \nn}$ converges to $q$, 
 and for each $n$, $d_n(x_0,y^0_n) = d_n(p,q) < \frac{1}{n}$.
 
 Because of $\dH^{d_n}(X,Y) < \frac{1}{n}$, 
 there exists some $y^1_n \in Y$ with $d_n(x_1, y^1_n) < \frac{1}{n}$. 
 Since $Y$ is compact, 
 $(y^1_n)_n$ has a convergent subsequence $(y^1_{n_{i}})_{i \in \nn}$ with some limit $y_1 \in Y$.
 Then 
 \[
  d_{n_{i}} (x_1,y_1) 
  \leq d_{n_{i}}(x_1, y^1_{n_{i}}) + d_{n_{i}}(y^1_{n_{i}}, y_1) 
  \to 0 \as i \to \infty.
 \]
 
 The same argument for $x_2$ gives a subsequence $d_{n_{i_j}}$ of $d_{n_{i}}$ 
 and a point $y_2 \in Y$ with $d_{n_{i_j}}(x_2,y_2) \to 0$ as $j \to \infty$. 
 By a diagonal argument, 
 there is a subsequence $d_l$ of $d_n$ and a sequence $(y_i)_{i \in \nn}$ with $y_0 = q$
 with $d_l(x_i,y_i) \to 0$ as $l \to \infty$ for all $i$.

 Define $f: X' \to Y$ by $f(x_i) := y_i$. Since the $d_l$ are admissible metrics, for each $l$,
 \begin{align*}
 d_Y(f(x_i), f(x_j))
 &= d_l(f(x_i), f(x_j))
 = d_l(y_i,y_j)
 \intertext{and}
 d_X(x_i,x_j)
 &= d_l(x_i,x_j).
 \end{align*}
 Therefore,
 \begin{align*}
 |d_Y(f(x_i), f(x_j)) - d_X(x_i,x_j)|
 &= |d_l(y_i,y_j) - d_l(x_i,x_j)| \\
 &\leq d_l(y_i,x_i) + d_l(x_j,y_j)) \\
 &\to 0 \as l \to \infty.
 \end{align*}
 Hence, $f$ is an isometry.
 Since $X'$ is dense, 
 $f$ can be extended uniquely to an isometric embedding $f : X \to Y$ with $f(p) = q$.
 With a similar construction and using a subsequence of $d_l$, 
 there is an isometric embedding $g : Y \to X$ with $g(q) = p$.
 After passing to this subsequence, for each $x$,
 \begin{align*}
 d_l(g \circ f (x), x)
 \leq d_l(g(f(x)),f(x)) + d_l(f(x), x)
 \to 0 \as {l \to \infty}.
 \end{align*}
 Thus, $f$ is an isometry with $f(p) = q$, i.e.~$(X,p)$ and $(Y,q)$ are isometric.
\qedhere
\end{proof}

The definitions of pointed and non-pointed Gromov-Haus\-dorff distance 
essentially give the same notion of convergence.
This will be proven next.

\begin{prop}\label{prop_GH:comparison_pointed_unpointed_compact}
 Let $X$ and $Y$ be compact metric spaces.
 \begin{enumerate}
  \item\label{prop_GH:comparison_pointed_unpointed_compact-a} 
  For each $x \in X$ and $y \in Y$, 
  \[\dGH(X,Y) \leq \dGH((X,x),(Y,y)).\] 
  \item\label{prop_GH:comparison_pointed_unpointed_compact-b}  
  For any $x \in X$ there exists $y \in Y$ such that 
  \[\dGH((X,x),(Y,y)) \leq 2 \cdot \dGH(X,Y).\]
 \end{enumerate}
\end{prop}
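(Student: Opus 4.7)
Part \ref{prop_GH:comparison_pointed_unpointed_compact-a} is essentially a bookkeeping observation. For any admissible metric $d$ on $X \amalg Y$ and any $x \in X$, $y \in Y$, the inequality $\dH^d(X,Y) \leq \dH^d(X,Y) + d(x,y) = \dH^d((X,x),(Y,y))$ is immediate from the definition, and taking the infimum over admissible metrics on each side yields \ref{prop_GH:comparison_pointed_unpointed_compact-a}.

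Part \ref{prop_GH:comparison_pointed_unpointed_compact-b} is the substantive one. Write $\alpha := \dGH(X,Y)$. The strategy is to produce, for each $n \in \nn$, a candidate base point $y_n \in Y$ for which the pointed distance is almost $2\alpha$, and then use compactness of $Y$ to extract an actual point $y$ that works. Concretely, for each $n$ I would pick an admissible metric $d_n$ on $X \amalg Y$ with $\dH^{d_n}(X,Y) < \alpha + \tfrac{1}{n}$. Since $x \in X \subseteq B^{d_n}_{\alpha + 1/n}(Y)$, there exists some $y_n \in Y$ with $d_n(x,y_n) < \alpha + \tfrac{1}{n}$, and then
\[\dGH((X,x),(Y,y_n)) \leq \dH^{d_n}((X,x),(Y,y_n)) = \dH^{d_n}(X,Y) + d_n(x,y_n) < 2\alpha + \tfrac{2}{n}.\]

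To get a single $y$, I would use compactness of $Y$ to pass to a subsequence with $y_{n_k} \to y \in Y$. The remaining ingredient is the estimate $\dGH((Y,p),(Y,q)) \leq d_Y(p,q)$ for any $p,q \in Y$, which follows by equipping $Y \amalg Y$ with the admissible metric $d(z_1,z_2) := d_Y(z_1,z_2) + \delta$ between the two copies (a quick triangle-inequality check shows this is a metric, and letting $\delta \to 0$ gives the claim). Combining this with the triangle inequality for $\dGH$ proved in the previous proposition yields
\[\dGH((X,x),(Y,y)) \leq \dGH((X,x),(Y,y_{n_k})) + \dGH((Y,y_{n_k}),(Y,y)) < 2\alpha + \tfrac{2}{n_k} + d_Y(y_{n_k},y),\]
and letting $k \to \infty$ gives the desired bound $\dGH((X,x),(Y,y)) \leq 2\alpha$.

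The only step that requires thought is the transition from a sequence $y_n$ depending on the chosen metric to a single $y$; this is precisely where compactness of $Y$ is used, together with the fact that a small perturbation of the base point perturbs the pointed \GH distance by at most the amount of the perturbation. Everything else is a direct unpacking of definitions.
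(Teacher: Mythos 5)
Your argument is correct and follows essentially the same route as the paper: part \ref{prop_GH:comparison_pointed_unpointed_compact-a} is the same infimum comparison, and part \ref{prop_GH:comparison_pointed_unpointed_compact-b} uses the same selection of near-optimal admissible metrics $d_n$, the same choice of $y_n$ with $d_n(x,y_n) < \alpha + \tfrac{1}{n}$, and the same compactness extraction of a limit point $y$. The only divergence is the last step: where you pass from $y_{n_k}$ to $y$ via the triangle inequality for $\dGH$ together with the (correctly justified) auxiliary estimate $\dGH((Y,p),(Y,q)) \leq d_Y(p,q)$, the paper instead estimates $\dH^{d_{n_m}}((X,x),(Y,y))$ directly inside each metric $d_{n_m}$, using $d_{n_m}(x,y) \leq d_{n_m}(x,y_{n_m}) + d_Y(y_{n_m},y)$, which reaches the same bound without any extra lemma.
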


\begin{proof}
 Both statements follow easily from the definitions:
 \par\smallskip\noindent\ref{prop_GH:comparison_pointed_unpointed_compact-a}
 First, let $x \in X$ and $y \in Y$ be arbitrary. Then
  \begin{align*}
   \dGH(X,Y) 
    &= \inf \{ \dH^d(X,Y) \mid d \text{ admissible metric on } X \amalg Y\} \\
    &\leq \inf \{ \dH^d(X,Y) + d(x,y) \mid d \text{ admissible metric on } X \amalg Y\} 
    \displaybreak[0]\\
    &= \inf \{ \dH^d((X,x),(Y,y)) \mid d \text{ admissible metric on } X \amalg Y\} \\
    &= \dGH((X,x),(Y,y)).
  \end{align*}
  
 \par\smallskip\noindent\ref{prop_GH:comparison_pointed_unpointed_compact-b}
 Now let $r := \dGH(X,Y) \geq 0$.
 For arbitrary $n \in \nn$, let $d_n$ be an admissible metric on $X \amalg Y$ satisfying
 \[\dH^{d_n}(X,Y) < \dGH(X,Y) + \frac{1}{n} = r + \frac{1}{n}.\]
 Thus, $X \subseteq \B^{d_n}_{r+1/n}(Y)$, 
 i.e.~for given $x \in X$ there exists $y_n \in Y$ such that $d_n(x,y_n) \leq r+\frac{1}{n}$. 
 Since $Y$ is compact, 
 there exists a convergent subsequence $(y_{n_m})_{m \in \nn}$ of $(y_n)_{n \in \nn}$ 
 with limit $y \in Y$.
 Then
 \begin{align*}
  &\dH^{d_{n_m}}((X,x),(Y,y))\\
  &= \dH^{d_{n_m}}(X,Y) + d_{n_m}(x,y) \\
  &\leq r + \frac{1}{n_m} + d_{n_m}(x,y_{n_m}) + d_{n_m}(y_{n_m},y) \\
  &\leq 2r + \frac{2}{n_m} + d_Y(y_{n_m},y)
 \intertext{and}
  &\dGH((X,x),(Y,y)) \\
  &= \inf \{ \dH^d((X,x),(Y,y)) \mid d \text{ admissible metric on } X \amalg Y\} \\
  &\leq \inf \{ \dH^{d_{n_m}}((X,x),(Y,y)) \mid m \in \nn\} \\
  &\leq \inf \{ 2r + \frac{2}{n_m} + d_Y(y_{n_m},y) \mid m \in \nn\} \\
 & = 2r.
 \qedhere
 \end{align*}
\end{proof}

It is not hard to give examples where the inequality in
\autoref{prop_GH:comparison_pointed_unpointed_compact}
\ref{prop_GH:comparison_pointed_unpointed_compact-a}
is strict 
or where equality in \ref{prop_GH:comparison_pointed_unpointed_compact-b} 
holds for either all or none of the points.
In order to improve readability of the example,
the following two statements are proven first.
\begin{lemma}\label{lem:comparison_hausdorff_distance_pointed_unpointed}
 Let $(X,d)$ be a metric space, $a \in A \subseteq X$ and $b \in X$.
 Then $\dH((A,a), (\{b\},b)) \geq \dH((A,a), (\{a\},a)) = \sup\{d(a,a') \mid a' \in A\}$.
\end{lemma}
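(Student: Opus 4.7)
The plan is to unpack the pointed Hausdorff distance into its two summands and then reduce everything to the triangle inequality for $\dH$ that was verified in Step~1 of the preceding proposition. Recall from \autoref{def:dGH-cpt-pt} that $\dH((A,a),(B,b)) = \dH(A,B) + d(a,b)$, so the lemma is really a statement about the ordinary Hausdorff distance with an additive basepoint correction.

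First I would establish the equality
\[
\dH((A,a),(\{a\},a)) = \sup\{d(a,a') \mid a' \in A\}.
\]
Because $d(a,a)=0$, this reduces to computing $\dH(A,\{a\})$ directly from the definition. The inclusion $\{a\} \subseteq B_\eps^X(A)$ is automatic (since $a \in A$), while $A \subseteq B_\eps^X(\{a\})$ is equivalent to $d(a,a') < \eps$ for every $a' \in A$. Taking the infimum over all such admissible $\eps$ yields exactly $\sup\{d(a,a') \mid a' \in A\}$.

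For the inequality, I would apply the triangle inequality of the non-pointed Hausdorff distance (Step~1 above) to the three bounded sets $A$, $\{b\}$, and $\{a\}$, obtaining
\[
\dH(A,\{a\}) \leq \dH(A,\{b\}) + \dH(\{b\},\{a\}) = \dH(A,\{b\}) + d(a,b),
\]
where the last equality uses the trivial observation that the Hausdorff distance between two singletons coincides with the distance of their points. Adding the (vanishing) basepoint correction $d(a,a) = 0$ on the left and recognizing $d(a,b)$ on the right as precisely the basepoint term of the pointed Hausdorff distance, we conclude
\[
\dH((A,a),(\{a\},a)) = \dH(A,\{a\}) \leq \dH(A,\{b\}) + d(a,b) = \dH((A,a),(\{b\},b)),
\]
which is the desired inequality.

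I do not expect any genuine obstacle here; the whole argument is a bookkeeping exercise combining the triangle inequality of $\dH$ with the fact that the basepoint $a$ lies inside $A$, which kills one of the two inclusions in the definition of the Hausdorff distance. The only minor point to be careful about is the strict versus non-strict inequality in the definition of the open $\eps$-ball, but this is handled by the standard identity $\inf\{\eps>0 \mid d(a,a')<\eps \text{ for all } a'\in A\} = \sup_{a' \in A} d(a,a')$.
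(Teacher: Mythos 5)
Your proof is correct, and the equality part coincides with the paper's computation of $\dH(A,\{a\})$ from the definition. For the inequality, however, you take a genuinely different route: the paper expands $\dH(A,\{b\}) = \max\{\sup\{d(a',b) \mid a' \in A\},\, d(A,b)\}$ explicitly and then pushes the basepoint term $d(a,b)$ inside the supremum, using the pointwise triangle inequality $d(a',b) + d(b,a) \geq d(a',a)$ in $X$; you instead invoke the triangle inequality of $\dH$ itself (Step~1 of the proposition showing $\dGH$ is a metric) for the three sets $A$, $\{b\}$, $\{a\}$, together with the observation that $\dH(\{a\},\{b\}) = d(a,b)$, and then absorb the vanishing correction $d(a,a)=0$. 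Your version is shorter and more conceptual, essentially reading the claim as the statement that $(A,a) \mapsto \dH((A,a),(\{a\},a))$ is a "radius" that can only grow when the comparison point moves; the paper's version is more self-contained and has the side benefit of recording the explicit formula for $\dH(A,\{b\})$, which it reuses immediately in \autoref{lem:comparison_compact_gromov_hausdorff_distance_pointed_unpointed}. One small caveat: Step~1 states the triangle inequality for \emph{compact} subsets, whereas here $A$ is only a (bounded) subset containing $a$; this is harmless because the triangle-inequality argument in Step~1 uses only the definition of the infimum and not compactness (compactness enters only for definiteness), but it is worth a one-line remark if you cite that step verbatim.
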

\begin{proof}
 First,
 recall
 \begin{align*}
  \dH(A,\{b\})
  &= \inf\{r > 0 \mid A \subseteq B_r(b), b \in B_r(A)\}
  \\& = \max\{ \inf\{r > 0 \mid A \subseteq B_r(b)\},
	       \inf\{r > 0 \mid b \in B_r(A)\} \}
  \\& = \max\{ \sup\{d(a',b) \mid a' \in A\}, d(A,b)\}.
 \intertext{In particular,
 \[\dH((A,a), (\{a\},a))
   = \dH(A,\{a\})
   = \sup\{d(a',a) \mid a' \in A\}.\]
 Moreover,}
  \dH((A,a), (\{b\},b))
  &= \dH(A,\{b\}) + d(a,b)
  \\& = \max\{ \sup\{d(a',b) \mid a' \in A\}, d(A,b)\}  + d(a,b) 
  \\&\geq \sup\{d(a',b) + d(a,b) \mid a' \in A\}
  \\&\geq \sup\{d(a',a) \mid a' \in A\}
  \\&= \dH((A,a), (\{a\},a)).
  \qedhere
 \end{align*}
\end{proof}

\begin{prop}\label{lem:comparison_compact_gromov_hausdorff_distance_pointed_unpointed}
 Let $(X,d_X)$ be a compact metric space and $x \in X$.
 Then 
 $\dGH((X,x),(\{\pt\},\pt)) = \sup\{d_X(x,p) \mid p \in X\}$.
\end{prop}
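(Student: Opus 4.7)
The plan is to prove the two inequalities separately, writing $R := \sup\{d_X(x,p) \mid p \in X\}$ for brevity.

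For the lower bound $\dGH((X,x),(\{\pt\},\pt)) \geq R$, I would fix an arbitrary admissible metric $d$ on $X \amalg \{\pt\}$ and apply \autoref{lem:comparison_hausdorff_distance_pointed_unpointed} to the ambient metric space $(X \amalg \{\pt\}, d)$ with $A := X$, $a := x$, and $b := \pt$. The lemma yields
\[\dH^d((X,x),(\{\pt\},\pt)) \geq \dH^d((X,x),(\{x\},x)) = \sup\{d(x,p) \mid p \in X\} = R,\]
and taking the infimum over all admissible $d$ gives the desired inequality.

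For the upper bound, for each $\eps > 0$ I would construct an explicit admissible metric $d_\eps$ on $X \amalg \{\pt\}$ by extending $d_X$ via
\[d_\eps(p,\pt) := d_X(x,p) + \eps \quad \text{for all } p \in X.\]
Positive definiteness is guaranteed by the $\eps$-shift (so $d_\eps(x,\pt) = \eps > 0$), symmetry is automatic, and the remaining triangle inequalities, after cancellation of the $\eps$-contributions, reduce to instances of the triangle inequality for $d_X$ pivoted at the basepoint $x$. Since $X$ is compact the supremum $R$ is attained, so
\[\dH^{d_\eps}(X,\{\pt\}) = R + \eps \quad \text{and} \quad d_\eps(x,\pt) = \eps,\]
which gives
\[\dGH((X,x),(\{\pt\},\pt)) \leq \dH^{d_\eps}((X,x),(\{\pt\},\pt)) = R + 2\eps.\]
Letting $\eps \to 0$ yields the reverse inequality.

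The only real obstacle is cosmetic: the naive candidate $d(p,\pt) := d_X(x,p)$ would collapse $x$ with $\pt$ and fail positive definiteness, forcing the $\eps$-shift and the corresponding limit. Apart from this, both bounds reduce directly to \autoref{lem:comparison_hausdorff_distance_pointed_unpointed} and the triangle inequality in $X$.
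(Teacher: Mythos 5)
Your proof is correct and follows essentially the same route as the paper: the lower bound via \autoref{lem:comparison_hausdorff_distance_pointed_unpointed} applied to an arbitrary admissible metric, and the upper bound by (nearly) identifying $\pt$ with $x$ inside $X$. Your $\eps$-shifted metric $d_\eps$ is in fact a welcome refinement, since the paper's bound $\dGH((X,x),(\{\pt\},\pt)) \leq \dH^{d_X}((X,x),(\{x\},x))$ silently glosses over the fact that the degenerate identification $d(p,\pt) = d_X(p,x)$ is not an admissible metric on the disjoint union.
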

\begin{proof}
 By \autoref{lem:comparison_hausdorff_distance_pointed_unpointed},
 \begin{align*}
  \dGH((X,x),(\{\pt\},\pt))
  &= \inf \{\dH^d((X,x),(\{\pt\},\pt)) \mid d \text{ adm.~on } X \amalg \{\pt\}\}
  \\&\geq \inf \{\sup\{d(x,p) \mid p \in X\} \mid d \text{ adm.~on } X \amalg \{\pt\}\}
  \\&= \sup\{d_X(x,p) \mid p \in X\}
  \\&= \dH^{d_X}((X,x), (\{x\},x)).
  \intertext{On the other hand,}
  \dGH((X,x),(\{\pt\},\pt))
  &\leq \dH^{d_X}((X,x), (\{x\},x))
 \end{align*}
 and this proves the claim.
\end{proof}

The following examples proves 
that $\dGH((X,x),(Y,y))$ may attain any value between $\dGH(X,Y)$ and $2 \cdot \dGH(X,Y)$.
\begin{exm}
 Let $D^2 = \{x \in \rr^2 \mid \norm{x} \leq 1\}$ denote the disk of radius $1$ in $\rr^2$. 
 By \autoref{lem:comparison_compact_gromov_hausdorff_distance_pointed_unpointed},
 for arbitrary $x \in D^2$,
 \[ \dGH((D^2,x),(\{\pt\},\pt)) = \sup\{d_{D^2}(x,p) \mid p \in D^2\} = \norm{x} + 1.\]
 Hence, for any $\lambda \in [1,2]$, 
 every point $x$ with $\norm{x} = \lambda - 1$ satisfies
 \[\dGH((D^2,x), (\{\pt\},\pt)) = \lambda \cdot \dGH(D^2, \{\pt\}).\]
\end{exm}
In particular, two extreme cases occur 
in the situation of \autoref{prop_GH:comparison_pointed_unpointed_compact}:
For $X = D^2$, $Y = \{\pt\}$ and $x = (0,0) \in \rr^2$, 
there is no $y \in Y$ with 
\[\dGH((X,x),(Y,y)) = 2 \cdot \dGH(X,Y).\] 
On the contrary, in this case, 
$\dGH((X,x),(Y,y)) = \dGH(X,Y)$ for all $y \in Y$.
On the other hand, if $x = (1,0) \in \rr^2$, then
\[\dGH((X,x),(Y,y)) = 2 \cdot \dGH(X,Y)\] 
for all $y \in Y$.

\begin{defn}
 Let $(X,d_X,p)$ and $(X_i,d_{X_i},p_i)$, $i \in \nn$, be pointed compact metric spaces.
 \begin{enumerate}
  \item 
   If $\dGH(X_i, X) \to 0$ as $i \to \infty$, 
   then \emph{$X_i$ converges to $X$}.
  \item 
   If $\dGH((X_i,p_i), (X,p)) \to 0$ as $i \to \infty$, 
   then \emph{$(X_i,p_i)$ converges to $(X,p)$}.
 \end{enumerate}
 If $X_i$ converges to $X$, this is denoted by $X_i \to X$.
 If $(X_i,p_i)$ converges to $(X,p)$, this is denoted by $(X_i,p_i) \to (X,p)$.
\end{defn}

\begin{cor}\label{cor:compact_case:pointed=non-pointed_convergence}
 Let $(X,d_X)$ and $(X_i,d_{X_i})$, $i \in \nn$, be compact metric spaces.
 \begin{enumerate}
  \item 
   If $(X_i,x_i) \to (X,x)$ for some $x_i \in X_i$ and $x \in X$, 
   then $X_i \to X$ as well.
  \item 
   If $X_i \to X$ and $x \in X$, 
   then there exist points $x_i \in X_i$ such that $(X_i,x_i) \to (X,x)$.
 \end{enumerate}
\end{cor}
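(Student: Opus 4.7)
The plan is that both parts of the corollary follow immediately from \autoref{prop_GH:comparison_pointed_unpointed_compact} by taking limits, so the write-up is essentially bookkeeping.

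For part (a), I would apply \autoref{prop_GH:comparison_pointed_unpointed_compact}\ref{prop_GH:comparison_pointed_unpointed_compact-a} to the pair $(X_i, X)$ with the given basepoints $x_i$ and $x$. This yields
\[\dGH(X_i, X) \leq \dGH((X_i, x_i), (X, x))\]
for every $i$, and since the right-hand side tends to $0$ by hypothesis, so does the left-hand side. Nothing more is needed.

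For part (b), the idea is to invoke \autoref{prop_GH:comparison_pointed_unpointed_compact}\ref{prop_GH:comparison_pointed_unpointed_compact-b}, but with the roles of the two spaces swapped compared to how the proposition is stated: given $x \in X$, I apply the proposition to the pair $(X, X_i)$ (viewing $X$ as the first space) to obtain a point $x_i \in X_i$ with
\[\dGH((X, x), (X_i, x_i)) \leq 2 \cdot \dGH(X, X_i).\]
By symmetry of $\dGH$, this is the same as $\dGH((X_i, x_i), (X, x)) \leq 2 \cdot \dGH(X_i, X)$. Choosing such an $x_i$ for each $i$ and using $\dGH(X_i, X) \to 0$ gives $(X_i, x_i) \to (X, x)$.

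The only mild subtlety is that part (b) of the proposition provides a basepoint in the \emph{second} space given a basepoint in the \emph{first}, whereas here we are given $x$ in the limit $X$ and must produce basepoints $x_i$ in the approximating $X_i$; but this is resolved instantly by symmetry of $\dGH$, so there is no real obstacle. No new estimates, approximations, or compactness arguments are required beyond what has already been established.
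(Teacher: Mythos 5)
Your proposal is correct and is exactly the argument the paper intends: the corollary is stated there as a direct consequence of \autoref{prop_GH:comparison_pointed_unpointed_compact}, with part a) of the corollary following from part \ref{prop_GH:comparison_pointed_unpointed_compact-a} and part b) from part \ref{prop_GH:comparison_pointed_unpointed_compact-b} applied with the roles of the spaces swapped, just as you do. (The paper's one-line proof cites \autoref{lem:comparison_compact_gromov_hausdorff_distance_pointed_unpointed}, which appears to be a mislabelled reference to the comparison proposition; your write-up supplies the bookkeeping the paper omits.)
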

\begin{proof}
 This is a direct consequence of 
 \autoref{lem:comparison_compact_gromov_hausdorff_distance_pointed_unpointed}.
\end{proof}

Recall that a metric space $(X,d_X)$ is called \emph{length space} if
\[d(x,y) = \inf\{L(c) \mid c \text{ continuous curve from } x \text{ to } y\}\]
for any $x,y \in X$, where $L(c)$ denotes the length of $c$.

\begin{prop}\label{prop:cpt_length_spaces_cvg_to_length_spaces}
 A complete compact \GH limit of compact length spaces is a length space.
\end{prop}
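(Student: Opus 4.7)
The plan is to invoke the standard characterisation that a complete metric space $X$ is a length space if and only if it admits \emph{approximate midpoints}: for every $x, y \in X$ and every $\eps > 0$ there exists $z \in X$ with $\max\{d_X(x, z), d_X(y, z)\} \leq \tfrac{1}{2}d_X(x, y) + \eps$ (see e.g.~\cite{burago}). As $X$ is complete by hypothesis, it therefore suffices to exhibit such a $z$ for an arbitrary triple $x, y, \eps$.

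Fix $x, y \in X$ and $\eps > 0$. Using $\dGH(X_i, X) \to 0$ together with \autoref{dgh_small_iff_eps_approx} \ref{dgh_small_iff_eps_approx--b}, for arbitrarily small $\delta > 0$ I can choose an index $i$ and a pair $(f, g) \in \Isom{\delta}(X_i, X)$, where $f : X_i \to X$ and $g : X \to X_i$. Set $x_i := g(x)$ and $y_i := g(y) \in X_i$. Chaining the defining approximation inequalities with $d_X(f \circ g(x), x) < \delta$ and the analogous bound at $y$ gives $|d_{X_i}(x_i, y_i) - d_X(x, y)| < 3\delta$. Since $X_i$ is a compact length space, one can pick a $\delta$-midpoint $m_i \in X_i$ of $x_i$ and $y_i$, i.e.~$\max\{d_{X_i}(x_i, m_i), d_{X_i}(y_i, m_i)\} \leq \tfrac{1}{2} d_{X_i}(x_i, y_i) + \delta$. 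Define $z := f(m_i) \in X$. A short triangle-inequality computation using the distortion bound for $f$ and the estimate $d_X(f(x_i), x) = d_X(f \circ g(x), x) < \delta$ (and likewise for $y$) yields
\[\max\{d_X(x, z), d_X(y, z)\} \leq \tfrac{1}{2} d_X(x, y) + C\delta\]
for an absolute constant $C$ (a small calculation gives $C = \tfrac{9}{2}$). Choosing $\delta < \eps/C$ produces the required $\eps$-midpoint in $X$.

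The main obstacle is really just the bookkeeping of constants in the displayed estimate: one has to chain three approximation inequalities without losing the factor $\tfrac{1}{2}$ in front of $d_X(x, y)$, which works precisely because the midpoint condition in $X_i$ splits the distance into two halves on which the distortion of $f$ may be applied separately. The auxiliary use of $g$ to pull $x$ and $y$ back to $X_i$, and of $f$ to push $m_i$ forward to $X$, is essential, as it guarantees that the $\eps$-midpoint is realised inside $X$ itself rather than in some ambient admissible space; completeness of $X$ then upgrades the family of $\eps$-midpoints to the full length-space property in the usual way.
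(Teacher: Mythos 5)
Your proof is correct and follows essentially the same route as the paper's: both invoke the approximate-midpoint characterisation of complete length spaces (the paper's \autoref{lem:charact_length_space}), pull $x$ and $y$ back to $X_i$ via $g$, take a midpoint there, and push it forward via $f$, with only the bookkeeping of constants differing. One small slip: the passage from $\dGH(X_i,X)$ small to the existence of $(f,g)$ is \autoref{dgh_small_iff_eps_approx} \ref{dgh_small_iff_eps_approx--a}, not \ref{dgh_small_iff_eps_approx--b}.
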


In the proof, the following statement is used.

\begin{lemma}[{cf.~\cite[Theorem 2.4.16]{burago}}]\label{lem:charact_length_space}
 Let $(X,d)$ be a complete metric space. 
 Then $(X,d)$ is a length space
 if and only if for all $x,y \in X$ and $\eps > 0$ there exists an $\eps$-midpoint, 
 i.e.~a point $z \in X$ with
 $|2 d(x,z) - d(x,y)| \leq \eps$ and $|2 d(y,z) - d(x,y)| \leq \eps$,
\end{lemma}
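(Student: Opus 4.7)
The plan is to prove both directions of the equivalence separately. For the direction \emph{length space $\Rightarrow$ midpoint property}, given $x,y \in X$ and $\eps > 0$, the length space hypothesis provides a continuous curve $c:[0,1] \to X$ from $x$ to $y$ with $L(c) < d(x,y) + \eps$. Using the continuity of the partial length function $t \mapsto L(c|_{[0,t]})$ (which takes values from $0$ to $L(c)$), pick $t_0$ with $L(c|_{[0,t_0]}) = L(c)/2$ and set $z := c(t_0)$. Then $d(x,z) \leq L(c)/2$ and $d(y,z) \leq L(c)/2$. Combining these with $d(x,z) \geq d(x,y) - d(y,z)$ gives $(d(x,y) - \eps)/2 < d(x,z) < (d(x,y) + \eps)/2$, so $|2d(x,z) - d(x,y)| < \eps$, and symmetrically for $y$. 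Thus $z$ is an $\eps$-midpoint.

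For the converse direction, fix $x, y \in X$ and $\eta > 0$. I would construct a continuous curve from $x$ to $y$ of length at most $d(x,y) + \eta$ by iterated approximate midpoint subdivision. Choose positive numbers $\eps_n$ with $\sum_{n \geq 1} 2^{n-1} \eps_n \leq \eta$ (e.g.\ $\eps_n = \eta \cdot 4^{-n}$). Define $c$ on dyadic rationals in $[0,1]$ inductively: set $c(0) := x$ and $c(1) := y$, and for $n \geq 1$ and odd $k$ with $0 < k < 2^n$, let $c(k/2^n)$ be an $\eps_n$-midpoint of $c((k-1)/2^n)$ and $c((k+1)/2^n)$, which exists by hypothesis.

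Two inductive estimates follow from the midpoint inequalities. First, the total polygonal length $\Sigma_n := \sum_k d(c(k/2^n), c((k+1)/2^n))$ satisfies $\Sigma_n \leq \Sigma_{n-1} + 2^{n-1} \eps_n$, and telescoping yields $\Sigma_n \leq d(x,y) + \eta$ for every $n$. Second, the maximum consecutive gap $a_n := \max_k d(c(k/2^n), c((k+1)/2^n))$ satisfies $a_n \leq (a_{n-1} + \eps_n)/2$, which forces $a_n \to 0$ geometrically. The gap bound implies uniform continuity of $c$ on dyadic rationals, so by completeness of $X$ the map extends to a continuous curve $c:[0,1] \to X$ with $c(0) = x$ and $c(1) = y$. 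Since any partition of $[0,1]$ can be replaced by a sufficiently fine dyadic refinement with an arbitrarily small change in the partition sum (using uniform continuity of $c$), we obtain $L(c) \leq \limsup_n \Sigma_n \leq d(x,y) + \eta$. Letting $\eta \to 0$ gives $\inf_c L(c) \leq d(x,y)$, and the reverse inequality holds automatically for continuous curves, so $X$ is a length space.

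The main obstacle is the dyadic bookkeeping in the converse direction: one has to simultaneously control the accumulated total length (to bound $L(c)$) and the individual segment lengths (to obtain uniform continuity, and hence a well-defined continuous extension via completeness). Both estimates are consequences of the midpoint inequalities applied at each level, but the weights $2^{n-1}$ appearing in the total-length recursion force a genuinely summable choice of $\eps_n$, and it is this simultaneous compatibility that the argument must secure.
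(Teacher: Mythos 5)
Your proposal is correct and follows essentially the same route as the paper: the forward direction picks the (approximate) arc-length midpoint of a near-optimal curve, and the converse builds the curve on dyadic rationals by iterated $\eps_n$-midpoints with geometrically decaying accuracies, extends by completeness, and bounds the length. The only cosmetic difference is bookkeeping: you track the total polygonal length $\Sigma_n$ and the maximal gap $a_n$ separately, whereas the paper derives a single per-segment bound $d(\gamma(k/2^m),\gamma((k+1)/2^m))\leq (L+\eps)/2^m$ yielding a Lipschitz curve, from which both the continuity and the length estimate follow at once; your recursion $a_n\leq(a_{n-1}+\eps_n)/2$ is exactly the paper's and delivers the same bound.
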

\begin{proof}
 First, let $(X,d)$ be a length space and $x,y \in X$ and $\eps > 0$ be arbitrary. 
 Since $X$ is a length space, there exists a curve $c: [0,L] \to X$ with 
 $c(0) = x$, $c(L) = y$ and length $L(c) \leq d(x,y) + \eps$. 
 Without loss of generality, assume $c$ to be parametrised by arc length.
 In particular, $L = L(c) \leq d(x,y) + \eps$.

 Define $z := c(\frac{L}{2})$.
 Clearly,
 \begin{align*}
  2d(x,z) 
  \leq 2 \cdot L(c_{|[0,\frac{L}{2}]}) 
  = L \leq d(x,y) + \eps,
 \end{align*}
 and analogously, 
 $2d(y,z) \leq d(x,y) + \eps$. 
 Now assume $d(y,z) - d(x,z) > \eps$.
 Then 
 \begin{align*}
  d(x,y)
  \leq d(x,z) + d(z,y)
  < 2 d(y,z) - \eps
  \leq d(x,y),
 \end{align*}
 and this is a contradiction.
 Hence, 
 \begin{align*}
  d(x,y) - 2 d(x,z) 
  \leq d(y,z) - d(x,z)
  \leq \eps.
 \end{align*}
 Analogously, $|2 d(y,z) - d(x,y)| \leq \eps$. 
 
 Now let $X$ be a metric space 
 such that for all pairs of points and $\eps > 0$ there exists an $\eps$-midpoint,
 and let $x,y \in X$ be arbitrary.
 If for every $\eps>0$ there is a curve $\gamma$ connecting $x$ and $y$ 
 of length $L(\gamma) \leq d(x,y) + \eps$,
 then \[\inf\{L(\gamma) \mid \gamma~\text{connects}~x~\aand~y\} = d(x,y)\] and 
 this proves that $(X,d)$ is a length space.
 
 So, let $L := d(x,y)$, $\eps > 0$ be arbitrary and define $\gamma$ inductively as follows:
 First, let $\gamma(0) = x$ and $\gamma(1) = y$. 
 Now, assume $\gamma(\frac{k}{2^m})$ to be defined 
 for some $m \in \nn$ and all $k \in \nn$ with $0 \leq k \leq 2^m$.
 For odd $1 \leq k \leq 2^{m+1} - 1$, 
 let $\gamma(\frac{k}{2^{m+1}})$ be an
 $\frac{\eps}{2^{2m+1}}$-midpoint 
 of $\gamma(\frac{k-1}{2^{m+1}})$ and $\gamma(\frac{k+1}{2^{m+1}})$.
 
 Inductively,
 $d(\gamma(\frac{k}{2^m}), \gamma(\frac{k+1}{2^m})) 
 \leq \frac{L}{2^m} + \frac{\eps}{2^m} \cdot \sum_{i=1}^{m} \frac{1}{2^i}$: 
 For $m=0$, by definition, $d(\gamma(0), \gamma(1)) = L$.
 Let the statement be true for some $m \in \nn$, 
 and let $0 \leq k \leq 2^{m+1}-1$. 
 First assume $k = 2l + 1$ to be odd.
 Then 
 \begin{align*}
  2 \cdot d\big(\gamma\Big(\frac{k}{2^{m+1}}\Big), \gamma\Big(\frac{k+1}{2^{m+1}}\Big)\big) 
  & \leq \frac{\eps}{2^{2m+1}}
    + d\big(\gamma\Big(\frac{l}{2^{m}}\Big), \gamma\Big(\frac{l+1}{2^{m}}\Big)\big) 
  \\& \leq \frac{\eps}{2^{2m+1}}
    + \frac{L}{2^m} + \frac{\eps}{2^m} \cdot \sum_{i=1}^{m} \frac{1}{2^i}
  \\& = \frac{L}{2^m} + \frac{\eps}{2^m} \cdot \sum_{i=1}^{m+1} \frac{1}{2^i}.
 \end{align*}
 The proof for even $k$ can be done analogously.
 Observe
 \begin{align*}
  d\big(\gamma\Big(\frac{k}{2^m}\Big), \gamma\Big(\frac{k+1}{2^m}\Big)\big) 
  \leq \frac{L}{2^m} + \frac{\eps}{2^m} \cdot \sum_{i=1}^{m} \frac{1}{2^i}
  \leq \frac{L + \eps}{2^m}.
 \end{align*}
 Hence, for all $m \in \nn$ and $0 \leq k < l \leq 2^m$,
 \begin{align*}
  d\big(\gamma\Big(\frac{k}{2^m}\Big), \gamma\Big(\frac{l}{2^m}\Big)\big)
  &\leq \sum_{j=k}^{l-1} d\big(\gamma\Big(\frac{j}{2^m}\Big), \gamma\Big(\frac{j+1}{2^m}\Big)\big)
  \\&\leq \sum_{j=k}^{l-1} \frac{L+\eps}{2^m}
  = (L + \eps) \cdot \Big(\frac{l}{2^m} - \frac{k}{2^m}\Big).
 \end{align*}
 
 In particular, defined as a function on the dyadic numbers in $[0,L]$, $\gamma$ is Lipschitz. 
 Thus, it can be extended to a Lipschitz, hence continuous, curve $\gamma: [0,L] \to X$ 
 where $\gamma(t)$ is defined as the limit of $\gamma(t_n)$ for dyadic numbers $t_n \to t$. 
 For such $0 \leq s < t \leq L$ and dyadic numbers $s_n \to s$ and $t_n \to t$, 
 \begin{align*}
  d(\gamma(s), \gamma(t))
  &= \lim_{n \to \infty} d(\gamma(s_n), \gamma(t_n))
  \\&\leq \lim_{n \to \infty} (L + \eps) \cdot |t_n-s_n|
  = (L + \eps) \cdot (t-s).
 \end{align*}
 Therefore,
 \begin{align*}
  L(\gamma)
  &= \sup\Big\{ \sum_{i=0}^{N-1} d(\gamma(t_i), \gamma(t_{i+1})) 
   \mid N \in \nn, 0 = t_0 < t_1 < \ldots < t_N = 1 \Big\}
  \\&\leq \sup\Big\{ \sum_{i=0}^{N-1} (L + \eps) \cdot (t_{i+1}-t_i)
   \mid N \in \nn,0 = t_0 < t_1 < \ldots < t_N = 1 \Big\}
  \\&= L + \eps.
  \qedhere
 \end{align*}
\end{proof}

\begin{proof}[Proof of \autoref{prop:cpt_length_spaces_cvg_to_length_spaces}]
 Let $x,y \in X$ and $\eps > 0$ be arbitrary. 
 Applying \autoref{lem:charact_length_space},
 it is enough to find an $\eps$-midpoint $z$ of $x$ and $y$.
 
 Choose $i \in \nn$ such that $\dGH(X_i,X) < \frac{\eps}{12}$.
 By \autoref{dgh_small_iff_eps_approx}, 
 there exist $(f,g) \in \Isom{\eps/6}(X_i,X)$.
 Let $z'$ be an $\frac{\eps}{6}$-midpoint of $g(x)$ and $g(y)$, 
 and define $z := f(z')$.
 Then
 \begin{align*}
  |2 d_X(x,z) - d_X(x,y)|
  &\leq |2 d_X(x,z) - 2 d_{X_i}(g(x),g(z))|
  \\&\quad + |2 d_{X_i}(g(x),g(z)) - 2 d_{X_i}(g(x),z')|
  \\&\quad + |2 d_{X_i}(g(x),z') - d_{X_i}(g(x),g(y))|
  \\&\quad + |d_{X_i}(g(x),g(y)) - d_X(x,y)|
  \\& < 2 \cdot \frac{\eps}{6}
      + 2 \cdot d_{X_i}(g \circ f(z'),z')|
      + \frac{\eps}{6}
      + \frac{\eps}{6}
  \\& < \eps. 
 \end{align*}
 Analogously, $|2 d_X(y,z) - d_X(x,y)| < \eps$. 
\end{proof}

In general, the \GH distance of two subsets of the same metric space, 
equipped with the induced metric,
can be estimated by their Hausdorff distance.
If this metric space is a length space and the subsets are balls,
this estimate can be expressed by using the radii and the distance of the base points.
This uses the property of length spaces
that $r$-ball around a ball of radius $s$ 
coincides with the $r+s$ ball (around the same base point).

\begin{lemma}\label{lem_GH:ball_around_ball_is_ball_of_radii-sum}
 Let $(X,d)$ be a length space, $p \in X$ and $r,s > 0$.
 Then \[B_r(B_s(p)) = B_{r+s}(p).\]
\end{lemma}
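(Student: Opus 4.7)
The plan is to prove the two inclusions separately. The inclusion $B_r(B_s(p)) \subseteq B_{r+s}(p)$ is the easy direction and follows purely from the triangle inequality: if $x \in B_r(B_s(p))$, there is some $y$ with $d(y,p) < s$ and $d(x,y) < r$, so $d(x,p) \leq d(x,y) + d(y,p) < r+s$. No use of the length-space property is needed here.

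For the reverse inclusion $B_{r+s}(p) \subseteq B_r(B_s(p))$, I would fix $x \in B_{r+s}(p)$ and exploit the length-space structure as follows. Set $L_0 := d(x,p) < r+s$. Because $(X,d)$ is a length space, for any $\delta > 0$ with $L_0 + \delta < r+s$, there exists a continuous curve $c$ from $p$ to $x$ with length $L := L(c) < L_0 + \delta < r+s$, which I reparametrise by arc length so that $c:[0,L] \to X$ with $c(0)=p$ and $c(L)=x$.

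Now I would choose the intermediate point carefully. If $L < s$, simply take $y := x$; then $d(p,y) \leq L < s$ and $d(x,y) = 0 < r$, giving $x \in B_r(B_s(p))$. Otherwise $L \geq s$, and since $L < r+s$ one has $L - r < s$, so the interval $(\max\{0, L-r\}, s)$ is non-empty; pick any $t_0$ in it and set $y := c(t_0)$. Because $c$ is parametrised by arc length, $d(p,y) \leq t_0 < s$ and $d(y,x) \leq L - t_0 < r$, whence $y \in B_s(p)$ and $x \in B_r(y) \subseteq B_r(B_s(p))$.

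The main (very mild) obstacle is not conceptual but bookkeeping: making sure that the strict inequalities are preserved throughout. The length-space property only gives curves of length arbitrarily close to $d(x,p)$, but since $d(x,p) < r+s$ is already strict, a small $\delta$ can be absorbed without issue, and the case distinction above handles the degenerate situation where $x$ itself may already lie in $B_s(p)$.
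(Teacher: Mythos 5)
Your proof is correct and follows essentially the same strategy as the paper's: the triangle inequality for the easy inclusion, and for the reverse inclusion an intermediate point on a curve from $p$ to $x$ taken at parameter just below $s$. The one difference is that the paper fixes a \emph{shortest geodesic} from $p$ to $q$, whereas you work with a curve of length within $\delta$ of $d(x,p)$; your version is in fact the more careful one, since a general length space need not be geodesic, and the extra $\delta$ is absorbed by the strict inequality $d(x,p) < r+s$ exactly as you describe.
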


\begin{proof}
 Let $q \in B_r(B_s(p))$, i.e.~there exists $x \in B_s(p)$ with $d(x,q) < r$.
 Then \[d(q,p) \leq d(q,x) + d(x,p) < r+s\] proves $B_r(B_s(p)) \subseteq B_{r+s}(p)$.
 In fact, this inclusion holds in every metric space.
 
 Conversely, let $q \in B_{r+s}(p)$. 
 Since $B_s(p) \subseteq B_r(B_s(p))$, 
 without loss of generality, 
 assume $q \in B_{r+s}(p) \setminus B_s(p)$.
 Let $l := d(p,q)$ denote the distance of $p$ and $q$.
 In particular, $s \leq l < r+s$.
 Fix a shortest geodesic $\gamma: [0,l] \to X$ with $\gamma(0) = p$ and $\gamma(l) = q$.
 Define $\eps := \frac{1}{2} \cdot \min\{s,r+s-l\} > 0$ and $t := s-\eps \in (0,s) \subseteq [0,l]$.
 Then 
 \begin{align*}
  d(\gamma(t),p) &= t < s 
  \intertext{and}
  d(\gamma(t),q) &= l-t = l-s + \eps < l-s+r+s-l = r. 
 \end{align*}
 Hence, $\gamma(t) \in B_s(p)$ and $q \in B_r(\gamma(t))$. 
 Thus, $B_{r+s}(p) \subseteq B_r(B_s(p))$.
\end{proof}

\begin{lemma}
 Let $(X,d)$ be a length space, $p,q \in X$, $r,s > 0$.
 Then \[\dH^d(\B_r(p), \B_s(q)) \leq d(p,q) + |r-s|.\]
\end{lemma}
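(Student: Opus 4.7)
The plan is to exploit the length-space definition: for every $\eps > 0$ and every pair $a,b \in X$ there is a continuous curve from $a$ to $b$ of length at most $d(a,b) + \eps$, which we may assume parametrised by arc length. Since the roles of $(p,r)$ and $(q,s)$ in the statement are symmetric, it suffices to establish that for every $\eps > 0$ and every $x \in \B_r(p)$ there exists some $y \in \B_s(q)$ with $d(x,y) \leq d(p,q) + |r-s| + \eps$. Combining this with the analogous statement obtained by swapping the two balls and then letting $\eps \to 0$ will yield the claimed bound on $\dH^d$.

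To produce such a $y$ I would distinguish two cases. If $d(x,q) \leq s$, set $y := x$ and there is nothing more to check. Otherwise $d(x,q) > s$; pick a curve $\gamma:[0,L] \to X$, parametrised by arc length, from $q$ to $x$ with $L \leq d(q,x) + \eps$. Because $s < d(q,x) \leq L$, the point $y := \gamma(s)$ is well defined and satisfies $d(q,y) \leq L(\gamma|_{[0,s]}) = s$, so $y \in \B_s(q)$. Moreover,
\begin{align*}
 d(x,y)
 \leq L(\gamma|_{[s,L]})
 = L - s
 \leq d(q,x) - s + \eps
 \leq d(p,q) + r - s + \eps
 \leq d(p,q) + |r-s| + \eps,
\end{align*}
using $d(q,x) \leq d(p,q) + d(p,x) \leq d(p,q) + r$ in the second-to-last step and $r - s \leq |r-s|$ in the last.

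The symmetric argument, with $(p,r)$ and $(q,s)$ interchanged, gives the reverse inclusion. Together they show that $\dH^d(\B_r(p),\B_s(q)) \leq d(p,q) + |r-s| + \eps$ for every $\eps > 0$, and letting $\eps \to 0$ finishes the proof. The only mildly delicate point is ensuring the almost-geodesic is long enough to walk a distance $s$ along it, which is precisely why the case distinction $d(x,q) \leq s$ versus $d(x,q) > s$ is made; no tools beyond the length-space definition are needed, and in particular \autoref{lem_GH:ball_around_ball_is_ball_of_radii-sum} is not required directly, although morally it expresses the same length-space phenomenon of ``walking along a geodesic''.
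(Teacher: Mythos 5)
Your proof is correct. It rests on the same geometric idea as the paper's --- walk along an (almost-)geodesic from $q$ towards a point $x \in \B_r(p)$ until you land inside $\B_s(q)$ --- but the packaging is different: the paper first observes $B_r(p) \subseteq B_{d(p,q)+r}(q) \subseteq B_{d(p,q)+|r-s|+s}(q)$ by the triangle inequality and then invokes \autoref{lem_GH:ball_around_ball_is_ball_of_radii-sum} to rewrite the last set as $B_{\eps}(B_s(q))$ with $\eps = d(p,q)+|r-s|$, whereas you inline the pointwise construction. Your route buys a small amount of robustness: you only need curves of length at most $d(q,x)+\eps$, which exist in every length space, while the proof of \autoref{lem_GH:ball_around_ball_is_ball_of_radii-sum} fixes a \emph{shortest} geodesic, whose existence relies on the standing properness assumption (via Hopf--Rinow) rather than on the bare length-space hypothesis in the lemma's statement. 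One cosmetic remark: with the paper's definition of $\dH^d$ (strict inequalities in $B^X_{\eps}(\cdot)$), your estimate $d(x,y) \leq d(p,q)+|r-s|+\eps$ gives the inclusion into $B^X_{d(p,q)+|r-s|+2\eps}(\B_s(q))$, so the bound you obtain is $d(p,q)+|r-s|+2\eps$ for each $\eps>0$; since you let $\eps \to 0$ anyway, this changes nothing.
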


\begin{proof}
 Let $\eps := d(p,q) + |r-s|$.
 If $\eps = 0$, the claim holds due to $p=q$ and $r=s$.
 Hence, assume $\eps > 0$. 
 Then, applying \autoref{lem_GH:ball_around_ball_is_ball_of_radii-sum},
 \[
  B_r(p) 
  \subseteq B_{d(p,q) + r}(q) 
  \subseteq B_{d(p,q) + |r-s|+s}(q) 
  = B_{\eps + s}(q) 
  = B_{\eps}(B_s(q)).
 \]
 Analogously, $B_s(q) \subseteq B_{\eps}(B_r(p))$.
 Therefore, \[\dH^d(\B_r(p),\B_s(q)) = \dH^d(B_r(p),B_s(q)) \leq \eps.\qedhere\]
\end{proof}

\begin{cor}\label{lem_GH:estimate_GH-distance_of_balls_in_same_space}
 Let $(X,d)$ be a length space, $p,q \in X$, $r,s > 0$. Then
 \begin{enumerate}
  \item $\dGH((\B^X_r(p),p),(\B^X_s(p),p)) \leq |r-s|$,
  \item $\dghp{r}{X}{p}{X}{q} \leq 2 d(p,q)$.
 \end{enumerate}
\end{cor}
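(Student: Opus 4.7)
The approach is to deduce both estimates directly from the preceding lemma, via the elementary auxiliary fact that for any subsets $A, B$ of a metric space $(X,d)$ with base points $p \in A$, $q \in B$, one has
\[\dGH((A,p),(B,q)) \leq \dH^d(A,B) + d(p,q).\]

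To establish this auxiliary inequality I would fix $\eta > 0$ and define an admissible metric $d_\eta$ on the formal disjoint union $A \amalg B$ by keeping the ambient metric on each copy and setting $d_\eta(a,b) := d(a,b) + \eta$ for $a \in A$ and $b \in B$ regarded as lying in different copies. The $\eta$-perturbation is necessary because $A$ and $B$ may already overlap inside $X$---as they do in part (a), where both balls are centred at $p$---and it ensures positive-definiteness; symmetry is immediate, and the only non-trivial triangle inequalities are of the forms $d_\eta(a_1,a_2) + d_\eta(a_2,b) \geq d_\eta(a_1,b)$ and $d_\eta(a_1,b) + d_\eta(b,a_2) \geq d_\eta(a_1,a_2)$ (plus the symmetric variants obtained by swapping the roles of $A$ and $B$), all of which reduce to the triangle inequality for $d$, with the extra $\eta$-summands only helping the left-hand sides. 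A direct inspection shows $\dH^{d_\eta}(A,B) \leq \dH^d(A,B) + \eta$ and $d_\eta(p,q) = d(p,q) + \eta$, so letting $\eta \to 0$ yields the asserted bound.

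Granting this auxiliary inequality, part (a) follows from the previous lemma applied with $q$ replaced by $p$: it gives $\dH^d(\B_r(p),\B_s(p)) \leq |r-s|$, while the pointed correction $d(p,p)$ vanishes. For part (b), the previous lemma applied with equal radii $s = r$ gives $\dH^d(\B_r(p),\B_r(q)) \leq d(p,q)$, and adding the pointed correction $d(p,q)$ produces the bound $2d(p,q)$. The only technically substantive step is the construction of $d_\eta$, which is a standard device and for which I anticipate no real obstacle.
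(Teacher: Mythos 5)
Your proposal is correct and follows essentially the route the paper intends: the corollary is stated without proof precisely because it is meant to follow from the preceding lemma together with the general principle (mentioned in the surrounding text) that the pointed \GH distance of two subsets of a common metric space is bounded by their pointed Hausdorff distance. Your $\eta$-perturbation of the cross-distances is a careful (and correct) way to handle the positive-definiteness issue on the disjoint union when the two balls overlap, a point the paper silently glosses over.
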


The diameters of metric spaces with small \GH distance are almost the same. 
In particular, for a convergent sequence of metric spaces, 
their diameters converge to the diameter of the limit space.

\begin{prop}\label{prop:GH_small->diff_diam_small}
 For compact metric spaces $(X,d_X)$ and $(Y,d_Y)$, \[|\diam(X)-\diam(Y)| \leq 2 \dGH(X,Y).\]
 In particular, if $X_i \to X$ for compact metric spaces $(X_i,d_{X_i})$, $i \in \nn$, 
 then \[\diam(X_i) \to \diam(X).\]
\end{prop}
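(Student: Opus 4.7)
The plan is to work directly from the definition of \GH distance via admissible metrics, since this gives the tightest constants (using the approximation characterization would introduce a factor of $2$ and weaken the bound).

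First I would reduce the inequality $|\diam(X)-\diam(Y)| \leq 2 \dGH(X,Y)$ to the one-sided bound $\diam(X) \leq \diam(Y) + 2\dGH(X,Y)$ by symmetry. To prove this one-sided bound, I fix an arbitrary $\eps > 0$ and choose an admissible metric $d$ on $X \amalg Y$ with $\dH^d(X,Y) < \dGH(X,Y) + \eps$. Then for any pair $x_1, x_2 \in X$ I pick points $y_1, y_2 \in Y$ with $d(x_i, y_i) < \dGH(X,Y) + \eps$ (these exist because $X \subseteq B^d_{\dGH(X,Y)+\eps}(Y)$). Applying the triangle inequality in $(X \amalg Y, d)$ gives
\[
d_X(x_1,x_2) = d(x_1,x_2) \leq d(x_1,y_1) + d(y_1,y_2) + d(y_2,x_2) < \diam(Y) + 2\bigl(\dGH(X,Y) + \eps\bigr).
\]
Taking the supremum over $x_1, x_2 \in X$ yields $\diam(X) \leq \diam(Y) + 2\dGH(X,Y) + 2\eps$, and letting $\eps \to 0$ finishes the one-sided estimate.

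For the second assertion, I simply apply the inequality just established to the pairs $(X_i, X)$:
\[
|\diam(X_i) - \diam(X)| \leq 2 \dGH(X_i, X) \to 0
\]
as $i \to \infty$, since $X_i \to X$ means $\dGH(X_i,X) \to 0$.

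There is no real obstacle here; the argument is essentially a single application of the triangle inequality in a near-optimal admissible metric, and the only thing to be careful about is not to lose a factor by invoking the approximation characterization instead of the metric definition directly.
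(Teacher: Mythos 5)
Your proof is correct and follows essentially the same route as the paper: choose a near-optimal admissible metric on $X \amalg Y$, transfer a diameter-realising pair of points across via the Hausdorff-distance bound, and apply the triangle inequality, then let the slack tend to zero. The only cosmetic difference is that the paper proves the one-sided bound $\diam(Y) \leq \diam(X) + 2\dGH(X,Y)$ and appeals to symmetry for the other direction, while you prove the mirror-image inequality; the second assertion is handled identically.
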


\begin{proof}
 Let $\eps := \dGH(X,Y)$, $\delta > 0$ and $d$ be an admissible metric on $X \amalg Y$ 
 such that \[\dH^d(X,Y) < \dGH(X,Y) + \delta = \eps + \delta.\] 
 This implies $Y \subseteq B^d_{\eps + \delta}(X)$. 
 Thus, for any $y_1, y_2 \in Y$ there are $x_1, x_2 \in X$ 
 with $d(x_i,y_i) < \eps + \delta$ for $1 \leq i \leq 2$.
 Hence, 
 \[
  d_Y(y_1,y_2) 
  \leq d(y_1,x_1) + d_X(x_1,x_2) + d(x_2,y_2) 
  < 2 \eps + 2 \delta + \diam(X).
 \]
 Therefore, 
 \[
  \diam(Y) 
  = \sup\{d_Y(y_1,y_2) \mid y_1,y_2 \in Y\} 
  \leq \diam(X) + 2 \eps + 2 \delta.
 \]
 Since $\delta > 0$ was arbitrary, $\diam(Y) \leq \diam(X) + 2 \eps$. 
 The other inequality can be proven analogously.
\end{proof}

\begin{cor}
 If $(X,d)$ is a compact metric space and $\{\pt\}$ the space consisting of only one point,
 then $\dGH(X,\{\pt\}) = \frac{1}{2} \cdot \diam(X)$.
\end{cor}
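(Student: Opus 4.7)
The equality splits naturally into two inequalities, both of which are short.

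For the upper bound $\dGH(X,\{\pt\}) \leq \tfrac{1}{2}\diam(X)$, my plan is to exhibit a concrete admissible metric $d$ on $X \amalg \{\pt\}$ and compute $\dH^d$ directly. I would set $d(x,\pt) := \tfrac{1}{2}\diam(X)$ for every $x \in X$ (and of course $d_{|X \times X} = d_X$). The only thing to check is the triangle inequality with one vertex at $\pt$; the delicate case is $d(x_1,x_2) \leq d(x_1,\pt) + d(\pt, x_2)$, which reads $d_X(x_1,x_2) \leq \diam(X)$ and is immediate. With this metric, every point of $X$ lies at distance exactly $\tfrac{1}{2}\diam(X)$ from $\pt$, so $\dH^d(X,\{\pt\}) = \tfrac{1}{2}\diam(X)$, and the bound follows.

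For the lower bound $\dGH(X,\{\pt\}) \geq \tfrac{1}{2}\diam(X)$, I would simply invoke \autoref{prop:GH_small->diff_diam_small}: since $\diam(\{\pt\}) = 0$, the inequality $|\diam(X) - \diam(\{\pt\})| \leq 2 \dGH(X,\{\pt\})$ rearranges at once to give the claim.

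Neither step presents a real obstacle; the only mild subtlety is verifying the triangle inequality for the constructed metric, which is where the definition of diameter is used. Combining the two bounds yields $\dGH(X,\{\pt\}) = \tfrac{1}{2}\diam(X)$.
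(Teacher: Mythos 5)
Your proof is correct and follows essentially the same route as the paper: the lower bound via \autoref{prop:GH_small->diff_diam_small}, and the upper bound via the admissible metric $d(x,\pt) := \tfrac{1}{2}\diam(X)$, whose triangle inequality reduces to $d_X(x_1,x_2) \leq \diam(X)$. No gaps.
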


\begin{proof}
 By \autoref{prop:GH_small->diff_diam_small}, $\diam(X) \leq 2 \cdot \dGH(X,\{\pt\})$.
 Thus, only the other inequality has to be proven.
 
 Let $\delta := \frac{1}{2} \cdot \diam(X)$, 
 and define an admissible metric $d$ on the disjoint union $X \amalg \{\pt\}$ 
 by $d(x,\pt) := \delta$.
 As usually, only the triangle inequality needs to be checked. 
 For arbitrary $x_1,x_2 \in X$,
 \begin{align*}
  &d(x_1,x_2) + d(x_2,\pt) = d(x_1,x_2) + \delta \geq \delta = d(x_1,\pt) \quad \aand\\
  &d(x_1,\pt) + d(\pt,x_2) = 2 \delta = \diam(X) \geq d(x_1,x_2).
 \end{align*}
 Using this metric,
 \[\dGH(X,\{\pt\}) \leq \dH^d(X,\{\pt\}) = \delta.\qedhere\]
\end{proof}

For a metric space $(X,d_X)$, 
let $\lambda X$ denote the rescaled metric space $(\lambda X, d_{\lambda X}) := (X, \lambda d_X)$.
Rescaling of compact metric spaces behaves nicely under \GH distance.
For any $p \in X$ and $r > 0$, observe 
\[B_r^X(p) 
  = \{q \in X \mid d_X(q,p) < r\} 
  =  \{q \in X \mid \lambda d_X(q,p) < \lambda r\} 
  = B_{\lambda r}^{\lambda X}(p). 
\]

\begin{lemma}
Let $(X,d_X)$ and $(Y, d_Y)$ be compact metric spaces.
  
For the Hausdorff distance, $\dH^{\lambda X}= \lambda \cdot \dH^X$ 
(both in the standard and in the pointed case).
  
For the \GH distance, both $\dGH(\lambda X, \lambda Y) = \lambda \cdot  \dGH(X,Y)$ 
and, for all $x \in X$ and $y \in Y$, 
$\dGH((\lambda X,x), (\lambda Y,y)) = \lambda \cdot \dGH((X,x),(Y,y))$. 
\end{lemma}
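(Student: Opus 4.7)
The plan is to handle the three assertions in order, leveraging throughout the bijection between admissible metrics given by $d \mapsto \lambda d$.

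First, for the Hausdorff distance on subsets of a single space $(X,d_X)$, I would begin from the observation that for any $A, B \subseteq X$ and $\eps > 0$, one has $A \subseteq B^X_\eps(B)$ if and only if $A \subseteq B^{\lambda X}_{\lambda \eps}(B)$, since the balls $B^X_\eps(B)$ and $B^{\lambda X}_{\lambda \eps}(B)$ are literally the same subset of $X$. Substituting $\eps' = \lambda \eps$ in the infimum defining $\dH^{\lambda X}$ then gives
\[
 \dH^{\lambda X}(A,B) = \inf\{\lambda \eps > 0 \mid A \subseteq B^X_\eps(B), B \subseteq B^X_\eps(A)\} = \lambda \cdot \dH^{X}(A,B).
\]
For the pointed version, the additive term $d_X(a,b)$ simply scales to $\lambda d_X(a,b)$, so factoring out $\lambda$ yields the analogous equality.

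Second, for the non-pointed Gromov-Hausdorff distance, the key observation is that $d$ is an admissible metric on $X \amalg Y$ if and only if $\lambda d$ is an admissible metric on $\lambda X \amalg \lambda Y$: the metric axioms are preserved under positive scaling, and the restrictions satisfy $(\lambda d)_{|X \times X} = \lambda d_X = d_{\lambda X}$ and similarly for $Y$. This gives a bijection between the two sets of admissible metrics. Combining with the first step, $\dH^{\lambda d}(\lambda X, \lambda Y) = \lambda \cdot \dH^{d}(X, Y)$, and taking the infimum over all admissible metrics yields
\[
 \dGH(\lambda X, \lambda Y) = \inf_{d} \lambda \cdot \dH^{d}(X,Y) = \lambda \cdot \dGH(X,Y).
\]

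Third, the pointed Gromov-Hausdorff claim follows from exactly the same bijection of admissible metrics, now combined with the pointed version of the Hausdorff equality established in the first step. I expect no real obstacle: the whole lemma is essentially an unpacking of definitions, with the only substantive point being that the scaling $d \mapsto \lambda d$ is a bijection on admissible metrics that scales the (pointed) Hausdorff distance by exactly $\lambda$.
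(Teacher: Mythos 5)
Your proposal is correct and follows essentially the same route as the paper's proof: rescale the radius in the definition of the Hausdorff distance, and use that $d \mapsto \lambda d$ is a bijection between admissible metrics on $X \amalg Y$ and on $\lambda X \amalg \lambda Y$ before passing to the infimum. No gaps.
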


\begin{proof}
  First, let $A,B \subseteq X$. Then
  \begin{align*}
   \dH^{\lambda X}(A,B)
   &= \inf\{\eps > 0 
       \mid A \subseteq B^{\lambda X}_{\eps}(B)~\aand~B \subseteq B^{\lambda X}_{\eps}(A)\} \\
   &= \inf\{\lambda \tilde{\eps} > 0 
       \mid A \subseteq B^X_{\tilde{\eps}}(B)~\aand~B \subseteq B^X_{\tilde{\eps}}(A)\} \\
   &= \lambda \cdot \inf\{\tilde{\eps} > 0 
       \mid A \subseteq B^X_{\tilde{\eps}}(B)~\aand~B \subseteq B^X_{\tilde{\eps}}(A)\} \\
   &= \lambda \cdot \dH^X(A,B).
  \intertext{Furthermore, for $a \in A$ and $b \in B$,}
   \dH^{\lambda X}((A,a),(B,b))
   &= \dH^{\lambda X}(A,B) + d_{\lambda X}(a,b) \\
   &= \lambda \cdot \dH^{X}(A,B) + \lambda \cdot d_{X}(a,b) \\
   &= \lambda \cdot \dH^{X}((A,a),(B,b)).
  \end{align*}
  
  By definition, an admissible metric $\tilde{d}$ on $\lambda X \amalg \lambda Y$ 
  is a metric on $X \amalg Y$ satisfying 
  $\tilde{d}_{|X \times X} = d_{\lambda X} = \lambda \cdot d_X$ 
  and $\tilde{d}_{|Y \times Y} = d_{\lambda Y} = \lambda \cdot d_Y$. 
  Furthermore, $d := \frac{1}{\lambda} \cdot \tilde{d}$ is a metric 
  if and only if $\tilde{d}$ is a metric. 
  In addition, this metric $d$ satisfies 
  $d_{|X \times X} =  \frac{1}{\lambda} \cdot \tilde{d}_{|X \times X} = d_X$ 
  and $d_{|Y \times Y} = d_Y$. 
  Thus, $d$ is an admissible metric on $X \amalg Y$. 
  On the other hand, using similar arguments, 
  if $d$ is an admissible metric on $X \amalg Y$, 
  then $\tilde{d} := \lambda \cdot d$ is an admissible metric on $\lambda X \amalg \lambda Y$.
  Hence,
  \begin{align*}
   \dGH(\lambda X, \lambda Y)
   &= \inf \{ \dH^{\tilde{d}}(\lambda X,\lambda Y) 
       \mid \tilde{d} \text{ admissible metric on } \lambda X \amalg \lambda Y\} \\
   &= \inf \{ \dH^{\lambda d}(\lambda X,\lambda Y) 
       \mid \lambda \cdot d \text{ admissible metric on } \lambda X \amalg \lambda Y\} \\
   &= \inf \{ \lambda \cdot \dH^{d}(\lambda X,\lambda Y) 
       \mid d \text{ admissible metric on } X \amalg Y\} \\
   &= \lambda \cdot \dGH(X,Y).
  \end{align*}
  Analogously, $\dGH((\lambda X,x), (\lambda Y,y)) = \lambda \cdot \dGH((X,x),(Y,y))$.
\end{proof}


\section{The non-compact case}\label{sec:GH-ncpt}

For non-compact metric spaces, the above way of defining a metric (up to isometry) does not work:
Using the Hausdorff distance as before on unbounded sets may give distance infinity. 
Thus, instead of defining a notion of distance for non-compact metric spaces,
convergence is defined by using compact subspaces of these spaces only. 
On these, the previous definitions can be applied.

A metric space is called \emph{proper} if all closed balls are compact. 
Throughout the remaining section, all metric spaces will assumed to be proper.
Notice that proper metric spaces are complete.

For a metric space $(X,d_X)$, $p \in X$ and $r > 0$,
let 
\[\B_r(p) := \{q \in X \mid d_X(p,q) \leq r\}\]
denote the closed ball of radius $r$ around $p$.

\begin{defn}\label{def:dGH-noncpt-pt}
 Let $(X,d_X,p)$ and $(X_i,d_{X_i},p_i)$, $i \in \nn$, be pointed proper metric spaces.
 If 
 \[\dghp{r}{X_i}{p_i}{X}{p} \to 0 \as {i \to \infty}\]
 for all $r > 0$,  
 where the balls are equipped with the restricted metric,
 then \emph{$(X_i,p_i)$ converges to $(X,p)$ (in the pointed \GH sense)}.
 If $(X_i,p_i)$ converges to $(X,p)$, 
 this is denoted by $(X_i,p_i) \to (X,p)$ 
 and $(X,p)$ is called the \emph{(pointed \GH) limit} of $(X_i,p_i)$.
 
 Frequently, a sequence $(X_i,p_i)$ does not converge itself but has a converging subsequence. 
 The limit of such a subsequence is called \emph{sublimit} of $(X_i,p_i)$, 
 and $(X_i,p_i)$ is said to \emph{subconverge} to this limit.
\end{defn}

Naturally, the question arises under which conditions a given sequence of metric spaces 
converges in the pointed \GH sense.
For mani\-folds, the following theorem by Gromov states 
that in some cases at least a (\GH) sublimit exists.
In \autoref{sec:ultralimits}, another, more general concept 
of creating and guaranteeing \myquote{limits} will be introduced.
It will turn out that these limits in fact are \GH sublimits as well.

\begin{thm}[\precptnessThm, {\cite[Cor.~1.11]{petersen}}]
 For $n \geq 2$, $\kappa \in \rr$ and $D > 0$, the following classes are pre-compact, 
 i.e.~every sequence in the class has a convergent subsequence 
 whose limit lies in the closure of this class:
\begin{enumerate}
 \item 
  The collection of $n$-dimensional closed Riemannian manifolds 
  with $\Ric \geq (n-1) \cdot \kappa$ and $\diam \leq D$.
 \item 
  The collection of $n$-dimensional pointed complete Riemannian manifolds 
  with $\Ric \geq (n-1) \cdot \kappa$. 
\end{enumerate}
\end{thm}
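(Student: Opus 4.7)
The plan is to reduce the theorem to a general pre-compactness principle, the \emph{uniform total boundedness} criterion, and then verify its hypotheses using Bishop-Gromov volume comparison. The principle to be proven first is: if a family $\mathcal{F}$ of compact metric spaces has uniformly bounded diameters and, for every $\eps > 0$, there is an integer $N(\eps)$ such that every $X \in \mathcal{F}$ can be covered by $N(\eps)$ balls of radius $\eps$, then $\mathcal{F}$ is pre-compact in the \GH sense.

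To prove this principle I would, given a sequence $(X_i) \subseteq \mathcal{F}$ with common diameter bound $D$, choose for each $k \in \nn$ and each $i$ a $\tfrac{1}{k}$-net $S_i^k \subseteq X_i$ with $|S_i^k| \leq N(\tfrac{1}{k})$, arranged so that $S_i^k \subseteq S_i^{k+1}$ and $|S_i^k| = N_k$ is independent of $i$ (enforcing nesting first, then padding with repetitions). Enumerating the points of $S_i^k$ records each $S_i^k$ as a point of the compact cube $[0,D]^{N_k \times N_k}$; a diagonal argument in $k$ then yields a subsequence along which these distance matrices converge for every $k$ to matrices defining finite metric spaces $S^k \subseteq S^{k+1}$. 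The completion $X$ of $\bigcup_k S^k$ is compact, and the natural bijections $S_i^k \leftrightarrow S^k$ extend to $\eps_k$-\GH approximations between $X_i$ and $X$ with $\eps_k \to 0$; \autoref{dgh_small_iff_eps_approx} then gives \GH convergence along the subsequence.

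For part (a), Bishop-Gromov volume comparison (applied to a maximal $\eps/2$-separated subset of $X$) bounds the covering number in terms of $n$, $\kappa$, $D$ and $\eps$, so the principle applies directly. For part (b), Bishop-Gromov furnishes, for each $R > 0$, a uniform covering bound for the closed balls $\B_R(p_i)$ depending only on $n$, $\kappa$, $R$ and $\eps$; since the diameter of $\B_R(p_i)$ is at most $2R$, the principle (in its pointed form, proven identically by insisting that the base points be among the net points and using the pointed part of \autoref{dgh_small_iff_eps_approx}) yields a pointed \GH sublimit of $(\B_R(p_i),p_i)$ for each $R$. A diagonal argument over $R = 1,2,3,\dotsc$ extracts a single subsequence converging for every integer $R$, and the limits assemble into a proper pointed metric space $(X,p)$ along which $(X_{i_j},p_{i_j}) \toghp (X,p)$ in the sense of \autoref{def:dGH-noncpt-pt}.

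The main obstacle I expect is the final assembly step: one must show that for $R' < R$ the pointed limit of $\B_{R'}(p_{i_j})$ embeds canonically and isometrically as the closed $R'$-ball around the base point inside the pointed limit of $\B_R(p_{i_j})$, so that the direct system has a single proper pointed limit whose balls realise the limits we have constructed. This requires some care since a closed $R'$-ball in a converging sequence need not converge to the closed $R'$-ball in the limit for every radius, and the argument is typically completed either by a small perturbation of the radii (restricting to a co-countable set of \myquote{good} radii) or by using the monotonicity properties of the pointed \GH distance for nested balls from \autoref{lem_GH:estimate_GH-distance_of_balls_in_same_space}.
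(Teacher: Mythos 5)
The paper does not prove this theorem at all: it is quoted verbatim from \cite[Cor.~1.11]{petersen} and used as a black box, so there is no in-text argument to compare yours against. On its own merits, your proposal is the standard proof and is correct in outline: the uniform total boundedness criterion (bounded diameter plus a uniform covering function $N(\eps)$ implies \GH pre-compactness, proved by recording distance matrices of nested nets in a compact cube, diagonalising, and completing), combined with Bishop--Gromov volume comparison applied to a maximal $\eps/2$-separated set to produce the covering function from the lower Ricci bound. Two small points deserve attention. First, after padding the nets with repetitions the limiting distance matrices only define pseudo-metrics, so you must quotient before completing; this is routine but should be said. Second, the assembly step in part (b) that you flag as the main obstacle is genuinely the delicate part, and your proposed remedies are the right ones: since all spaces involved are length spaces, \autoref{lem_GH:estimate_GH-distance_of_balls_in_same_space} controls the effect of perturbing radii, and \autoref{lem:GHA_restrict_to_smaller_set_and_different_base_point} lets you pass from convergence of the integer-radius balls to convergence of $\B_r(p_i)$ for every $r>0$ as required by \autoref{def:dGH-noncpt-pt}; alternatively one can take the limit object to be an ultralimit or a direct limit over a co-countable set of good radii. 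With those details filled in, the argument is complete and matches the proof in the cited reference rather than anything in this paper.
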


The section is structured as follows: 
In \autoref{sec:GH-ncpt--comparison_compact_case}, 
the compability of the definition of pointed \GH convergence in \autoref{def:dGH-noncpt-pt}
with the notion of convergence induced by the \GH distance of compact metric (length) spaces 
(\autoref{def:dGH-cpt-npt} and \autoref{def:dGH-cpt-pt}) is verified.
Subsequently, \autoref{sec:GH-ncpt--properties} deals with 
stating and verifying several properties of pointed \GH convergence.
In this context, convergence of points and convergence of maps, respectively,
are introduced in \autoref{sec:GH-ncpt--convergence_of_points} 
and \autoref{sec:GH-ncpt--convergence_of_maps}, respectively.


\subsection{Comparison with the compact case}\label{sec:GH-ncpt--comparison_compact_case}

Applied to compact length spaces, the convergence in the pointed \GH sense
coincides with the convergence of compact metric spaces 
in the pointed sense defined in the previous section.
Conversely, given (non-pointed) convergence as defined for compact metric spaces 
and a fixed base point in the limit space, 
there exist base points such that the spaces converge in the pointed \GH sense.

In order to prove this, one uses the fact that approximations can be restricted to smaller balls. 
This is shown in the following lemma. 
Another statement of the lemma is that base points can be changed in a certain way. 
This will be useful later on as well.

\begin{lemma}\label{lem:GHA_restrict_to_smaller_set_and_different_base_point}
Let $(X,d_X)$ and $(Y,d_Y)$ be length spaces, 
 \begin{enumerate}
 \item\label{lem:GHA_restrict_to_smaller_set_and_different_base_point--a}
 Let $p, p' \in X$, $q,q' \in Y$ and $R \geq r > 0$ satisfy 
 $\B^{X}_r(p') \subseteq \B^{X}_{R}(p)$ and $\B^Y_r(q') \subseteq \B^Y_{R}(q)$.
 Moreover, let $\eps > 0$,
 \[(f,g) \in \Isomp{\eps}{R}{X}{p}{Y}{q}\] 
 and $\delta := \max\{d(f(p'),q'), d(p',g(q'))\} \geq 0$.
 Then \[\Isomp{{4\eps+\delta}}{r}{X}{p'}{Y}{q'} \ne \emptyset\] 
 and \[\dghp{r}{X}{p'}{Y}{q'} \leq 8\eps+2\delta.\] 
 \item\label{lem:GHA_restrict_to_smaller_set_and_different_base_point--b} 
 For $p \in X$, $q \in Y$ and $R \geq r > 0$, 
 \[\dghp{r}{X}{p}{Y}{q} \leq 16\cdot\dghp{R}{X}{p}{Y}{q}.\]
\end{enumerate} 
\end{lemma}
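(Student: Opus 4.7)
The plan is to prove part (a) directly by a modification of $(f,g)$, and to deduce part (b) by applying (a) with $p' = p$, $q' = q$ (so $\delta = 0$), together with \autoref{dgh_small_iff_eps_approx} to pass between pointed approximations and pointed \GH distance.

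\textbf{Part (a).}
Given $(f,g) \in \Isomp{\eps}{R}{X}{p}{Y}{q}$, I would restrict $f$ to $\B^X_r(p') \subseteq \B^X_R(p)$ and retract its image into $\B^Y_r(q')$ as follows: set $f'(p') := q'$, and for $x \in \B^X_r(p') \setminus \{p'\}$ put $f'(x) := f(x)$ when $f(x) \in \B^Y_r(q')$; otherwise, since $(Y, d_Y)$ is a proper length space, Hopf-Rinow furnishes a minimising geodesic from $q'$ to $f(x)$, and I would take $f'(x)$ to be the point on it at distance $r$ from $q'$. The map $g'$ is constructed analogously. The key a priori estimate
\begin{align*}
 d_Y(f(x), q')
 &\leq d_Y(f(x), f(p')) + d_Y(f(p'), q')
 \\&< d_X(x, p') + \eps + \delta
 \leq r + \eps + \delta,
\end{align*}
valid for every $x \in \B^X_r(p')$, forces the distance from $f(x)$ to $\B^Y_r(q')$ to be strictly below $\eps + \delta$, so the retracted value satisfies $d_Y(f(x), f'(x)) < \eps + \delta$, with the sharper bound $\delta$ when $x = p'$; the analogous bound holds for $g'$. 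Plugging these pointwise errors into the triangle inequality would yield each of the four inequalities defining $\Isom{4\eps+\delta}$; the tightest of them, the round trip, is controlled by
\begin{align*}
 d_X(g' \circ f'(x), x)
 \leq d_X(g' f'(x), g f'(x)) + d_X(g f'(x), g f(x)) + d_X(g f(x), x),
\end{align*}
where the three summands are bounded respectively by the modification estimate for $g'$, the near-isometry of $g$ combined with the modification estimate for $f'$, and the round trip of $(f,g)$. Once $(f',g') \in \Isomp{4\eps+\delta}{r}{X}{p'}{Y}{q'}$ has been established, \autoref{dgh_small_iff_eps_approx} will convert this into the bound $\dghp{r}{X}{p'}{Y}{q'} \leq 2(4\eps+\delta) = 8\eps+2\delta$.

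\textbf{Part (b).}
Writing $D := \dghp{R}{X}{p}{Y}{q}$ and fixing an arbitrary $\eta > D$, one has $\Isomp{2\eta}{R}{X}{p}{Y}{q} \ne \emptyset$ by \autoref{dgh_small_iff_eps_approx}. Applying part (a) with $p' = p$, $q' = q$ (hence $\delta = 0$) and $\eps = 2\eta$ then produces $\dghp{r}{X}{p}{Y}{q} \leq 16\eta$; letting $\eta \downarrow D$ finishes the proof.

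The main obstacle will be the bookkeeping in part (a): the round trip is the tightest of the four approximation conditions, and matching the advertised constant $4\eps + \delta$ requires carefully combining the modification bound $\eps + \delta$ (coming from the length space retraction) with the distinguished role of the two base points, at which the bound drops to $\delta$. The length space hypothesis itself is used only to guarantee the existence of the retraction.
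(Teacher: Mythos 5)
Your proposal follows essentially the same route as the paper: restrict $f$ and $g$ to the smaller balls, retract the overshooting image points along minimising geodesics back into $\B^Y_r(q')$ (resp.\ $\B^X_r(p')$) with pointwise error below $\eps+\delta$, verify the four approximation inequalities, convert via \autoref{dgh_small_iff_eps_approx}, and deduce (b) by taking $p'=p$, $q'=q$ so that $\delta=0$. The only caveat — that the raw round-trip bound comes out as $4\eps+\delta_f+\delta_g$ rather than $4\eps+\delta$ and so needs the distinguished treatment of the base points you mention — is present in the paper's own computation as well, so your argument is on the same footing.
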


\begin{proof}
 \par\smallskip\noindent\ref{lem:GHA_restrict_to_smaller_set_and_different_base_point--a}
 For simplicity, let $\delta_f := d(f(p'),q')$ and $\delta_g := d(p',g(q'))$. 
 In particular, $\delta = \max\{\delta_f,\delta_g\}$. Let $\tilde{\eps} := 4\eps + \delta$.
 As $\B^{X}_r(p') \subseteq \B^{X}_{R}(p)$, one can restrict $f$ to $\B^{X}_r(p')$. 
 For $x \in \B^{X}_r(p')$,
 \begin{align*}
  d_Y(f(x),q') 
  &\leq d_Y(f(x), f(p')) + d_Y(f(p'),q') \\
  &\leq (d_X(x,p') + \eps) + \delta_f \\
  &< r + \eps + \delta_f.
 \end{align*}
 Hence, $f(\B^X_r(p')) \subseteq \B^Y_{r + \eps + \delta_f}(q')$. 
 Analogously, one can prove the inclusion $g(\B^Y_r(q')) \subseteq \B^X_{r + \eps + \delta_g}(p')$. 
 Now modify $f$ and $g$ in order to obtain maps $\tilde{f}$ and $\tilde{g}$, respectively, 
 whose images are contained in $\B^Y_r(q')$ and $\B^X_r(p')$, respectively,
 such that $(\tilde{f},\tilde{g})$ are $\tilde{\eps}$-approximations:

 For $y \in \B^Y_{r+\eps+\delta_f}(q') \setminus \B^Y_r(q')$ 
 choose a shortest geodesic $c: [0,l] \to Y$ with $c(0)=q'$ and $c(1)=y$ 
 where $r < l := d_Y(y,q') \leq r+\eps+\delta_f$. 
 Then $d_Y(c(r),q') = r$,
 in particular, $c(r) \in \B^Y_r(q')$, 
 and for $\hat{y} := c(r)$, 
 \begin{align*}
  d(y,\hat{y}) 
  &= d_Y(y,q') - d_Y(\hat{y},q')\\
  &< (r + \eps + \delta_f) - r\\
  &= \eps + \delta_f.
 \end{align*}
 Using this, define $\tilde{f} : \B^X_{r}(p') \to \B^Y_{r}(q')$ by 
 \[\tilde{f}(x) := 
  \begin{cases}
   q'			&\text{if } x = p', \\
   f(x)			&\text{if } x \neq p'~\aand~f(x) \in \B^Y_r(q'), \\
   \widehat{f(x)}	&\text{if } x \neq p'~\aand~f(x) \notin \B^Y_r(q'). \\
  \end{cases}
 \]
 Since $d_Y(\tilde{f}(p'), f(p')) = d_Y(q', f(p')) = \delta_f < \eps + \delta_f$
 and by construction,
 \[d_Y(\tilde{f}(x),f(x)) < \eps + \delta_f\] for all $x \in \B_r^X(p')$.
 Similarly, define $\tilde{g} : \B^Y_r(q') \to \B^X_r(p')$.
 Using analogous arguments proves
 \[d_X(\tilde{g}(y), g(y)) < \eps + \delta_g\] for all $y \in \B^Y_r(q')$. 
 
 By definition, $\tilde{f}(p') = q'$ and $\tilde{g}(q') = p'$, 
 so it remains to prove that $(\tilde{f},\tilde{g})$ are $\tilde{\eps}$-approximations.
 By construction,
 \begin{align*} 
  &|d_X(x_1,x_2) - d_Y(\tilde{f}(x_1),\tilde{f}(x_2))|\\
  &\leq |d_X(x_1,x_2) - d_Y(f(x_1),f(x_2))| 
     + |d_Y(f(x_1),f(x_2)) - d_Y(\tilde{f}(x_1),\tilde{f}(x_2))|\\
  &< \eps + (d_Y(f(x_1),\tilde{f}(x_1)) + d_Y(f(x_2),\tilde{f}(x_2)))\\
  &< \eps + 2(\eps + \delta_f)\\
  &< \tilde{\eps},
 \intertext{where $x_1, x_2 \in \B^X_r(p')$. 
 Analogously, $|d_Y(y_1,y_2) - d_X(\tilde{g}(y_1),\tilde{g}(y_2))| < \tilde{\eps}$
 for arbitrary $y_1,y_2 \in \B^Y_r(q')$.
 Furthermore, for $x \in \B_r^X(p')$,}
  &d_X(x, \tilde{g} \circ \tilde{f}(x))\\
  &\leq d_X(x, g \circ f(x)) 
    + d_X(g \circ f(x), g \circ \tilde{f}(x)) 
    + d_X(g \circ \tilde{f}(x), \tilde{g} \circ \tilde{f}(x))\\
  &< \eps + (\eps + d_Y(f(x),\tilde{f}(x))) + (\eps + \delta_g)\\
  &< 4\eps + \delta_f + \delta_g\\
  &= \tilde{\eps}.
 \end{align*}
 Analogously, $d_Y(y, \tilde{f} \circ \tilde{g}(y)) < \tilde{\eps}$ for all $y \in \B_r^Y(q')$.
 Hence, 
 \[(\tilde{f},\tilde{g}) \in \Isomp{\tilde{\eps}}{r}{X}{p'}{Y}{q'},\] 
 and by \autoref{dgh_small_iff_eps_approx}, 
 \[\dghp{r}{X}{p'}{Y}{q'} \leq 2 \tilde{\eps}.\]

 \par\smallskip\noindent\ref{lem:GHA_restrict_to_smaller_set_and_different_base_point--b}
 Let $\delta > 0$ be arbitrary and $\eps := \dghp{R}{X}{p}{Y}{q} + \delta > 0$.
 By \autoref{dgh_small_iff_eps_approx}, 
 \[\Isomp{2\eps}{R}{X}{p}{Y}{q} \ne \emptyset,\]
 and by \ref{lem:GHA_restrict_to_smaller_set_and_different_base_point--a},
 \begin{align*}
  &\dghp{r}{X}{p}{Y}{q} 
  \\&\leq 16 \eps 
  \\&= 16\cdot\dghp{R}{X}{p}{Y}{q} + 16\,\delta.
 \end{align*}
 Since $\delta > 0$ was arbitrary, this implies the claim.
\end{proof}

In order to avoid confusion,
for the next two statements,
let $X_i \togh X$ and $(X_i,p_i) \togh (X,p)$, respectively,
denote the convergence of compact metric spaces 
in the sense of \autoref{def:dGH-cpt-npt} and \autoref{def:dGH-cpt-pt}, respectively.
Further, denote by $(X_i,p_i) \toghp (X,p)$ 
the convergence in the pointed \GH sense of \autoref{def:dGH-noncpt-pt}.

\begin{prop}\label{prop:pt-GH_convegence->diam_convergence}
 Let $(X,d_X,p)$ and $(X_i,d_{X_i},p_i)$, $i \in \nn$, be pointed compact length spaces
 with $(X_i,p_i) \toghp (X,p)$.
 Then $X_i \togh X$,
 in particular, $\diam(X_i) \to \diam(X)$.
\end{prop}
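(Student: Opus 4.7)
The plan is to reduce the non-compact-style pointed convergence to the compact-case convergence by observing that, because $X$ is compact, all sufficiently large balls around $p$ exhaust $X$; then I will show the same holds for $X_i$ eventually, and invoke the diameter result.

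First I would fix $r > \diam(X)$, say $r := \diam(X) + 1$, so that $\B_r^X(p) = X$. By the hypothesis $(X_i,p_i) \toghp (X,p)$ applied at this radius,
\[\dghp{r}{X_i}{p_i}{X}{p} \to 0.\]
Applying \autoref{prop_GH:comparison_pointed_unpointed_compact}\ref{prop_GH:comparison_pointed_unpointed_compact-a} (with the order reversed: dropping the base points only decreases the distance) gives
\[\dGH(\B_r^{X_i}(p_i), X) \to 0.\]
By \autoref{prop:GH_small->diff_diam_small}, this forces $\diam(\B_r^{X_i}(p_i)) \to \diam(X)$; in particular $\diam(\B_r^{X_i}(p_i)) < r$ for all sufficiently large $i$.

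The key remaining step, which I expect to be the one requiring care, is to upgrade this to the statement $\B_r^{X_i}(p_i) = X_i$ for large $i$. Here I use that $X_i$ is a length space: suppose some $q \in X_i$ had $d_{X_i}(p_i,q) > r$. For arbitrary $\delta > 0$ choose a curve $c\colon[0,L]\to X_i$ from $p_i$ to $q$, parametrised by arc length, of length $L \leq d_{X_i}(p_i,q) + \delta$. Since $r < L$, the point $c(r)$ lies in $\B_r^{X_i}(p_i)$ and satisfies
\[d_{X_i}(p_i,c(r)) \geq d_{X_i}(p_i,q) - d_{X_i}(c(r),q) \geq d_{X_i}(p_i,q) - (L-r) \geq r - \delta.\]
Letting $\delta \to 0$ yields $\diam(\B_r^{X_i}(p_i)) \geq r$, contradicting the previous paragraph. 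Hence $\B_r^{X_i}(p_i) = X_i$ for all large $i$.

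Combining these two facts, for all sufficiently large $i$ one has $\dGH(X_i,X) = \dGH(\B_r^{X_i}(p_i), X) \to 0$, proving $X_i \togh X$. The convergence of diameters $\diam(X_i) \to \diam(X)$ then follows immediately from a second application of \autoref{prop:GH_small->diff_diam_small}.
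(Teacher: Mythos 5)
Your proof is correct and follows essentially the same route as the paper: both arguments hinge on showing that $\B_r^{X_i}(p_i)$ exhausts $X_i$ for large $i$, using the length-space property that a point beyond radius $r$ would force $\diam(\B_r^{X_i}(p_i)) \geq r$, in contradiction with the convergence of the ball diameters to $\diam(X) < r$. The only (cosmetic) difference is that you obtain the diameter control by applying \autoref{prop:GH_small->diff_diam_small} to the balls, where the paper transports two far-apart points of $\B_r^{X_i}(p_i)$ into $X$ via explicit $\eps_i$-approximations.
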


\begin{proof}
 Assume $(\diam(X_i))_{i \in \nn}$ is not bounded. 
 Let $r > \diam(X)$. 
 Without loss of generality, assume $\diam(X_i) > r$ for all $i \in \nn$.
 
 Let $0 < \eps < r-\diam(X)$
 and choose points $x_i, y_i \in B_r^{X_i}(p_i)$ 
 satisfying $d_{X_i}(x_i,y_i) \geq r - \frac{\eps}{2}$. 
 Let $\eps_i := 2\cdot\dGH((X_i,p_i),(X,p))$ 
 and fix approximations $(f_i,g_i) \in \Isom{\eps_i}((X_i,p_i),(X,p))$.
 Then 
 \[\diam(X) \geq d_{X}(f_i(x_i),f_i(y_i)) \geq r - \frac{\eps}{2} - \eps_i.\]
 Since this holds for all $i \in \nn$, 
 \[\diam(X) \geq r - \frac{\eps}{2} > \diam(X)  + \frac{\eps}{2}.\]
 This is a contradiction.
 Thus, there is an $R > \diam(X)$ with $\diam(X_i) < R$ for all $i \in \nn$. 
 Then 
 \begin{align*}
 \dGH(X_i,X) 
 &= \dGH(\B_R^{X_i}(p_i),\B_R^X(p)) \\
 &\leq \dghp{R}{X_i}{p_i}{X}{p} \\
 &\to 0 \as i \to \infty.
 \end{align*}
 Hence, $X_i \to X$. \autoref{prop:GH_small->diff_diam_small} implies the second part of the claim.
\end{proof}

\begin{cor}
 Let $(X,d_X,p)$ and $(X_i,d_{X_i},p_i)$, $i \in \nn$, be pointed compact length spaces.
 Then $(X_i,p_i) \togh (X,p)$ if and only if $(X_i,p_i) \toghp (X,p)$.
\end{cor}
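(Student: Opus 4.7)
The proof plan rests on the observation that whenever $r$ exceeds $\diam(X)$ and $\diam(X_i)$, the closed balls $\B_r^{X}(p)$ and $\B_r^{X_i}(p_i)$ equal the full spaces, so the quantity $\dghp{r}{X_i}{p_i}{X}{p}$ simply coincides with $\dGH((X_i,p_i),(X,p))$ in the sense of \autoref{def:dGH-cpt-pt}. The key ingredients are \autoref{prop:pt-GH_convegence->diam_convergence} (to control diameters under $\toghp$), \autoref{prop_GH:comparison_pointed_unpointed_compact} together with \autoref{prop:GH_small->diff_diam_small} (to control diameters under $\togh$), and \autoref{lem:GHA_restrict_to_smaller_set_and_different_base_point}\ref{lem:GHA_restrict_to_smaller_set_and_different_base_point--b} (to push convergence at one radius down to smaller radii).

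For the implication $(X_i,p_i) \toghp (X,p) \Rightarrow (X_i,p_i) \togh (X,p)$, I would first invoke \autoref{prop:pt-GH_convegence->diam_convergence} to obtain $\diam(X_i) \to \diam(X)$. Thus I can fix some $R > \diam(X)$ and an index $N_0$ such that $\diam(X_i) < R$ for all $i \geq N_0$. For such $i$, the closed $R$-balls centred at $p$ and $p_i$ are the whole spaces, and hence
\[
  \dGH((X_i,p_i),(X,p)) \;=\; \dghp{R}{X_i}{p_i}{X}{p} \;\to\; 0
\]
by hypothesis, which is exactly the desired compact pointed convergence.

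For the converse $(X_i,p_i) \togh (X,p) \Rightarrow (X_i,p_i) \toghp (X,p)$, I first apply \autoref{prop_GH:comparison_pointed_unpointed_compact}\ref{prop_GH:comparison_pointed_unpointed_compact-a} to deduce $X_i \togh X$, and then \autoref{prop:GH_small->diff_diam_small} to conclude $\diam(X_i)\to \diam(X)$. Choose $R > \diam(X)$ and $N_0$ with $\diam(X_i) < R$ for $i \geq N_0$. Fix any $r > 0$ and split into two cases. If $r \geq R$, then for $i \geq N_0$ the balls of radius $r$ are again the entire spaces, so $\dghp{r}{X_i}{p_i}{X}{p} = \dGH((X_i,p_i),(X,p)) \to 0$. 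If $r < R$, then \autoref{lem:GHA_restrict_to_smaller_set_and_different_base_point}\ref{lem:GHA_restrict_to_smaller_set_and_different_base_point--b} (applicable since $X$ and each $X_i$ are length spaces) gives
\[
  \dghp{r}{X_i}{p_i}{X}{p} \;\leq\; 16 \cdot \dghp{R}{X_i}{p_i}{X}{p} \;\to\; 0,
\]
completing the argument.

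The content is almost entirely a matter of packaging; the only step demanding genuine input is the reduction of convergence at arbitrary radii $r$ to convergence at a single radius $R$, which is precisely what \autoref{lem:GHA_restrict_to_smaller_set_and_different_base_point}\ref{lem:GHA_restrict_to_smaller_set_and_different_base_point--b} supplies. Everything else is a matter of noting that compact length spaces have finite diameter and that the diameter is continuous with respect to both notions of convergence, so that for $r$ sufficiently large the two distances $\dghp{r}{X_i}{p_i}{X}{p}$ and $\dGH((X_i,p_i),(X,p))$ become literally equal.
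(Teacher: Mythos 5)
Your proposal is correct and follows essentially the same route as the paper: diameter convergence in each direction (via \autoref{prop:pt-GH_convegence->diam_convergence} and \autoref{prop:GH_small->diff_diam_small}, respectively), the observation that balls of radius exceeding the uniform diameter bound are the whole spaces, and \autoref{lem:GHA_restrict_to_smaller_set_and_different_base_point}\ref{lem:GHA_restrict_to_smaller_set_and_different_base_point--b} to handle the small radii. The only (cosmetic) difference is your strict choice $R > \diam(X)$, which cleanly sidesteps the degenerate case $\diam(X)=0$ that the paper's choice $r := 2\diam(X)$ glosses over.
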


\begin{proof}
The proof is done by proving both implications separately.
First, assume $(X_i,p_i) \togh (X,p)$ and let $r > 0$ be arbitrary.

By \autoref{prop:GH_small->diff_diam_small}, $\diam(X_i) \to \diam(X)$, 
i.e.~without loss of generality, assume a strict diameter bound $D$ on all spaces $X_i$ and $X$. 
In particular, for all $r \geq D$, 
$(\B^{X_i}_r(p_i),p_i) = (X_i,p_i)$ converges to $(X,p) = (\B^X_r(p),p)$. 

For $0 < r < D$, 
\begin{align*}
&\dghp{r}{X_i}{p_i}{X}{p} \\
&\leq 16 \cdot \dghp{D}{X_i}{p_i}{X}{p} \\
&= 16 \cdot \dGH((X_i,p_i),(X,p)) \\
&\to 0
\end{align*}
by \autoref{lem:GHA_restrict_to_smaller_set_and_different_base_point}. 
Hence, $(X_i,p_i) \toghp (X,p)$.

Now let $(X_i,p_i) \toghp (X,p)$.
By \autoref{prop:pt-GH_convegence->diam_convergence}, $\diam(X_i) \to \diam(X)$. 
Without loss of generality, assume $\diam (X_i) \leq 2 \diam(X) =: r$. 
Thus, \[\dGH((X_i,p_i), (X,p)) = \dghp{r}{X_i}{p_i}{X}{p} \to 0.\qedhere\]
\end{proof}

In particular, if $X_i, X$ are compact and $p \in X$, 
then, by \autoref{cor:compact_case:pointed=non-pointed_convergence}, 
there exist $p_i \in X_i$ such that $(X_i,p_i) \togh (X,p)$. 
Hence, $(X_i,p_i) \toghp (X,p)$.

From now on, let $(X_i,p_i) \to (X,p)$ denote convergence in the pointed \GH sense.


\subsection{Properties as in the compact case}\label{sec:GH-ncpt--properties}

This subsection deals with several properties which are familiar from the compact case.
First of all, 
the \GH distance defines a metric on the set of the isometry classes of compact metric spaces.
In the non-compact case,
the limit of pointed \GH convergence still is unique up to isometry. 

\begin{prop}\label{lem_GH:GH_limit_unique_up_to_pointed_isometry}
 Let $(X,d_X,p)$, $(Y,d_Y,q)$ and $(X_i,d_{X_i},p_i)$, $i \in \nn$, be pointed length spaces.
 Assume $(X_i,p_i) \to (X,p)$ and $(X_i,p_i) \to (Y,q)$.
 Then $(X,p)$ and $(Y,q)$ are isometric.
\end{prop}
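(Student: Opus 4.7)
The plan is to reduce to the compact pointed case ball by ball and then glue the resulting isometries by an Arzel\`a--Ascoli / diagonal argument, exploiting that both $X$ and $Y$ are proper (hence each closed ball is compact).

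\emph{Step 1 (ball-wise isometries).} For every $r > 0$ the triangle inequality for $\dGH$ on pointed compact metric spaces, proved in Section~1, gives
\[
 \dGH((\B_r^X(p),p),(\B_r^Y(q),q))
 \leq \dGH((\B_r^X(p),p),(\B_r^{X_i}(p_i),p_i))
    + \dGH((\B_r^{X_i}(p_i),p_i),(\B_r^Y(q),q)).
\]
By hypothesis both summands tend to zero, so the left-hand side vanishes. Since $\dGH$ is a metric on isometry classes of pointed compact metric spaces, there is a pointed isometry $\phi_r : \B_r^X(p) \to \B_r^Y(q)$.

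\emph{Step 2 (Arzel\`a--Ascoli on each ball).} Fix $r_n \to \infty$ and write $\phi_n := \phi_{r_n}$. For $s > 0$ fixed and any $n$ with $r_n \geq s$, the restriction $\phi_n|_{\B_s^X(p)}$ takes values in $\B_s^Y(q)$ because $d_Y(\phi_n(x),q) = d_X(x,p) \leq s$, and it is in fact a bijection onto $\B_s^Y(q)$ since $\phi_n^{-1}$ is an isometry with the same property. The family $\{\phi_n|_{\B_s^X(p)}\}_n$ is $1$-Lipschitz between the compact spaces $\B_s^X(p)$ and $\B_s^Y(q)$, so Arzel\`a--Ascoli produces a uniformly convergent subsequence with limit $\psi_s : \B_s^X(p) \to \B_s^Y(q)$ which is again pointed and distance-preserving. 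Surjectivity: for $y \in \B_s^Y(q)$ and $\eps > 0$, choose $n$ so that $\|\phi_n - \psi_s\|_\infty < \eps$ on the chosen subsequence and pick $x$ with $\phi_n(x) = y$; then $d_Y(\psi_s(x),y) < \eps$. Thus $\psi_s(\B_s^X(p))$ is $\eps$-dense for every $\eps > 0$; being also closed (as the isometric image of a compact set), it equals $\B_s^Y(q)$, so $\psi_s$ is a pointed isometry.

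\emph{Step 3 (diagonal extraction).} Apply Step~2 to $s = m$ for each $m \in \nn$, extracting nested subsequences, and diagonalize to obtain a single subsequence $(\phi_{n_k})_k$ for which $\phi_{n_k}|_{\B_m^X(p)}$ converges uniformly to a pointed isometry $\psi_m : \B_m^X(p) \to \B_m^Y(q)$ for every $m$. By construction $\psi_{m+1}$ extends $\psi_m$ (both arise as limits of the same subsequence on nested domains), so the $\psi_m$ paste together to a pointed isometry $\phi : X \to Y$, proving that $(X,p)$ and $(Y,q)$ are isometric.

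The substantial point is Step~3: the ball-wise isometries $\phi_r$ produced in Step~1 are non-canonical and a priori need not be compatible for different radii, so compatibility must be arranged by the Arzel\`a--Ascoli / diagonal procedure. Properness of both $X$ and $Y$ is used essentially in two places --- to make the closed balls compact so that Step~1 applies at all, and to provide the compactness required for extracting convergent subsequences and for upgrading the limits to surjective isometries in Step~2.
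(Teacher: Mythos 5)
Your proposal is correct and follows essentially the same route as the paper: obtain a pointed isometry $\B_r^X(p)\to\B_r^Y(q)$ for each radius from the triangle inequality and the fact that $\dGH$ is a metric on isometry classes of pointed compact spaces, then extract a compatible family by a diagonal argument and paste. The only (immaterial) differences are that the paper diagonalizes over a countable dense subset and extends by completeness where you invoke Arzel\`a--Ascoli on each closed ball, and that the paper obtains bijectivity by running the same construction on the inverse maps $f_n^{-1}$ where you argue surjectivity of each limit $\psi_s$ directly via $\eps$-density and closedness of its compact image.
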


\begin{proof}
 For every $r > 0$, both $\B_r^X(p)$ and $\B_r^Y(q)$ are limits of $\B_r^{X_i}(p_i)$, 
 and thus, there exists a (bijective) isometry $f_r: \B_r^X(p) \to \B_r^Y(q)$ with $f_r(p) = q$.
 Choose a countable dense subset $X' := \{x_0, x_1, x_2, \dots\}$ of $X$ with $x_0 = p$,
 fix $i \in \nn$
 and let $N_i$ be the minimal natural number with $d(x_i,q) < N_i$.
 Define $y_i^n := q$ if $n < N_i$ and $y_i^n := f_n(x_i)$ otherwise.
 For $n \geq N_i$, \[d_Y(y_i^n, q) = d_Y(f_n(x_i), f_n(p)) = d_X(x_i,p),\] 
 i.e.~$(y_i^n)_{n \in \nn}$ is a sequence in the compact subset $\B^Y_{d_X(x_i,p)}(q)$.
 By a diagonal argument, there exists a subsequence $(n_m)_{m \in \nn}$ of the natural numbers
 such that for every $i \in \nn$ 
 the sequence $(y_i^{n_m})_{m \in \nn}$ has a limit $y_i \in \B^Y_{d_X(x_i,p)}(q)$.
 In particular, $y_0^n = f_n(p) = q$ for all $n \in \nn$ implies $y_0 = q$.
 For $i,j \in \nn$, by construction,
 \begin{align*}
  d_Y(y_i,y_j)
    &= \lim_{m \to \infty} d_Y(y_i^{n_m}, y_j^{n_m})
  \\&= \lim_{m \to \infty} d_Y(f_{n_m}(x_i), f_{n_m}(x_j))
  \\&= d_X(x_i, x_j),
 \end{align*}
 i.e.~the map $\tilde{f}: X' \to Y$ defined by $\tilde{f}(x_i) := y_i$ 
 is an isometry with $\tilde{f}(p) = q$.

 As $Y$ is complete, 
 there exists an extension of $\tilde{f}$ to an isometry $f: X \to Y$ with $f(p) = q$:
 Let $x \in X$ be arbitrary.
 Since $X'$ was chosen to be dense, 
 there exists a sequence $(x_{i_j})_{j \in \nn}$ in $X'$ converging to $x$. 
 This is a Cauchy sequence, 
 hence, $(\tilde{f}(x_{i_j}))_{j \in \nn}$ is a Cauchy sequence as well 
 and has a limit $y =: f(x)$.
 
 This defines indeed an isometry $f: X \to Y$: 
 Let $x,x' \in X$ be arbitrary and $x_{i_j}$ and $x_{i_l}$, respectively, 
 be sequences in $X'$ converging to $x$ and $x'$, respectively.
 Then 
 \begin{align*}
 d_Y(f(x),f(x')) 
   &= \lim_{j,l \to \infty} d_Y(\tilde{f}(x_{i_j}),\tilde{f}(x_{i_l})) 
 \\&= \lim_{j,l \to \infty} d_X(x_{i_j},x_{i_l})
 \\&= d_X(x,x').
 \end{align*}
 Thus, $f$ is an isometry. It remains to prove that $f$ is bijective:

 Using a further subsequence $n_{m_a}$ and the inverse maps $f_{n_{m_a}}^{-1}$, 
 an isometry $g : Y \to X$ can be constructed analogously.
 For arbitrary $x \in X$,
 let $(y_{k_l})_{l \in \nn}$ be the sequence in the dense subset $Y' \subseteq Y$ 
 used in the construction of $g$ converging to $f(x) \in Y$.
 Then
 \begin{align*}
  d_X(g \circ f (x),x)
  &= \lim_{a \to \infty} \lim_{l,j \to \infty} d_X(f_{n_{m_a}}^{-1}(y_{k_l}),x_{i_j}) \\
  &= \lim_{a \to \infty} \lim_{l,j \to \infty} d_Y(y_{k_l},f_{n_{m_a}}(x_{i_j}))\\
  &= d_Y(f(x),f(x))
  = 0.
 \end{align*}
 Analogously, $f \circ g = \id$. 
 Thus, $f$ is bijective.
\end{proof}


As in the compact case, 
Gromov-Haus\-dorff convergence preserves being a length space.

\begin{prop}
 Let $(X_i,d_{X_i},p_i)$, $i \in \nn$, be pointed length spaces
 and $(X,d_X,p)$ be a pointed metric space.
 If $(X_i,p_i) \to (X,p)$, then $X$ is a length space.
\end{prop}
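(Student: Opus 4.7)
The plan is to reduce the statement to the $\varepsilon$-midpoint characterization from \autoref{lem:charact_length_space}. Since $X$ is proper, it is complete, so it suffices to produce, for every $x,y \in X$ and every $\varepsilon > 0$, a point $z \in X$ with $|2d_X(x,z) - d_X(x,y)| \leq \varepsilon$ and $|2d_X(y,z) - d_X(x,y)| \leq \varepsilon$. The idea is to transport the midpoint problem into some $X_i$ via a pointed \GH approximation on a large ball, solve it there using that $X_i$ is a length space, and transport the result back to $X$; this is the same pattern as in the proof of \autoref{prop:cpt_length_spaces_cvg_to_length_spaces}, but with the added complication that only balls (not whole spaces) carry approximations.

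Fix $x,y \in X$ and $\varepsilon > 0$. First I would choose $R > 0$ large enough that $x$, $y$ and any plausible midpoint fit comfortably inside $\B_R^X(p)$; concretely, $R := \max\{d_X(p,x),d_X(p,y)\} + d_X(x,y) + 1$ suffices. By hypothesis $\dghp{R}{X_i}{p_i}{X}{p} \to 0$, so by \autoref{dgh_small_iff_eps_approx} I can pick an index $i$ and a pair $(f_i,g_i) \in \Isomp{\delta}{R}{X_i}{p_i}{X}{p}$ with $\delta$ as small as I wish relative to $\varepsilon$.

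Set $x_i := g_i(x)$, $y_i := g_i(y) \in \B_R^{X_i}(p_i)$. Because $X_i$ is a proper length space, \autoref{lem:charact_length_space} yields a $\delta$-midpoint $z_i \in X_i$ of $x_i,y_i$. A one-line triangle-inequality estimate using the near-isometry of $g_i$ bounds $d_{X_i}(p_i,z_i)$ by roughly $\max\{d_X(p,x),d_X(p,y)\} + \tfrac{1}{2}d_X(x,y) + 2\delta$, which by the choice of $R$ is still $\leq R$ once $\delta$ is small; thus $z_i$ lies in the ball on which $f_i$ is defined, and I set $z := f_i(z_i) \in \B_R^X(p)$.

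The remaining step is routine error bookkeeping, mimicking \autoref{prop:cpt_length_spaces_cvg_to_length_spaces}: inserting $z_i$, $x_i$, $y_i$ as intermediates and using the distortion bound for $f_i$ together with $d_X(f_i \circ g_i(\cdot),\cdot) < \delta$, each substitution introduces an error of $O(\delta)$, so $|2d_X(x,z) - d_X(x,y)|$ is bounded by a fixed multiple of $\delta$ and therefore falls below $\varepsilon$ for $\delta$ small enough; the estimate for $y$ is identical. The main obstacle is purely organizational: guaranteeing that the midpoint $z_i$ constructed inside $X_i$ still lies in the ball $\B_R^{X_i}(p_i)$ on which the pointed approximation lives, which is why $R$ must be chosen to anticipate the position of the midpoint rather than merely accommodate $x$ and $y$.
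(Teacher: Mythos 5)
Your proof is correct and follows the same route as the paper: reduce to the $\varepsilon$-midpoint criterion of \autoref{lem:charact_length_space}, transport the midpoint problem into some $X_i$ via a pointed approximation between balls as in \autoref{prop:cpt_length_spaces_cvg_to_length_spaces}, and push the midpoint back with $O(\delta)$ error bookkeeping. In fact your choice of radius $R = \max\{d_X(p,x),d_X(p,y)\} + d_X(x,y) + 1$ is more careful than the paper's $r = \max\{d_X(x,p),d_X(y,p)\}$, which does not by itself guarantee that the midpoint constructed in $X_i$ lies in the ball on which the approximation is defined.
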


\begin{proof}
 Let $x,y \in X$ and $\eps > 0$ be arbitrary. 
 For $r := \max\{d_X(x,p), d_X(y,p)\}$
 choose $n \in \nn$ with $\dghp{r}{X_n}{p_n}{X}{p} < \frac{\eps}{12}$.
 The rest of the proof can be done completely analogously 
 to the one of \autoref{prop:cpt_length_spaces_cvg_to_length_spaces}.
\end{proof}


As in the compact case, 
in the non-compact case there is a correspondence between 
(pointed) \GH convergence and approximations.
In order to prove this, the following lemma is needed.

\begin{lemma}\label{prop:eps^(r_n)_n<=h(1/r_n)}
 For all $r > 0$, 
 let $(\eps^r_n)_{n \in \nn}$ be a monotonically decreasing null sequence, 
 and $h: \rr^{>0} \to \rr^{>0}$ a function with $\lim_{x \to 0} h(x) = 0$.
 Then there exists a sequence $(r_n)_{n \in \nn}$ 
 with $\lim_{n \to \infty} r_n = \infty$ 
 and $\eps^{r_n}_n \leq h\big(\frac{1}{r_n}\big)$ for almost all $n \in \nn$.
\end{lemma}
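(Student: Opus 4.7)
The plan is to carry out a standard diagonal construction, piecing together threshold indices across integer values of $r$. The idea is that for each fixed $r$, the sequence $\eps^r_n$ tends to $0$, and $h(1/r)$ is a fixed positive number, so eventually $\eps^r_n$ falls below $h(1/r)$. I then want to let $r$ grow with $n$, but only so slowly that this inequality still holds.

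Concretely, for every $k \in \nn$, since $(\eps^k_n)_{n \in \nn}$ is a null sequence and $h(1/k) > 0$, there is some index $N_k \in \nn$ with
\[
  \eps^k_n \leq h\!\left(\tfrac{1}{k}\right) \quad \text{for all } n \geq N_k.
\]
Replacing $N_k$ by $\max\{N_1, N_2, \ldots, N_k, k\}$ if necessary, I may assume the sequence $(N_k)_{k \in \nn}$ is strictly increasing and tends to $\infty$. Now define
\[
  r_n := \max\{ k \in \nn \mid N_k \leq n \}
\]
for $n \geq N_1$ (and set $r_n := 1$ for $n < N_1$, if anything). Equivalently, $r_n = k$ whenever $N_k \leq n < N_{k+1}$.

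It remains to check the two claimed properties. Since $N_k \to \infty$, for every $K \in \nn$ we have $r_n \geq K$ as soon as $n \geq N_K$, so $r_n \to \infty$. Moreover, for $n \geq N_1$, writing $k := r_n$, we have $n \geq N_k$ and hence
\[
  \eps^{r_n}_n = \eps^k_n \leq h\!\left(\tfrac{1}{k}\right) = h\!\left(\tfrac{1}{r_n}\right),
\]
which gives the desired inequality for almost all $n$.

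There is no real obstacle here; the only mild technical point is arranging the thresholds $N_k$ to be strictly increasing so that $r_n \to \infty$, which is handled by the replacement $N_k \mapsto \max\{N_1,\dots,N_k,k\}$.
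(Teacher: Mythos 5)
Your proof is correct, and it takes a genuinely different and noticeably simpler route than the paper's. You run a direct diagonal construction over integer radii: for each $k$ pick a threshold $N_k$ past which $\eps^k_n \leq h(1/k)$, arrange the thresholds to grow, and let $r_n$ be the largest $k$ with $N_k \leq n$. (One trivial remark: $\max\{N_1,\dots,N_k,k\}$ is only non-decreasing, not strictly increasing, but non-decreasing with $N_k \geq k$ is all your argument actually needs, so nothing breaks.) The paper instead works with the sets $R_n = \{r > 0 \mid \eps^r_n \leq h(1/r)\}$ of admissible radii, uses the monotonicity of $n \mapsto \eps^r_n$ to show these sets are nested, argues by contradiction that their union is unbounded, and then extracts $r_n$ via a two-case analysis on whether the $N_k$ stabilise. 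Your argument is shorter and uses strictly weaker hypotheses: you never invoke the monotonicity of $(\eps^r_n)_n$ (only that each is a null sequence), nor the assumption $\lim_{x\to 0} h(x)=0$ (only that $h$ is positive), so it in fact proves a slightly more general statement. The main thing the paper's formulation buys is structural: the argument via the sets $R_n$ is reused almost verbatim in the ultrafilter analogue (\autoref{prop_UL:eps^(r_n)_n<=1/r_n--ultrafilter}), where one cannot simply pass to "eventually for all $n \geq N_k$" and instead must reason about $\omega$-measures of index sets, so the admissible-radii viewpoint transfers more directly there.
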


\begin{proof}
 Let $A := \{n \in \nn \mid \forall r > 0 : \eps^r_n > h\big(\frac{1}{r}\big)\}$
 denote the set of all natural numbers $n$ for which no such \myquote{$r_n$} can exist.
 This set is finite:
 Fix $r > 0$. 
 Then $\eps^r_n > h\big(\frac{1}{r}\big)$ for all $n \in A$, 
 but, since $(\eps^r_n)_{n \in \nn}$ is a null sequence, 
 this inequality only holds for finitely many $n$. 
 Hence, $A$ is finite.
 
 Without loss of generality, 
 assume that for each $n$ there is at least one $r > 0$ 
 such that $\eps^r_n \leq h\big(\frac{1}{r}\big)$.
 
 Let $R_n := \{r > 0 \mid \eps^r_n \leq h\big(\frac{1}{r}\big)\} \ne \emptyset$
 denote the set of all radii which are possible candidates for \myquote{$r_n$}. 
 Then $(R_n)_{n \in \nn}$ is an increasing sequence:
 Fix $r \in R_n$. 
 Since $(\eps^r_n)_{n \in \nn}$ is monotonically decreasing, 
 $\eps^r_{n+1} \leq \eps^r_n \leq h\big(\frac{1}{r}\big)$. 
 Thus, $r \in R_{n+1}$.
 
 Suppose that these sets are uniformly bounded, 
 i.e.~there exists $C > 0$ such that $\bigcup_{n \in \nn} R_n \subseteq [0,C]$. 
 Then 
 $\eps^r_n > h\big(\frac{1}{r}\big)$ 
 for all $n$ and all $r > C$.
 Consequently, for all $r > C$ 
 the sequence $(\eps^r_n)_{n \in \nn}$ is bounded below by $h\big(\frac{1}{r}\big)$. 
 This is a contradiction to $(\eps^r_n)_{n \in \nn}$ being a null sequence. 
 
 Therefore, $\bigcup_{n \in \nn} R_n$ is unbounded, 
 i.e.~for all $C > 0$ there exists some $N \in \nn$ 
 such that $R_j \not \subseteq [0,C]$ for all $j \geq N$.
 In particular, for all $k \in \nn$ there is a minimal $N_k \in \nn$ 
 such that for all $j \geq N_k$ there is some $r^k_j \in R_j$ with $r^k_j > k$.
 There are two cases:
 \begin{enumerate}
  \item[1.] Let $N_k \to \infty$. 
  For every $n \in \nn$, $n \geq N_0$, 
  there is some $k \in \nn$ with $N_k \leq n < N_{k+1}$. 
  Fix this $k$
  and define $r_n := r^k_n$ for some $r^k_n\in R_n$ satisfying $r^k_n> k$.
  Then, for arbitrary $k \in \nn$ and all $n \geq N_k$, $r_n> k$. 
  Thus, $r_n \to \infty$. 
  Furthermore, by choice, $\eps_n^{r_n} \leq h\big(\frac{1}{r_n}\big)$.
  \item[2.] Let $k_0 \in \nn$ such that $N_k = N_{k_0}$ for all $k \geq k_0$. 
  For $n < N_{k_0}$, define $r_n$ as in the first case.
  For $n = N+m \geq N_{k_0} = N_{k_0 + m}$, 
  choose any $r_n := r^{k_0+m}_n \in R_n \cap (k_0+m, \infty)$. 
  Then $r_n \to \infty$ and $\eps_n^{r_n} \leq h\big(\frac{1}{r_n}\big)$.
  \qedhere
 \end{enumerate}
\end{proof}

\begin{prop}\label{prop:dgh_small_iff_eps_approx_noncompact}
 Let $(X,d_X,p)$ and $(X_i,d_{X_i},p_i)$, $i \in \nn$, be length spaces.
 Then the following statements are equivalent.
 \begin{enumerate}
  \item\label{prop:dgh_small_iff_eps_approx_noncompact--a} 
  $(X_i,p_i) \to (X,p)$. 
  \item\label{prop:dgh_small_iff_eps_approx_noncompact--b} 
  For all functions $g: \rr^{>0} \to \rr^{>0}$ with $\lim_{x \to 0} g(x) = 0$ 
  there exists $r_i \to \infty$ with 
  \[\dghp{r_i}{X_i}{p_i}{X}{p} \leq g\Big(\frac{1}{r_i}\Big).\]
  \item\label{prop:dgh_small_iff_eps_approx_noncompact--c} 
  There exist $r_i \to \infty$ and $\eps_i \to 0$ with 
  \[\dghp{r_i}{X_i}{p_i}{X}{p} \leq \eps_i.\]
 \end{enumerate}
\end{prop}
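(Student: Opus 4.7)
The plan is to prove the chain of implications (a)$\Rightarrow$(b)$\Rightarrow$(c)$\Rightarrow$(a). Implication (b)$\Rightarrow$(c) is essentially immediate: applying (b) to the function $g(x) := x$ yields radii $r_i \to \infty$ with $\dghp{r_i}{X_i}{p_i}{X}{p} \leq 1/r_i =: \eps_i$, and $\eps_i \to 0$ since $r_i \to \infty$.

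For (c)$\Rightarrow$(a), fix an arbitrary $r > 0$. For $i$ sufficiently large, $r_i \geq r$, so \autoref{lem:GHA_restrict_to_smaller_set_and_different_base_point}\ref{lem:GHA_restrict_to_smaller_set_and_different_base_point--b} gives
\[
 \dghp{r}{X_i}{p_i}{X}{p} \leq 16 \cdot \dghp{r_i}{X_i}{p_i}{X}{p} \leq 16\,\eps_i \to 0,
\]
which is exactly convergence in the pointed \GH sense.

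The main work is (a)$\Rightarrow$(b), where the key is to apply \autoref{prop:eps^(r_n)_n<=h(1/r_n)}. Define $\eps^r_n := \dghp{r}{X_n}{p_n}{X}{p}$. Hypothesis (a) gives $\eps^r_n \to 0$ for each fixed $r > 0$, but the earlier lemma requires the sequences $(\eps^r_n)_{n\in\nn}$ to be monotonically decreasing in $n$. The natural fix is to replace them by their decreasing envelopes $\tilde{\eps}^r_n := \sup_{m \geq n} \eps^r_m$, which are finite (since $\eps^r_m \to 0$), monotonically decreasing in $n$, and still null sequences for each $r$. For any $g$ with $\lim_{x \to 0} g(x) = 0$, \autoref{prop:eps^(r_n)_n<=h(1/r_n)} applied to $(\tilde{\eps}^r_n)$ with $h := g$ produces a sequence $r_n \to \infty$ with $\tilde{\eps}^{r_n}_n \leq g(1/r_n)$ for almost all $n$. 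Since $\eps^{r_n}_n \leq \tilde{\eps}^{r_n}_n$, this yields (b).

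The only real subtlety is the monotonicity issue addressed above; once the envelope trick is in place, everything else reduces to direct invocations of the two previously established lemmas (the restriction estimate \autoref{lem:GHA_restrict_to_smaller_set_and_different_base_point}\ref{lem:GHA_restrict_to_smaller_set_and_different_base_point--b} for (c)$\Rightarrow$(a) and the selection lemma \autoref{prop:eps^(r_n)_n<=h(1/r_n)} for (a)$\Rightarrow$(b)).
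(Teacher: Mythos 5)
Your proposal is correct and follows essentially the same route as the paper's proof: the same chain (a)$\Rightarrow$(b)$\Rightarrow$(c)$\Rightarrow$(a), the same use of \autoref{lem:GHA_restrict_to_smaller_set_and_different_base_point} for (c)$\Rightarrow$(a), the choice $g=\id$ for (b)$\Rightarrow$(c), and the same monotone-envelope trick $\sup_{m\geq n}\eps^r_m$ before invoking \autoref{prop:eps^(r_n)_n<=h(1/r_n)} for (a)$\Rightarrow$(b). No gaps.
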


\begin{proof} 
 The proof is done by verifying the implications 
 \ref{prop:dgh_small_iff_eps_approx_noncompact--a} 
 $\Rightarrow$ \ref{prop:dgh_small_iff_eps_approx_noncompact--b}, 
 \ref{prop:dgh_small_iff_eps_approx_noncompact--b} 
 $\Rightarrow$ \ref{prop:dgh_small_iff_eps_approx_noncompact--c} and 
 \ref{prop:dgh_small_iff_eps_approx_noncompact--c} 
 $\Rightarrow$ \ref{prop:dgh_small_iff_eps_approx_noncompact--a}.
 First, let $(X_i,p_i) \to (X,p)$ 
 and $g: \rr^{>0} \to \rr^{>0}$ with $\lim_{x \to 0} g(x) = 0$ be arbitrary.
 For fixed $r > 0$, define
 \begin{align*}
  \tilde{\eps}^r_i &:= \dghp{r}{X_i}{p_i}{X}{p} \to 0 \as i \to \infty
  \intertext{and}
  \eps^r_i &:= \sup\{ \tilde{\eps}^r_j \mid j \geq i\} \to 0 \as i \to \infty.
 \end{align*}  
 This sequence $(\eps^r_i)_{i \in \nn}$ is monotonically decreasing 
 and satisfies $\eps^r_i \geq \tilde{\eps}^r_i$.
 By \autoref{prop:eps^(r_n)_n<=h(1/r_n)}, 
 there exists $r_i \to \infty$ such that $\eps^{r_i}_i \leq g\big(\frac{1}{r_i}\big)$ 
 for all $i \in \nn$.
 In particular, 
 \[
  \dghp{r_i}{X_i}{p_i}{X}{p} 
  = \tilde{\eps}^{r_i}_i 
  \leq \eps^{r_i}_i 
  \leq g\Big(\frac{1}{r_i}\Big),
 \] 
 and this proves \ref{prop:dgh_small_iff_eps_approx_noncompact--b}.
 Obviously, \ref{prop:dgh_small_iff_eps_approx_noncompact--b} 
 implies \ref{prop:dgh_small_iff_eps_approx_noncompact--c} 
 via choosing $g:=\id$ and $\eps_i := \frac{1}{r_i}$.
 
 Finally, let $\dghp{r_i}{X_i}{p_i}{X}{p} \leq \eps_i$ for some $r_i \to \infty$ and $\eps_i \to 0$.
 Fix $r > 0$. 
 Let $i \in \nn$ be large enough such that $r < r_i$.
 Then
 \[\dghp{r}{X_i}{p_i}{X}{p} \leq 16 \eps_i,\]
 by \autoref{lem:GHA_restrict_to_smaller_set_and_different_base_point}, 
 and this implies the claim.
\end{proof}

\begin{cor}\label{cor:dgh_small_iff_eps_approx_noncompact}
 Let $(X,d_X,p)$ and $(X_i,d_{X_i},p_i)$, $i \in \nn$, be pointed length spaces.
 Then the following statements are equivalent. 
 \begin{enumerate}
  \item 
   $(X_i,p_i) \to (X,p)$.
  \item 
   There is $\eps_i \to 0$ such that 
   $\Isomp{\eps_i}{1/\eps_i}{X_i}{p_i}{X}{p} \ne \emptyset$ for all $i$.
  \item 
   There is $\eps_i \to 0$ such that 
   $\dghp{1/\eps_i}{X_i}{p_i}{X}{p} \leq \eps_i$ for all $i$.
 \end{enumerate}
\end{cor}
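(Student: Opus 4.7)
\smallskip\noindent\textbf{Proof plan.}
The plan is to derive (a) $\Leftrightarrow$ (c) directly from \autoref{prop:dgh_small_iff_eps_approx_noncompact}, and (b) $\Leftrightarrow$ (c) by translating between Gromov--Hausdorff distance bounds and $\eps$-approximations via \autoref{dgh_small_iff_eps_approx}, using \autoref{lem:GHA_restrict_to_smaller_set_and_different_base_point} to reconcile mismatches between the radii and the $\eps$-parameters.

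For (a) $\Rightarrow$ (c), I will apply part \ref{prop:dgh_small_iff_eps_approx_noncompact--b} of \autoref{prop:dgh_small_iff_eps_approx_noncompact} with $g = \id$ to produce $r_i \to \infty$ such that $\dghp{r_i}{X_i}{p_i}{X}{p} \leq 1/r_i$; then $\eps_i := 1/r_i$ witnesses (c). Conversely, given (c), the sequence $r_i := 1/\eps_i$ tends to infinity, so part \ref{prop:dgh_small_iff_eps_approx_noncompact--c} of the same proposition hands back (a).

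For (c) $\Rightarrow$ (b), starting from $\dghp{1/\eps_i}{X_i}{p_i}{X}{p} \leq \eps_i < 2\eps_i$, part \ref{dgh_small_iff_eps_approx--c} of \autoref{dgh_small_iff_eps_approx} produces a pair in $\Isomp{4\eps_i}{1/\eps_i}{X_i}{p_i}{X}{p}$; then part \ref{lem:GHA_restrict_to_smaller_set_and_different_base_point--a} of \autoref{lem:GHA_restrict_to_smaller_set_and_different_base_point}, applied with $p' = p$, $q' = q$ (so that $\delta = 0$) and $r = 1/(16\eps_i)$, restricts this pair to an element of $\Isomp{16\eps_i}{1/(16\eps_i)}{X_i}{p_i}{X}{p}$, so $\eps_i' := 16\eps_i$ witnesses (b). Conversely, for (b) $\Rightarrow$ (c), from $\Isomp{\eps_i}{1/\eps_i}{X_i}{p_i}{X}{p} \ne \emptyset$, part \ref{dgh_small_iff_eps_approx--d} of \autoref{dgh_small_iff_eps_approx} gives $\dghp{1/\eps_i}{X_i}{p_i}{X}{p} \leq 2\eps_i$; then part \ref{lem:GHA_restrict_to_smaller_set_and_different_base_point--b} of \autoref{lem:GHA_restrict_to_smaller_set_and_different_base_point}, applied with $R = 1/\eps_i$ and $r = 1/(32\eps_i) \leq R$, yields $\dghp{1/(32\eps_i)}{X_i}{p_i}{X}{p} \leq 16 \cdot 2\eps_i = 32\eps_i$, so $\eps_i' := 32\eps_i$ witnesses (c).

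The only real difficulty is the bookkeeping of constants: \autoref{dgh_small_iff_eps_approx} loses a factor of $2$ in each direction and restriction to a smaller ball via \autoref{lem:GHA_restrict_to_smaller_set_and_different_base_point} costs another factor of $16$. Since the statement only requires the chosen sequence to tend to zero, all these absolute constants can be absorbed harmlessly by rescaling.
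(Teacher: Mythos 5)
Your proof is correct and follows essentially the same route as the paper, which simply cites \autoref{dgh_small_iff_eps_approx} and \autoref{prop:dgh_small_iff_eps_approx_noncompact} as a ``direct implication''; you merely make explicit the radius/constant bookkeeping via \autoref{lem:GHA_restrict_to_smaller_set_and_different_base_point}, which the paper leaves implicit. The constants $16$ and $32$ check out, and since only the limit $\eps_i \to 0$ matters, the argument is complete.
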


\begin{proof} 
 This is a direct implication 
 of \autoref{dgh_small_iff_eps_approx} 
 and \autoref{prop:dgh_small_iff_eps_approx_noncompact}.
\end{proof}


Similarly to the compact case, 
the \GH distance and convergence, respectively, is related to the diameters of the spaces:
On the one hand, 
the distance of balls in $X$ and $X \times Y$ are bounded from above by the diameter of $Y$.
Recall that in the special case of $X = \{\pt\}$, 
the (non-pointed) distance equals $\frac{1}{2} \diam(Y)$.
On the other hand, 
in the compact case it was proven that convergence of spaces implies convergence of the diameters. 
For length spaces, an analogous statement will be established.
\begin{prop}\label{prop-C}
  Let $(X,d_X,x_0)$ and $(Y,d_Y,y_0)$ be pointed metric spaces. 
  If $Y$ is compact,
  then \[\dghp{r}{X}{x_0}{X \times Y}{(x_0,y_0)} \leq \diam(Y)\] 
  for all $r > 0$.
\end{prop}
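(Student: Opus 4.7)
The plan is to identify the ball $\B_r^X(x_0)$ with the ``horizontal slice'' $\B_r^X(x_0) \times \{y_0\}$ inside $\B_r^{X \times Y}((x_0,y_0))$, and to use this identification to define an admissible metric on the disjoint union which realises the desired bound.

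First I fix the product metric on $X \times Y$ (any of the standard choices works) and record the two facts I will use: the inclusion $\iota : X \to X \times Y$, $x \mapsto (x,y_0)$, is an isometric embedding, and the projection $X \times Y \to X$ is distance-non-increasing. In particular, $d_X(x,x_0) \leq d_{X \times Y}((x,y),(x_0,y_0))$, so every element $(x',y') \in \B_r^{X \times Y}((x_0,y_0))$ has $x' \in \B_r^X(x_0)$.

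Next I define an admissible metric $d$ on the disjoint union $\B_r^X(x_0) \amalg \B_r^{X \times Y}((x_0,y_0))$ by
\[
 d(x,(x',y')) := d_{X \times Y}((x,y_0),(x',y'))
\]
for $x \in \B_r^X(x_0)$ and $(x',y') \in \B_r^{X \times Y}((x_0,y_0))$, and by keeping the given metrics on each factor. By construction the restriction to $\B_r^X(x_0) \times \B_r^X(x_0)$ equals $d_X$ (via $\iota$), so $d$ is admissible once it is shown to be a metric. The main step is therefore the triangle inequality for the three mixed configurations of basepoints (two in $X$ and one in $X\times Y$, and vice versa); each of these reduces after a one-line substitution to the triangle inequality in $X \times Y$ together with the fact that $\iota$ is isometric. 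This is the only slightly fiddly part of the argument, but it is entirely routine.

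Finally, I estimate the pointed Hausdorff distance. Since $d(x_0,(x_0,y_0)) = d_{X\times Y}((x_0,y_0),(x_0,y_0)) = 0$, the pointed term contributes nothing. For $x \in \B_r^X(x_0)$, the point $\iota(x)= (x,y_0)$ lies in $\B_r^{X \times Y}((x_0,y_0))$ and $d(x,\iota(x))=0$. Conversely, for $(x',y') \in \B_r^{X \times Y}((x_0,y_0))$ the projection $x'$ lies in $\B_r^X(x_0)$ by the distance-non-increasing property of the projection, and
\[
 d(x',(x',y')) \;=\; d_{X\times Y}((x',y_0),(x',y')) \;=\; d_Y(y_0,y') \;\leq\; \diam(Y).
\]
Hence $\dH^d \leq \diam(Y)$, which gives the claim. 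The only real obstacle is verifying that the glued metric is actually a metric; every other step is a direct unwinding of the definitions.
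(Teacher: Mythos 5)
Your gluing is the same one the paper uses---identify $\B_r^X(x_0)$ with the slice $\B_r^X(x_0)\times\{y_0\}$ and observe that a general point $(x',y')$ lies at distance $d_Y(y_0,y')\leq\diam(Y)$ from that slice---but as written your candidate $d$ is not an admissible metric, only a pseudometric. Indeed, $d(x,(x,y_0)) = d_{X\times Y}((x,y_0),(x,y_0)) = 0$ even though $x$ and $(x,y_0)$ are distinct points of the disjoint union, and \autoref{def:dGH-cpt-npt} requires an admissible metric to be a genuine metric on the disjoint union, in particular positive definite. So the $d$ you construct cannot be fed into the infimum defining $\dGH$, and the step ``$d$ is admissible once it is shown to be a metric'' fails precisely at definiteness, not at the triangle inequality.

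The repair is routine and is exactly what the paper's proof supplies: perturb the cross-distances, e.g.\ set $d(x,(x',y')) := d_{X\times Y}((x,y_0),(x',y')) + \delta$ for fixed $\delta>0$ (the paper instead takes $\sqrt{d_X(x,x')^2+d_Y(y',y_0)^2+\delta^2}$, which is the same regularisation adapted to the $\ell^2$ product metric and verified via the Minkowski inequality). All the triangle inequalities you invoke survive the perturbation verbatim, the Hausdorff estimate becomes $\diam(Y)+\delta$, the pointed term becomes $\delta$ instead of $0$, and letting $\delta\to 0$ yields the claim. Your remark that any standard product metric works is correct, since the projection is $1$-Lipschitz and the slice inclusion is isometric in each case; just note that the proposition implicitly presupposes some fixed choice, and the paper's computation is tailored to the $\ell^2$ one. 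With the $\delta$-regularisation added, your argument coincides with the paper's.
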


\begin{proof}
 It suffices to define an admissible metric 
 and to estimate the Hausdorff distance with respect to this metric.
   
 Let $\delta > 0$ be arbitrary. 
 Define an admissible metric $d$ on $(X \times Y) \amalg X$ by 
 \[ d((x,y),x') := \sqrt{d_X(x,x')^2 + d_Y(y,y_0)^2 + \delta^2}.\]
 As usual, the only tricky part is to prove the triangle inequality:
 By the Minkowski inequality, for $x_1,x_1',x_2,x_2' \in X$ and $y_1,y_2 \in Y$,
  \begin{align*}
   &d((x_1,y_1),x_1') + d(x_1',x_2')\\
   &=\sqrt{d_X(x_1,x_1')^2 + d_Y(y_1,y_0)^2 + \delta^2} + d_X(x_1',x_2') 
   \displaybreak[0]\\
   &\geq \sqrt{d_X(x_1,x_1')^2 + d_X(x_1',x_2')^2 + d_Y(y_1,y_0)^2 + \delta^2} 
   \displaybreak[0]\\
   &\geq \sqrt{d_X(x_1,x_2')^2 + d_Y(y_1,y_0)^2 + \delta^2} \\
   &= d((x_1,y_1),x_2').
  \end{align*}
  With completely analogous argumentation, one can prove the remaining inequalities
 \begin{align*}
   d(x_1',(x_1,y_1)) + d((x_1,y_1),x_2')	&\geq d(x_1',x_2')     ,
   \\
   d((x_1,y_1),(x_2,y_2)) + d((x_2,y_2),x_2')	&\geq d((x_1,y_1),x_2') 
   \quad\aand\displaybreak[0]\\
  d((x_1,y_1),x_1') + d(x_1',(x_2,y_2))	&\geq d((x_1,y_1),(x_2,y_2)).
  \end{align*}
  Now fix $r > 0$ and let $(x,y) \in \B_r^{X \times Y}((x_0,y_0))$ be arbitrary.
  In particular, $x \in \B_r^{X}(x_0)$. 
  Thus,
  \begin{align*}
  d((x,y), \B_r^{X}(x_0)) 
    &\leq d((x,y),x)  
  \\&= \sqrt{d_Y(y,y_0)^2 + \delta^2}
  \\&\leq \sqrt{\diam(Y)^2 + \delta^2}.
  \end{align*}
  Hence, 
  \[\B_r^{X \times Y}((x_0,y_0)) \subseteq \B^d_{\sqrt{\diam(Y)^2 + \delta^2}} (\B_r^{X}(x_0)).\]
  For arbitrary $x \in \B_r^{X}(x_0)$, 
  one has $d((x,y_0),(x_0,y_0)) = d_X(x,x_0) < r$, 
  and therefore, $(x,y_0) \in \B_r^{X \times Y}((x_0,y_0))$.
  Thus,
  \[d(x, \B_r^{X \times Y}(x_0,y_0)) \leq d(x,(x,y_0)) = \delta\]
  and \[\B_r^{X}(x_0) \subseteq \B^d_{\delta} (\B_r^{X \times Y}(x_0,y_0)).\]
  Hence,
  \begin{align*}
   &\dghp{r}{X}{x_0}{X \times Y}{(x_0,y_0)}\\
   &\leq \dH^d(\B_r^{X}(x_0),\B_r^{X \times Y}(x_0,y_0) ) \\
   &\leq \max\{\sqrt{\diam(Y)^2 + \delta^2}, \delta\}\\
   &= \sqrt{\diam(Y)^2 + \delta^2}.
  \end{align*}
 Since $\delta$ was arbitrary, this proves the claim.
\end{proof}

In order to prove the convergence of diameters, 
one needs the following property of length spaces of infinite diameter:
Any ball of radius $r$ has diameter at least $r$. 
Though it is easy to see this, for the sake of completeness, the proof is given first.
\begin{lemma}\label{lem_GH:r_ball_has_diam>=r}
 Let $(X,d,p)$ be a pointed length space and $0 < r < \frac{\diam(X)}{2}$.
 Then $\diam(\B_r^X(p)) \geq r$.
\end{lemma}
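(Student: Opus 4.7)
The plan is to exhibit a point $q' \in \B_r^X(p)$ with $d(p,q') = r$; together with $p \in \B_r^X(p)$ this immediately yields $\diam(\B_r^X(p)) \geq r$.

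First, using the hypothesis $\diam(X) > 2r$, I would locate an auxiliary point $q \in X$ with $d(p,q) > r$. Pick $x, y \in X$ with $d(x,y) > 2r$; the triangle inequality $d(x,y) \leq d(x,p) + d(p,y)$ forces at least one of $d(p,x)$, $d(p,y)$ to exceed $r$, and such a point can serve as $q$.

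Next, I would invoke the length space property in the form that for every $\eps > 0$ there exists a continuous curve $c \colon [0, L] \to X$ with $c(0) = p$, $c(L) = q$ and $L(c) \leq d(p,q) + \eps$. The function $\phi(t) := d(p, c(t))$ is then continuous on $[0,L]$, with $\phi(0) = 0$ and $\phi(L) = d(p,q) > r$, so the intermediate value theorem furnishes some $t^* \in (0,L)$ with $\phi(t^*) = r$. Setting $q' := c(t^*)$ yields $q' \in \B_r^X(p)$ together with $d(p,q') = r$, which completes the argument.

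The main point to bear in mind --- it is not really a serious obstacle --- is that a general length space need not admit a minimising geodesic from $p$ to $q$; only near-minimising curves are guaranteed. For this reason I would not attempt to parametrise by arc length (which at first glance looks tempting, since then one could directly set $q' := c(r)$), but would instead argue purely through the continuity of $\phi$ and the intermediate value theorem, which only requires the existence of \emph{some} continuous curve from $p$ to $q$ --- and this the length space hypothesis certainly provides.
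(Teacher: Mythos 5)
Your proof is correct. The overall skeleton matches the paper's: both first produce a point $q$ with $d(p,q)>r$ (the paper does this by contradiction from $\B_r(p)=X\Rightarrow\diam(X)\le 2r$, you do it directly via the triangle inequality applied to a pair realising $d(x,y)>2r$; both are fine), and both then walk along a curve from $p$ towards $q$ to land on a point of $\B_r^X(p)$ at distance exactly $r$ from $p$. The genuine difference is in that second step: the paper fixes a \emph{minimising} geodesic $\gamma:[0,l_r]\to X$ and simply takes $\gamma(r)$, which has $d(p,\gamma(r))=r$ because the geodesic is minimising and unit-speed. You instead take an arbitrary continuous curve $c$ from $p$ to $q$ (guaranteed by the length-space definition, since $d(p,q)<\infty$ forces the infimum to range over a non-empty set of curves) and apply the intermediate value theorem to $t\mapsto d(p,c(t))$. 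Your version is slightly more general: it does not need minimising geodesics to exist, whereas the paper's tacitly relies on the standing assumption of Section 2 that all spaces are proper, so that Hopf--Rinow supplies a minimising geodesic. Within the paper's conventions both arguments are valid; yours would survive dropping properness, at the cost of a marginally longer argument.
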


\begin{proof}
 Assume that $d(q,p) \leq r$ for all $q \in X$.
 Hence, $\B_r(p) = X$,
 and this implies $\diam(X) \leq 2r < \diam(X)$, which is a contradiction.

 Hence, there is $q_r \in X$ such that $l_r := d(q_r,p) > r$.
 Fix a minimising geodesic $\gamma: [0,l_r] \to X$ with $\gamma(0) = p$ and $\gamma(l_r) = q_r$.
 Then $d(p,\gamma(r)) = r$, hence, $\gamma(r) \in \B_r(p)$. 
 In particular, 
 $\diam(\B_r(p)) \geq d(p,\gamma(r)) = r$.
\end{proof}

\begin{prop}\label{prop:GH-small->diff_diam_small--noncompact}
 Let $(X,d_X,p)$ and $(X_i,d_{X_i},p_i)$, $i \in \nn$, be pointed length spaces.
 If $(X_i,p_i) \to (X,p)$,
 then $\diam(X_i) \to \diam(X)$. 
 (Here, both $\diam(X_i)$ tending to infinity 
 as well as the notion $\infty \to \infty$ are allowed.)
\end{prop}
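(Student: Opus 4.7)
The plan is to split into two cases depending on whether $\diam(X)$ is finite, using \autoref{prop:GH_small->diff_diam_small} (which says the diameter is $2$-Lipschitz in $\dGH$ on compact metric spaces) together with \autoref{lem_GH:r_ball_has_diam>=r} (which says the closed $r$-ball in a pointed length space has diameter at least $r$ whenever $r < \diam/2$) as the two main tools. Since the spaces are assumed proper, all closed balls are compact, so \autoref{prop:GH_small->diff_diam_small} is applicable to them.

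If $\diam(X) = \infty$, fix any $M > 0$. Then \autoref{lem_GH:r_ball_has_diam>=r} gives $\diam(\B_M^X(p)) \geq M$. From the assumed convergence $\dGH(\B_M^{X_i}(p_i),\B_M^X(p)) \to 0$, \autoref{prop:GH_small->diff_diam_small} yields $\diam(\B_M^{X_i}(p_i)) \to \diam(\B_M^X(p)) \geq M$. Since $\diam(X_i) \geq \diam(\B_M^{X_i}(p_i))$, one obtains $\liminf_{i\to\infty} \diam(X_i) \geq M$; as $M$ was arbitrary, this forces $\diam(X_i) \to \infty$.

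If $D := \diam(X) < \infty$, then $\B_R^X(p) = X$ for every $R \geq D$. Fix $\epsilon \in (0,1)$ and set $R := D + \epsilon$. By convergence at radius $R$ and \autoref{prop:GH_small->diff_diam_small}, one has $\diam(\B_R^{X_i}(p_i)) \to D$, so eventually $\diam(\B_R^{X_i}(p_i)) < R$. If we also had $\diam(X_i) > 2R$, then \autoref{lem_GH:r_ball_has_diam>=r} applied with radius $R < \diam(X_i)/2$ would force $\diam(\B_R^{X_i}(p_i)) \geq R$, a contradiction. Hence $\diam(X_i) \leq 2R$ for all sufficiently large $i$. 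Choosing now any $R' > 2R$, this bound guarantees $\B_{R'}^{X_i}(p_i) = X_i$ eventually, and applying \autoref{prop:GH_small->diff_diam_small} at radius $R'$ once more gives
\[
  \diam(X_i) = \diam(\B_{R'}^{X_i}(p_i)) \to \diam(\B_{R'}^X(p)) = D.
\]

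The one real subtlety is the finite-diameter case: since $X_i$ is not a priori known to be bounded, one cannot directly feed $X_i$ and $X$ themselves into \autoref{prop:GH_small->diff_diam_small}. The preliminary bound $\diam(X_i) \leq 2R$ produced via \autoref{lem_GH:r_ball_has_diam>=r} is precisely what allows one to work inside a single ball containing all of $X_i$, so that the compact statement becomes applicable.
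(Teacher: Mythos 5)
Your proof is correct and follows essentially the same route as the paper's: the same case split on whether $\diam(X)$ is finite, the same two key tools (\autoref{prop:GH_small->diff_diam_small} applied to balls and \autoref{lem_GH:r_ball_has_diam>=r}), and the same strategy of first extracting a uniform diameter bound on the $X_i$ in the finite case so that everything happens inside a single ball. Your version is slightly streamlined — fixed radii $M$, $R$, $R'$ instead of the radii $1/\eps_i$ from \autoref{cor:dgh_small_iff_eps_approx_noncompact}, and a direct $\liminf$ argument in the unbounded case where the paper argues by contradiction — but these are cosmetic differences.
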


\begin{proof}
 Let $\eps_i \to 0$ be as in \autoref{cor:dgh_small_iff_eps_approx_noncompact}
 with
 \[\dghp{1/\eps_i}{X_i}{p_i}{X}{p} \leq \eps_i.\]
 By \autoref{prop:GH_small->diff_diam_small},
 $|\diam(\B^{X_i}_{1/\eps_i}(p_i))  - \diam(\B^X_{1/\eps_i}(p))| \leq 2 \eps_i \to 0$.
 Distinguish the two cases of $X$ being bounded and unbounded, respectively.
 
 \textit{Case 1: $\diam(X) < \infty$.}
  Without loss of generality, assume $\diam(X) < \frac{1}{2\eps_i}$ for all $i \in \nn$.
  Then $X = B_{1/\eps_i}^X(p)$
  and 
  \[
   |\diam(\B^{X_i}_{1/\eps_i}(p_i))  - \diam(X)| 
   = |\diam(\B^{X_i}_{1/\eps_i}(p_i))  - \diam(\B^X_{1/\eps_i}(p))| 
   \to 0,
  \]
  in particular, $\diam(\B^{X_i}_{1/\eps_i}(p_i)) \to \diam(X)$ as $i \to \infty$.
  Without loss of generality,
  assume $\diam(\B^{X_i}_{1/\eps_i}(p_i)) \leq 2 \cdot \diam(X)$ for all $i \in \nn$.
  
  Let $r_i := \min\!\big\{ \frac{1}{\eps_i}, \frac{1}{3} \cdot \diam(X_i) \big\} 
  < \frac{1}{2} \cdot \diam(X_i)$.
  By \autoref{lem_GH:r_ball_has_diam>=r},
  \begin{align*}
  r_i 
  \leq \diam(B_{r_i}^{X_i}(p_i)) 
  \leq \diam(B_{1/\eps_i}^{X_i}(p_i)) 
  \leq 2 \cdot \diam(X)
  < \frac{1}{\eps_i}.
  \end{align*}
  Hence, $\diam(X_i) = 3 r_i \leq 6 \cdot \diam(X)$, the $X_i$ are compact
  and \autoref{prop:GH_small->diff_diam_small} implies the claim.

 \textit{Case 2: $\diam(X) = \infty$.}
  Assume there is a subsequence $(i_j)_{j \in \nn}$ and $C > 0$ 
  with $\diam(X_{i_j}) < C$ for all $j \in \nn$.
  Pass to this subsequence.
  After passing to a further subsequence,
  $C < \frac{1}{\eps_i}$ for all $i \in \nn$.
  Then $X_i = B_{1/\eps_i}^{X_i}(p_i)$ 
  and this implies $\diam(\B^{X_i}_{1/\eps_i}(p_i)) = \diam(X_i) < C$.
  Further, by \autoref{lem_GH:r_ball_has_diam>=r},
  $\diam(\B_{1/\eps_i}^X(p)) \geq \frac{1}{\eps_i}$
  and
  \begin{align*}
   |\diam(\B^{X_i}_{1/\eps_i}(p_i)) - \diam(\B^X_{1/\eps_i}(p))| 
   \geq \frac{1}{\eps_i} - C
   \to \infty.
  \end{align*}
  This is a contradiction. Hence, $\diam(X_i) \to \infty$.
\end{proof}


\GH convergence is compatible with rescaling:
Given a converging sequence of length spaces and a converging sequence of rescaling factors,
the rescaled sequence converges
and the limit space is the original one rescaled by the limit of the rescaling sequence.
More generally, given a converging sequence of metric spaces 
and some bounded sequence of rescaling factors, 
the sublimits of the rescaled sequence correspond exactly 
to the sublimits of the rescaling sequence.

For a metric space $(X,d)$, 
recall that $\alpha X$ denotes the rescaled metric space $(X,\alpha\,d)$.

\begin{prop}\label{prop-E}
 Let $(X,d_X,p)$ and $(X_i,d_{X_i},p_i)$, $i \in \nn$, be pointed length spaces
 and $r_i, r, \alpha_i, \alpha > 0$.
 \begin{enumerate} 
  \item\label{prop-E--a}
   If $(X_i,p_i) \to (X,p)$ and $r_i \to r$, 
   then $(\B_{r_i}^{X_i}(p_i),p_i) \to (\B_r^{X}(p),p)$.
  \item\label{prop-E--b} 
   If $\alpha_i \to \alpha$, 
   then $(\alpha_i X, p) \to (\alpha X, p)$.
  \item\label{prop-E--c}
   If $(X_i,p_i) \to (X,p)$ and $\alpha_i \to \alpha$, 
   then $(\alpha_i X_i,p_i) \to (\alpha X,p)$.
  \item\label{prop-E--d}
   If $(X_i,p_i) \to (X,p)$ and $(\alpha_i X_i,p_i) \to (Y,q)$, 
   then there is $\alpha$ such that $\alpha_i \to \alpha$ 
   and $(Y,q) \cong (\alpha X,p)$.
 \end{enumerate}
\end{prop}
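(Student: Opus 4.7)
My plan is to reduce each part of \autoref{prop-E} to the triangle inequality for the pointed \GH distance, combined with the estimates in \autoref{lem_GH:estimate_GH-distance_of_balls_in_same_space}, the compact rescaling lemma, and, for \ref{prop-E--d}, the uniqueness statement \autoref{lem_GH:GH_limit_unique_up_to_pointed_isometry}. Since the spaces are proper length spaces, all balls in sight are compact, so every estimate can be done in the compact pointed \GH distance, and \autoref{lem:GHA_restrict_to_smaller_set_and_different_base_point} then transfers bounds at one fixed radius down to every smaller radius.

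For \ref{prop-E--a} I would insert $(\B_r^{X_i}(p_i),p_i)$ and split
\[\dGH\bigl((\B_{r_i}^{X_i}(p_i),p_i),(\B_r^X(p),p)\bigr)\le\dGH\bigl((\B_{r_i}^{X_i}(p_i),p_i),(\B_r^{X_i}(p_i),p_i)\bigr)+\dGH\bigl((\B_r^{X_i}(p_i),p_i),(\B_r^X(p),p)\bigr).\]
The first summand is at most $|r_i-r|$ by \autoref{lem_GH:estimate_GH-distance_of_balls_in_same_space}, and the second tends to $0$ by hypothesis.

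Parts \ref{prop-E--b} and \ref{prop-E--c} use the same splitting strategy. Writing $K_i:=\B_{r/\alpha_i}^X(p)$ and $K:=\B_{r/\alpha}^X(p)$, I would bound $\dGH((\B_r^{\alpha_iX}(p),p),(\B_r^{\alpha X}(p),p))$ by inserting $(\alpha_iK,p)$: the $(\alpha_iK_i,\alpha_iK)$-leg equals $\alpha_i\,\dGH((K_i,p),(K,p))\le\alpha_i|r/\alpha_i-r/\alpha|=r\,|1-\alpha_i/\alpha|\to 0$ by the compact rescaling lemma combined with \autoref{lem_GH:estimate_GH-distance_of_balls_in_same_space}, while the $(\alpha_iK,\alpha K)$-leg vanishes because the identity map on $K$ is an approximation with distortion at most $|\alpha_i-\alpha|\diam(K)$ between the two rescaled metrics, so \autoref{dgh_small_iff_eps_approx} yields a bound tending to $0$. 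For \ref{prop-E--c} I further insert $(\alpha X_i,p_i)$: the $(\alpha_iX_i,\alpha X_i)$-leg runs as in \ref{prop-E--b} uniformly in $i$ (the estimate depends only on $r,\alpha_i,\alpha$), and the $(\alpha X_i,\alpha X)$-leg equals $\alpha\,\dGH((\B_{r/\alpha}^{X_i}(p_i),p_i),(\B_{r/\alpha}^X(p),p))\to 0$ by hypothesis.

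Part \ref{prop-E--d} is the substantive one. Assuming $X\neq\{p\}$, fix $x\in X$ with $L:=d_X(x,p)>0$ and, via \autoref{cor:dgh_small_iff_eps_approx_noncompact}, extract $x_i\in X_i$ with $d_{X_i}(x_i,p_i)\to L$. The two convergences jointly constrain $\alpha_iL=d_{\alpha_iX_i}(x_i,p_i)$: if some subsequence of $\alpha_i$ tended to $0$ or $\infty$, then $(Y,q)$ along that subsequence would be a blow-down or blow-up of $(X,p)$ at $p$, which can be ruled out via the distance spectrum of $(X,p)$ together with \ref{prop-E--c}; thus $(\alpha_i)$ is precompact in $(0,\infty)$. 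For any subsequential limit $\alpha_{i_k}\to\alpha>0$, part \ref{prop-E--c} combined with \autoref{lem_GH:GH_limit_unique_up_to_pointed_isometry} yields $(Y,q)\cong(\alpha X,p)$. Finally, if two subsequential limits $\alpha,\alpha'$ produced isometries $(\alpha X,p)\cong(\alpha'X,p)$, the distance spectrum $D:=\{d_X(x,p):x\in X\}$ would satisfy $\alpha D=\alpha'D$; since $X$ is a proper length space with $X\neq\{p\}$, Hopf--Rinow produces a minimising geodesic from $p$ realising the whole interval $[0,d_X(p,x)]\subseteq D$, forcing $\alpha=\alpha'$. The degenerate case $X=\{p\}$ follows from \autoref{prop:GH-small->diff_diam_small--noncompact}. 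The main obstacle is this uniqueness-of-scale step: the length-space structure is essential, since without it a space can admit pointed self-similarities that would allow $\alpha_i$ to drift between two values while both rescaling limits remain isometric.
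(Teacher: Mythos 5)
For parts \ref{prop-E--a}--\ref{prop-E--c} your argument is essentially the paper's: the same triangle-inequality splittings, the same appeal to \autoref{lem_GH:estimate_GH-distance_of_balls_in_same_space}, and the identity map as an approximation between the two rescaled metrics on a fixed compact ball. In \ref{prop-E--c} you insert $(\alpha X_i,p_i)$ where the paper inserts $(\B_{\alpha_i r/\alpha}^{\alpha_i X}(p),p)$ and $(\B_{r}^{\alpha_i X}(p),p)$, but the estimates are the same and your uniformity remark (the $(\alpha_i X_i,\alpha X_i)$-leg is bounded by a quantity depending only on $r,\alpha_i,\alpha$) is correct.

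Part \ref{prop-E--d} is where you diverge and where there is a genuine gap. The paper's own proof is deliberately minimal: it fixes an \emph{arbitrary} accumulation point $\alpha$ of $(\alpha_i)$, applies \ref{prop-E--c} along the corresponding subsequence, and concludes $(Y,q)\cong(\alpha X,p)$ from uniqueness of pointed limits (\autoref{lem_GH:GH_limit_unique_up_to_pointed_isometry}); it never argues that $(\alpha_i)$ converges, nor that it has an accumulation point in $(0,\infty)$. You try to supply exactly these missing steps, but your tools cannot do it. The distance spectrum $D=\{d_X(x,p)\mid x\in X\}$ of a proper length space is the interval $[0,\sup_{x}d_X(x,p)]$; if $X$ is unbounded this is $[0,\infty)$, which is invariant under every rescaling, so $\alpha D=\alpha'D$ forces nothing, and the same scale-invariance defeats your proposed exclusion of $\alpha_i\to 0$ or $\alpha_i\to\infty$. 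This is not a repairable defect of your argument, because the convergence claim $\alpha_i\to\alpha$ is false in general: for $X_i=X=\rr$, $p_i=p=0$ and $\alpha_i$ alternating between $1$ and $2$, every $(\alpha_i X_i,0)$ is pointed isometric to $(\rr,0)$, so both hypotheses of \ref{prop-E--d} hold with $(Y,q)=(\rr,0)$, yet $(\alpha_i)$ does not converge; and for $X_i=X=[0,1]$, $p_i=p=0$, $\alpha_i=i$, one has $(\alpha_i X_i,0)\to([0,\infty),0)$ with no finite limit of $(\alpha_i)$ at all. (Your closing remark that the length-space structure rules out such pointed self-similarities is mistaken: $\rr$ is a length space.) What is actually provable --- and all the paper's proof establishes --- is the weaker statement that $(Y,q)\cong(\alpha X,p)$ for every accumulation point $\alpha\in(0,\infty)$ of $(\alpha_i)$; this is also the form in which the result is used in the corollary that follows, where $(\alpha_i)$ is additionally assumed bounded.
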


\begin{proof}
 \ref{prop-E--a}
 By \autoref{lem_GH:estimate_GH-distance_of_balls_in_same_space},
 \[
  \dGH((\B_{r_i}^{X_i}(p_i),p_i), (\B_r^{X_i}(p_i),p_i))
  \leq |r - r_i| 
  \to 0,
 \]
 and the triangle inequality implies
 \begin{align*}
  &\dGH(\B_{r_i}^{X_i}(p_i), \B_r^{X}(p))
  \\&\leq \dGH(\B_{r_i}^{X_i}(p_i), \B_r^{X_i}(p_i)) + \dGH(\B_{r}^{X_i}(p_i), \B_r^{X}(p))
  \\&\to 0.
 \end{align*}
   
 \par\smallskip\noindent\ref{prop-E--b}
 Without loss of generality, let $\alpha = 1$. 
 First, let $X$ be compact.
 Define $f_i: X \to \alpha_i X$ and $g_i : \alpha_i X \to X$ 
 by $f_i(x) := x$ and $g_i(x) := x$ for all $x \in X$. 
 Furthermore, let $0 < \eps_i := 2 \cdot |\alpha_i - 1| \cdot \diam(X) \to 0$.
 For any $x,x' \in X$, 
 \begin{align*}
  |d_{\alpha_i X}(f_i(x),f_i(x')) - d_X(x,x')| &= |\alpha_i-1| \cdot d_X(x,x') < \eps_i.
  \intertext{Analogously,}
  |d_{\alpha_i X}(x,x') - d_X(g_i(x),g_i(x'))| & < \eps_i.
 \end{align*}
 Furthermore, $d_X(x, g_i \circ f_i(x)) = 0 < \eps_i$ 
 and $d_X(f_i \circ g_i(x),x) = 0 < \eps_i$.
 Thus, $(f_i,g_i) \in \Isom{\eps_i}((\alpha_i X,p), (X,p))$ 
 and $(\alpha_i X,p) \to (X,p)$.
 Now let $X$ be non-compact and $r > 0$. Then, using \ref{prop-E--a} and the compact case,
 \begin{align*}
  &\dGH((\B_r^{\alpha_i X}(p),p), (\B_r^X(p),p))\\
  &\leq \dGH((\B_r^{\alpha_i X}(p),p), (\B_{\alpha_i r}^{\alpha_i X}(p),p)) 
     + \dGH((\B_{\alpha_i r}^{\alpha_i X}(p),p), (\B_r^X(p),p)) \\
  &= \alpha_i \cdot \dGH((\B_{r/\alpha_i}^{X}(p),p), (\B_{r}^{X}(p),p)) 
     + \dGH((\alpha_i \B_{r}^{X}(p),p), (\B_r^X(p),p)) \\
  &\to 0.
 \end{align*}

 \par\smallskip\noindent\ref{prop-E--c}
 By the triangle inequality, for fixed $r > 0$,
 \begin{align*}
  &\dGH((\B_{r}^{\alpha_i X_i}(p_i),p_i), (\B_{r}^{\alpha X}(p),p)) \\
  &\leq\dGH((\B_{r}^{\alpha_i X_i}(p_i),p_i), (\B_{\alpha_i r/\alpha}^{\alpha_i X}(p),p)) \\&\quad
  +	\dGH((\B_{\alpha_i r/\alpha}^{\alpha_i X}(p),p), (\B_{r}^{\alpha_i X}(p),p)) \\&\quad
  +	\dGH((\B_{r}^{\alpha_i X}(p),p), (\B_{r}^{\alpha X}(p),p)). 
 \end{align*} 
 By \ref{prop-E--a},
 \begin{align*}
  &\dGH((\B_{r}^{\alpha_i X_i}(p_i),p_i), (\B_{\alpha_i r/\alpha}^{\alpha_i X}(p),p)) 
  \\&= \alpha_i \cdot \dGH((\B_{r/\alpha_i}^{X_i}(p_i),p_i), (\B_{r/\alpha}^{X}(p),p)) 
  \to 0,
 \end{align*}
 by \autoref{lem_GH:estimate_GH-distance_of_balls_in_same_space},
 \begin{align*} 
  \dGH((\B_{\alpha_i r/\alpha}^{\alpha_i X}(p),p), (\B_{r}^{\alpha_i X}(p),p)) 
  \leq |r - \frac{\alpha_i}{\alpha} \cdot r| 
  \to 0,
 \end{align*}
 and by \ref{prop-E--b}, 
 \[\dGH((\B_{r}^{\alpha_i X}(p),p), (\B_{r}^{\alpha X}(p),p)) \to 0.\] 
 Hence, 
 $(\B_{r}^{\alpha_i X_i}(p_i),p_i) \to (\B_{r}^{\alpha X}(p),p)$ for every $r > 0$.
 
 \par\smallskip\noindent\ref{prop-E--d}
 Let $\alpha$ be an arbitrary accumulation point of $(\alpha_i)_{i \in \nn}$. 
 Hence, for a subsequence $(i_j)_{j \in \nn}$,
 both $\alpha_{i_j} \to \alpha$, and by \ref{prop-E--c},
 $(\alpha_{i_j} X_{i_j}, p_{i_j}) \to (\alpha X, p) \as j \to \infty$.
 On the other hand, $(\alpha_{i_j} X_{i_j}, p_{i_j}) \to (Y, q) \as j \to \infty$.
 Thus, $(Y,q)$ and $(\alpha X, p)$ are isometric 
 (cf.~\autoref{lem_GH:GH_limit_unique_up_to_pointed_isometry}).
\end{proof}

\begin{cor}
 Let $(X,d_X,p)$ and $(X_i,d_{X_i},p_i)$, $i \in \nn$, be pointed length spaces
 and $(\alpha_i)_{i \in \nn}$ be a bounded sequence.
 If $(X_i,p_i) \to (X,p)$, 
 then the sublimits of $(\alpha_i X_i,p_i)$
 correspond to the $(\alpha X, p)$ 
 for exactly the accumulation points $\alpha$ of $(\alpha_i)_{i \in \nn}$.
\end{cor}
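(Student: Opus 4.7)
The plan is to reduce the statement to an application of part \ref{prop-E--c} of \autoref{prop-E} combined with the uniqueness of pointed \GH limits (\autoref{lem_GH:GH_limit_unique_up_to_pointed_isometry}), splitting the proof into the two inclusions of sets of sublimits.

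For the first direction, suppose $\alpha$ is an accumulation point of $(\alpha_i)$. Choose a subsequence $\alpha_{i_j} \to \alpha$. Since $(X_i,p_i) \to (X,p)$ passes trivially to subsequences, also $(X_{i_j}, p_{i_j}) \to (X,p)$. Applying \autoref{prop-E}~\ref{prop-E--c} to this subsequence directly yields
\[
 (\alpha_{i_j} X_{i_j}, p_{i_j}) \to (\alpha X, p),
\]
so $(\alpha X,p)$ occurs as a sublimit of $(\alpha_i X_i, p_i)$.

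For the converse direction, let $(Y,q)$ be any sublimit, realised by a subsequence $(\alpha_{i_j} X_{i_j}, p_{i_j}) \to (Y,q)$. Since $(\alpha_i)_{i\in\nn}$ is bounded, the Bolzano-Weierstrass theorem allows us to pass to a further subsequence along which $\alpha_{i_{j_k}} \to \alpha$ for some $\alpha\geq 0$; in particular $\alpha$ is an accumulation point of the original sequence $(\alpha_i)$. Applying \autoref{prop-E}~\ref{prop-E--c} along this further subsequence gives $(\alpha_{i_{j_k}} X_{i_{j_k}}, p_{i_{j_k}}) \to (\alpha X, p)$, while by construction the same subsequence also converges to $(Y,q)$. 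Uniqueness of pointed \GH limits up to isometry (\autoref{lem_GH:GH_limit_unique_up_to_pointed_isometry}) then forces $(Y,q) \cong (\alpha X, p)$, completing the correspondence.

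The only subtlety I expect is the edge case $\alpha = 0$: rescaling by $0$ does not yield a metric space, so one has to argue that this case can be excluded, either by assuming implicitly (consistent with the setup of \autoref{prop-E}) that the accumulation points under consideration are positive, or by observing separately that if $\alpha_{i_{j_k}} \to 0$ then the diameters of the rescaled balls $\B_r^{\alpha_{i_{j_k}} X_{i_{j_k}}}(p_{i_{j_k}})$ collapse, which is incompatible with convergence to a genuine pointed length space $(Y,q)$ unless $(Y,q)$ is a one-point space; apart from this bookkeeping the argument is essentially a direct diagonalisation using the previously established machinery.
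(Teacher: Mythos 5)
Your proposal is correct and follows essentially the same route as the paper: apply \autoref{prop-E}~\ref{prop-E--c} along a subsequence realising each accumulation point, and for the converse pass to a further convergent subsequence of the bounded $(\alpha_i)$ and invoke uniqueness of pointed \GH limits (\autoref{lem_GH:GH_limit_unique_up_to_pointed_isometry}). Your remark on the edge case $\alpha=0$ is a fair observation the paper passes over silently (its rescaling machinery assumes $\alpha>0$), but it does not change the argument.
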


\begin{proof}
 Let $\alpha$ be an accumulation point of $(\alpha_i)_{i \in \nn}$ 
 and $(\alpha_{i_j})_{j \in \nn}$ be the subsequence converging to $\alpha$.
 Then $(X_{i_j},p_{i_j}) \to (X, p)$, and by \autoref{prop-E}, 
 \[(\alpha_{i_j} X_{i_j},p_{i_j}) \to (\alpha X, p).\]

 Now let $(Y,y)$ be a sublimit of $(\alpha_i X_i,p_i)$, 
 i.e.~$(\alpha_{i_j} X_{i_j},p_{i_j}) \to (Y,y)$ for some subsequence $(i_j)_{j \in \nn}$.
 Since $(\alpha_{i_j})_{j \in \nn}$ is a bounded sequence, 
 there exists a convergent subsequence $(\alpha_{i_{j_l}})_{l \in \nn}$ with limit $\alpha$.
 For this subsequence, $(\alpha_{i_{j_l}} X_{i_{j_l}},p_{i_{j_l}}) \to (Y,y)$, 
 and $(\alpha_{i_{j_l}} X_{i_{j_l}},p_{i_{j_l}}) \to (\alpha X, p)$ by the first part.
 Thus, $(Y,y)$ is isometric to $(\alpha X, p)$ 
 for an accumulation point $\alpha$ of $(\alpha_i)_{i \in \nn}$.
\end{proof}


\subsection{Convergence of points}\label{sec:GH-ncpt--convergence_of_points}

In the previous section, convergent sequences of pointed metric (length) spaces were studied.
Given such a sequence and using the corresponding approximations, 
a notion for convergence of points can be introduced. 

\begin{defn}\label{dfn:q_i->q}
 Let $(X,d_X,p)$ and $(X_i,d_{X_i},p_i)$, $i \in \nn$, be pointed length spaces.
 Assume $(X_i,p_i) \to (X,p)$ and let
 $\eps_i \to 0$ and 
 \[(f_i,g_i) \in \Isomp{\eps_i}{1/\eps_i}{X_i}{p_i}{X}{p}\]
 as in \autoref{cor:dgh_small_iff_eps_approx_noncompact}. 
 Let $q_i \in \B^{X_i}_{1/\eps_i}(p_i)$ and $q \in X$. 
 Then \emph{$q_i$ converges to $q$}, denoted by $q_i \to q$, 
 if $f_i(q_i)$ converges to $q$ (in $X$).
\end{defn}

For $(X_i, p_i) \to (X,p)$ as in the definition, $p_i \to p$ due to $f_i(p_i) = p$.
Moreover, for each $x \in X$ there exists such a sequence $x_i$ satisfying $x_i \to x$, 
e.g.~$x_i := g_i(x)$.


Convergence $q_i \to q$ depends on the choice of the underlying \GH approximations: 
Convergence with respect to one pair of approximations 
does not necessarily imply convergence for another, 
as the following example shows.

\begin{exm} 
 For $i \in \nn$, let $X_i = X = \mathbb{S}^2$ be the $2$-dimensional sphere, 
 $p_i=p=N$ the north pole and $q_i = q$ some fixed point on the equator.
 Let $\phi$ denote the rotation of $\mathbb{S}^2$ by $\frac{\pi}{2}$ fixing $p$
 and define $f_i = g_i = f_{2i}' = g_{2i}' = \id_{\mathbb{S}^2}$,
 $f_{2i+1}'=\phi$ and $g_{2i+1}' = \phi^{-1}$.
 
 Then both $(f_i,g_i)$ and $(f_i',g_i')$ are pointed isometries 
 between $(X_i,p_i)$ and $(X,p)$
 satisfying $f_i(q_i) = q$, but $f_{2i}'(q_{2i}) = q \neq \phi(q) = f_{2i+1}'(q_{2i+1})$.
 Hence, $f_i'(q_i)$ is not convergent at all, 
 but subconvergent with limits $q$ and $\phi(q)$.
\end{exm}

In this example, after replacing the approximations, two sublimits occur: 
One sublimit is the limit corresponding to the original approximations,
the other one is its image under an isometry of the limit space.
Since \GH convergence distinguishes spaces only up to isometry, 
concretely $(X,p) \cong (h(X),h(p)) = (X,h(p))$ for any isometry $h$, 
this can be interpreted as follows: 
If $q$ is a sublimit of $q_i$ with respect to one \GH approximation, 
then it is a sublimit for all \GH approximations.

This is a general fact as the subsequent lemma shows.
In order to prove this, the separability of a connected proper metric space is needed.
Though it is easy to see that such a space is separable, 
for completeness, the proof is given first.

\begin{lemma}\label{connected&proper=>separable}
 A connected proper metric space is separable.
\end{lemma}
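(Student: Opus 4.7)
The plan is to establish separability via a $\sigma$-compact exhaustion of $X$. First I fix any base point $p \in X$. Because distances in a metric space are finite, every point $x \in X$ lies in the closed ball $\B_n(p)$ for some $n \in \nn$, whence $X = \bigcup_{n \in \nn} \B_n(p)$. By properness each $\B_n(p)$ is compact, and every compact metric space is separable: for each $m \in \nn$ finitely many balls of radius $\tfrac{1}{m}$ cover $\B_n(p)$, and the union of their centres over all $m$ provides a countable dense subset $D_n \subseteq \B_n(p)$.

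Next I set $D := \bigcup_{n \in \nn} D_n$, which is a countable union of countable sets and hence countable. To verify density, let $x \in X$ and $\eps > 0$ be arbitrary. Choose $n \in \nn$ with $x \in \B_n(p)$; by density of $D_n$ in $\B_n(p)$ there exists $y \in D_n \subseteq D$ with $d_X(x,y) < \eps$. Therefore $D$ is dense in $X$, proving separability.

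I do not anticipate any substantial obstacle: the proof merely stitches together properness, separability of compact metric spaces, and the countability of countable unions of countable sets. In fact connectedness plays no role in this argument — any proper metric space is $\sigma$-compact and hence separable — so the hypothesis stated in the lemma is a little stronger than strictly necessary.
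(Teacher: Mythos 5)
Your proof is correct and follows essentially the same route as the paper: exhaust $X$ by countably many closed balls around a fixed base point, use properness to get compactness and hence separability of each ball, and take the countable union of the resulting dense subsets. Your observation that connectedness is never used is also accurate --- the paper's own proof does not invoke it either.
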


\begin{proof}  
 Let $(X,p)$ be a connected proper metric space and let $p \in X$ be arbitrary.
 Then \[X = \bigcup_{q \in \qq \cap (0,\infty)} \B_q(p).\]
 As a compact set, every $\B_q(p)$ is separable where $q \in \qq$ is positive. 
 Therefore, there exists a countable dense subset $A_q \subseteq \B_q(p)$.
 Let $A :=  \bigcup_{q \in \qq \cap (0,\infty)} A_q$.
 This $A$ is countable, and for arbitrary $x \in X$ 
 there is a positive $q \in \qq$ such that $x \in \B_q(p)$,
 i.e.~there exists a sequence $x_n \in A_q \subseteq A$ converging to $x$.
 Thus, $A$ is dense in $X$, hence, $X$ is separable.
\end{proof}

\begin{lemma}
 Let $(X,d_X,p)$ and $(X_i,d_{X_i},p_i)$, $i \in \nn$, be pointed length spaces.
 Assume $(X_i, p_i) \to (X,p)$ and let $\eps_i,\eps_i' \to 0$, $r_i,r_i' \to \infty$ and
 \begin{align*}
  (f_i,g_i) &\in \Isomp{\eps_i}{r_i}{X_i}{p_i}{X}{p},\\ 
  (f_i',g_i') &\in \Isomp{\eps_i'}{r_i'}{X_i}{p_i}{X}{p}.
 \end{align*} 
 Let $q_i \in \B_{\min\{r_i,r_i'\}}^{X_i}(p_i)$ and $q \in X$.
 If $f_i(q_i) \to q$ and $q'$ is an accumulation point of $f_i'(q_i)$, 
 then there exists an isometry $h : X \to X$ such that $h(q)=q'$.
\end{lemma}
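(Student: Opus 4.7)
The plan is to realize $h$ as a pointwise limit of the self-approximations $h_i := f_i' \circ g_i$. Since $r_i, r_i' \to \infty$, $h_i(x)$ is defined for each fixed $x \in X$ and $i$ large enough; moreover $h_i(p) = p$, the distortion of $h_i$ is at most $\eps_i + \eps_i'$, and $d_X(h_i(x),p) \leq d_X(x,p) + \eps_i + \eps_i'$, so the orbits of points stay in a bounded region.

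First I pass to the subsequence along which $f_i'(q_i) \to q'$. Since $X$ is a length space and proper, it is connected and hence separable by \autoref{connected&proper=>separable}. Pick a countable dense subset $\{x_j\}_{j \in \nn} \subseteq X$ with $x_0 = p$ and $x_1 = q$. For each $j$, $(h_i(x_j))_i$ is eventually bounded, hence by properness relatively compact in $X$. A Cantor diagonal argument produces a further subsequence along which $h_i(x_j) \to y_j \in X$ for every $j$. The distortion bound gives
\begin{align*}
 d_X(y_j, y_k) = \lim_{i \to \infty} d_X(h_i(x_j), h_i(x_k)) = d_X(x_j, x_k),
\end{align*}
so $x_j \mapsto y_j$ extends uniquely to an isometric embedding $h : X \to X$ with $h(p) = p$. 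To verify $h(q) = q'$, observe
\begin{align*}
 d_{X_i}(g_i(q), q_i)
 &\leq d_{X_i}(g_i(q), g_i \circ f_i(q_i)) + d_{X_i}(g_i \circ f_i(q_i), q_i) \\
 &\leq d_X(q, f_i(q_i)) + 2\eps_i \to 0,
\end{align*}
hence $d_X(h_i(q), f_i'(q_i)) \leq d_{X_i}(g_i(q), q_i) + \eps_i' \to 0$, which forces $h(q) = \lim_i h_i(q) = q'$.

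The main obstacle is surjectivity of $h$. To address it, I apply the analogous construction to $\bar h_i := f_i \circ g_i'$, enlarging the countable dense set used in the diagonal procedure to include $\{h(x_j) : j \in \nn\}$ and thinning the subsequence once more, to produce an isometric embedding $\bar h : X \to X$ with $\bar h(p) = p$. The key estimate
\begin{align*}
 d_X(\bar h_i \circ h_i(x), x)
 &\leq d_X(f_i \circ g_i' \circ f_i' \circ g_i(x), f_i \circ g_i(x)) + d_X(f_i \circ g_i(x), x) \\
 &\leq d_{X_i}(g_i' \circ f_i' \circ g_i(x), g_i(x)) + 2\eps_i \\
 &< \eps_i' + 2\eps_i \to 0
\end{align*}
then yields $\bar h \circ h = \id_X$ on the dense set, hence everywhere by continuity, and symmetrically $h \circ \bar h = \id_X$. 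Thus $h$ is a bijective isometry of $X$ with $h(q) = q'$, as required.
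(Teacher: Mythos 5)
Your proof is correct and follows essentially the same route as the paper: both form the self-maps $h_i = f_i' \circ g_i$ and $\bar h_i = f_i \circ g_i'$, verify that their distortion and the deviation of $\bar h_i \circ h_i$ from the identity tend to zero, extract pointwise sublimits on a countable dense subset by a diagonal argument, and identify the limits as mutually inverse isometries fixing $p$ before matching $h(q)$ with the limit of $f_i'(q_i)$. The only notable differences are that you explicitly pass first to the subsequence realizing the accumulation point $q'$ (a point the paper's write-up leaves implicit) and that you bypass the paper's preliminary reduction to $r_i = r_i'$ by observing that $h_i(x)$ is defined for all $i$ large depending on $x$; both choices are sound.
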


\begin{proof}
 Without loss of generality, let $r_i = r_i'$: 
 Otherwise, let $R_i := \min\{r_i,r_i'\}$ 
 and, by \autoref{lem:GHA_restrict_to_smaller_set_and_different_base_point} 
 and the construction in its proof,
 there are 
 \begin{align*}
  (\tilde{f}_i,\tilde{g}_i) &\in \Isomp{\eps_i}{R_i}{X_i}{p_i}{X}{p}\\
  (\tilde{f}_i',\tilde{g}_i') &\in \Isomp{\eps_i'}{R_i}{X_i}{p_i}{X}{p}
 \end{align*}
 with
 \begin{align*}
  \tilde{f}_i(q_i) \to q &\text{ if and only if } f_i(q_i) \to q,\\
  \tilde{f}'_i(q_i) \to q &\text{ if and only if } f_i'(q_i) \to q.
 \end{align*}
 Define $h_i, \bar{h}_i: \B_{r_i}^X(p) \to \B_{r_i}^X(p)$ by 
 \[h_i := f_i' \circ g_i \quad\aand\quad \bar{h}_i := f_i \circ g_i'.\]
 In particular, $h_i(p) = \bar{h}_i(p) = p$.
 For any $x,x' \in \B_{r_i}^X(p)$,
 \begin{align*}
  &|d_X(h_i(x),h_i(x')) - d_X(x,x')|\\
  &\leq |d_X(f_i'(g_i(x)),f_i'(g_i(x'))) - d_{X_i}(g_i(x),g_i(x'))| 
  \\&\quad+ |d_{X_i}(g_i(x),g_i(x')) - d_X(x,x')| \\
  &\leq \eps_i' + \eps_i \to 0.
  \intertext{Analogously, $|d_X(\bar{h}_i(x),\bar{h}_i(x')) - d_X(x,x')| \to 0$. Moreover,}
  &d_X(\bar{h}_i \circ h_i (x),x)\\
  &= d_X(f_i \circ g_i' \circ f_i' \circ g_i (x),x) \\
  &\leq d_{X_i}(g_i \circ f_i \circ g_i' \circ f_i' \circ g_i (x),g_i(x)) + \eps_i \\
  &\leq d_{X_i}(g_i' \circ f_i' \circ g_i (x),g_i(x)) + 2\eps_i \\
  &\leq d_{X_i}(g_i (x),g_i(x)) + 2\eps_i + \eps_i' \to 0,
 \end{align*}
 and analogously, $d_X(h_i \circ \bar{h}_i (x),x) \to 0$.
 Hence, if the $h_i$ and $\bar{h}_i$ (sub)converge (in some sense), 
 their corresponding (sub)limits are isometries fixing $p$ with $\bar{h} = h^{-1}$.
 
 The idea for proving subconvergence is to choose a countable dense subset $A \subseteq X$, 
 to define the sublimit of all $h_i(a)$ where $a \in A$ 
 and to extend this limit to a continuous map on $X$.
 Doing the same simultaneously for $\bar{h}_i$ 
 gives another sublimit that turns out to be the inverse of the first.
 In the end, identifying $X$ with itself using this isometry proves the claim.
 
 Choose a countable dense subset $A = \{a_n \mid n \in \nn\} \subseteq X$
 (cf.~\autoref{connected&proper=>separable}) 
 and, for $i$ large enough such that $d_X(a_n,p) \leq r_i$, 
 define $z_n^i := h_i(a_n)$ and $\bar{z}_n^i := \bar{h}_i(a_n)$. 
 Since \[d_X(z_n^i, p) = d_X(h_i(a_n),h_i(p)) \to d_X(a_n,p),\]
 the sequence $(d(z_n^i, p))_{i \in \nn}$ is bounded from above by some $R > 0$. 
 Hence, $z_n^i$ is contained in $\B_R^X(p)$, 
 and therefore, has a convergent subsequence. 
 An analogous argument proves subconvergence for $(\bar{z}_n^i)_{i \in \nn}$.
 Thus, using a diagonal argument, 
 there is a subsequence $(i_j)_{j \in \nn}$ such that 
 for any $n \in \nn$ the sequences $(z_n^{i_j})_{j \in \nn}$ and $(\bar{z}_n^{i_j})_{j \in \nn}$,
 respectively, converge to some $z_n \in X$ and $\bar{z}_n \in X$, respectively.
 
 Define $h(a_n) := z_n$ and $\bar{h}(a_n) := \bar{z}_n$. 
 In particular, 
 \[d_X(h(a_n),h(a_m)) = d_X(a_n,a_m) = d_X(\bar{h}(a_n),\bar{h}(a_m))\] 
 for all $n,m \in \nn$.
 For arbitrary $x \in X$, 
 choose a Cauchy sequence $(a_{n_k})_{k \in \nn}$ in $A$ converging to $x$
 and let 
 \[
  h(x) := \lim_{k \to \infty} h(a_{n_k}) 
  \quad\aand\quad 
  \bar{h}(x) := \lim_{k \to \infty} \bar{h}(a_{n_k}).
 \]
 In fact, for any $k \in \nn$,
 \begin{align*}
  &d_X(h_{i_j}(x),h(x))\\
  &\leq d_X(h_{i_j}(x), h_{i_j}(a_{n_k})) 
         + d_X(h_{i_j}(a_{n_k}),h(a_{n_k})) 
         + d_X(h(a_{n_k}), h(x))\\
  &\leq d_X(x, a_{n_k}) + \eps_{i_j} + \eps_{i_j}' 
         + d_X(h_{i_j}(a_{n_k}),h(a_{n_k})) 
         + d_X(h(a_{n_k}), h(x))\\
  &\to d_X(x, a_{n_k}) + d_X(h(a_{n_k}), h(x)) \as {j \to \infty}.
  \end{align*}
 Since this holds for every $k \in \nn$ 
 and $d_X(x, a_{n_k}) + d_X(h(a_{n_k}), h(x)) \to 0$ as $k \to \infty$, 
 \begin{align*}
  h_{i_j}(x) \to h(x) \as {j \to \infty}. 
 \end{align*}
 Analogously,
  $\bar{h}_{i_j}(x) \to \bar{h}(x) \as {j \to \infty}$.
 In particular, $\bar{h}_{i_j} \circ h_{i_j} \to \bar{h} \circ h$ and vice versa.
 Thus, $h$ is an isometry on $X$ with inverse $\bar{h}$.
 
 Now let $f_i(q_i) \to q$. Then 
 \begin{align*}
  d_X(f_{i_j}'(q_{i_j}),h(q)) 
  &\leq d_{X_{i_j}}(g_{i_j}' \circ f_{i_j}'(q_{i_j}),g_{i_j}' \circ h(q)) + \eps_{i_j}'\\
  &\leq d_{X_{i_j}}(q_{i_j},g_{i_j}' \circ h(q)) + 2 \eps_{i_j}'\\
  &\leq d_X(f_{i_j}(q_{i_j}),f_{i_j} \circ g_{i_j}' \circ h(q)) + 2 \eps_{i_j}' + \eps_{i_j}\\
  &\leq d_X(f_{i_j}(q_{i_j}),q) + d_X(q,\bar{h}_{i_j}' \circ h(q)) + 2 \eps_{i_j}' + \eps_{i_j}\\
  &\to 0 \as {j \to \infty}.
 \end{align*}
 This proves
 $f_{i_j}'(q_{i_j}) \to h(q) \as {j \to \infty}$.
\end{proof}


The following statements allow to change the base points of a given convergent sequence.

\begin{prop}\label{(X_i,p_i)->(X,p),q_i->q=>(X_i,q_i)->(X,q)}
 Let $(X,d_X,p)$ and $(X_i,d_{X_i},p_i)$, $i \in \nn$, be pointed length spaces,
 and let $q_i \in X_i$ and $q \in X$. 
 If $(X_i,p_i) \to (X,p)$ and $q_i \to q$, then $(X_i,q_i) \to (X,q)$.
\end{prop}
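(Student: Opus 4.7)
The plan is to reduce to the change-of-basepoint lemma \autoref{lem:GHA_restrict_to_smaller_set_and_different_base_point}\ref{lem:GHA_restrict_to_smaller_set_and_different_base_point--a}, using the approximations provided by $(X_i,p_i)\to(X,p)$ together with the definition $f_i(q_i)\to q$ to verify its hypotheses.

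First, I would invoke \autoref{cor:dgh_small_iff_eps_approx_noncompact} to fix sequences $\eps_i\to 0$ and approximations $(f_i,g_i)\in\Isomp{\eps_i}{1/\eps_i}{X_i}{p_i}{X}{p}$ with respect to which $f_i(q_i)\to q$. The next preparatory step is to control the two error quantities that will appear as \myquote{$\delta$} in the cited lemma. By assumption $d_X(f_i(q_i),q)\to 0$, and for the symmetric quantity one computes
\[
 d_{X_i}(q_i,g_i(q))
 \leq d_{X_i}(q_i,g_i\circ f_i(q_i)) + d_{X_i}(g_i\circ f_i(q_i),g_i(q))
 < 2\eps_i + d_X(f_i(q_i),q),
\]
so that $\delta_i:=\max\{d_X(f_i(q_i),q),d_{X_i}(q_i,g_i(q))\}\to 0$. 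Using that $f_i$ has distortion less than $\eps_i$, one also gets $d_{X_i}(p_i,q_i)\to d_X(p,q)=:D$.

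Now fix an arbitrary $r>0$; the goal is to show $\dghp{r}{X_i}{q_i}{X}{q}\to 0$. For $i$ large enough, $1/\eps_i$ exceeds $r+D+1$ and $r+d_{X_i}(p_i,q_i)+1$, which guarantees both inclusions
\[
 \B_r^{X_i}(q_i)\subseteq \B_{1/\eps_i}^{X_i}(p_i)
 \quad\aand\quad
 \B_r^X(q)\subseteq \B_{1/\eps_i}^X(p)
\]
needed to apply \autoref{lem:GHA_restrict_to_smaller_set_and_different_base_point}\ref{lem:GHA_restrict_to_smaller_set_and_different_base_point--a} with parameters $R=1/\eps_i$, new base points $q_i$ and $q$, and error $\delta=\delta_i$. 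The lemma then yields
\[
 \dghp{r}{X_i}{q_i}{X}{q} \leq 8\eps_i + 2\delta_i \to 0.
\]
Since $r>0$ was arbitrary, this gives $(X_i,q_i)\to(X,q)$.

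I do not expect any real obstacle: once the definition of $q_i\to q$ is unpacked to supply smallness of $d_X(f_i(q_i),q)$, everything is bookkeeping that was already encoded in \autoref{lem:GHA_restrict_to_smaller_set_and_different_base_point}. The only point where a little care is needed is checking that the ambient radii $1/\eps_i$ are eventually large enough to contain the $r$-balls around the new base points; this is immediate from boundedness of $d_{X_i}(p_i,q_i)$, which itself follows from $f_i(q_i)\to q$.
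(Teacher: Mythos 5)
Your proposal is correct and follows essentially the same route as the paper's proof: unpack \autoref{cor:dgh_small_iff_eps_approx_noncompact} and the definition of $q_i \to q$, show $\delta_i = \max\{d_X(f_i(q_i),q),\, d_{X_i}(q_i,g_i(q))\} \to 0$, verify the ball inclusions using the boundedness of $d_{X_i}(p_i,q_i)$, and apply \autoref{lem:GHA_restrict_to_smaller_set_and_different_base_point} to get $\dghp{r}{X_i}{q_i}{X}{q} \leq 8\eps_i + 2\delta_i \to 0$. All the steps, including the intermediate estimates, match the paper's argument.
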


\begin{proof}
 The proof is an immediate consequence of 
 \autoref{lem:GHA_restrict_to_smaller_set_and_different_base_point} 
 and \autoref{prop:dgh_small_iff_eps_approx_noncompact}:
 Choose $\eps_i \to 0$ and $(f_i,g_i) \in \Isomp{\eps_i}{1/\eps_i}{X_i}{p_i}{X}{p}$ 
 as in \autoref{dfn:q_i->q} with $f_i(q_i) \to q$.
 In particular,
 \[
  d_{X_i}(q_i,g_i(q)) 
  \leq \eps_i + d_X(f_i(q_i), f_i(g_i(q))) 
  \leq 2 \eps_i + d_X(f_i(q_i), q) 
  \to 0.
 \]
 Hence, $\delta_i := \max\{d_X(f_i(q_i),q),d_{X_i}(q_i,g_i(q))\} \to 0$. 
 
 Since $f_i(p_i) = p$,
 \[ 
  d_{X_i}(p_i,q_i) 
  \leq \eps_i + d_X(p,q) + d_X(q,f_i(q_i)) 
  \to d_X(p,q).
 \] 
 Let $r > 0$ be arbitrary. 
 Fix $i$ large enough such that 
 $2(r + d_X(p,q)) \leq \frac{1}{\eps_i}$
 and such that $d_{X_i}(p_i,q_i) \leq 2 d_X(p,q)$ or $d_{X_i}(p_i,q_i) \leq r$, respectively,
 if $p \neq q$ or $p=q$, respectively. 
 In particular, 
 \begin{align*}
  &\B^{X_i}_r(q_i) 
   \subseteq \B^{X_i}_{r + d_{X_i}(p_i,q_i)}(p_i) 
   \subseteq \B^{X_i}_{1/\eps_i}(p_i), 
  \\
  &\B^X_r(q) 
   \subseteq \B^X_{r + d_X(p,q)}(p) 
   \subseteq \B^X_{1/\eps_i}(p)
 \end{align*}
 and $\Isomp{4\eps_i + \delta_i}{r}{X_i}{q_i}{X}{q} \ne \emptyset$ 
 by \autoref{lem:GHA_restrict_to_smaller_set_and_different_base_point}.
 By \autoref{dgh_small_iff_eps_approx}, 
 \[
  \dghp{r}{X_i}{q_i}{X}{q} 
  \leq 8\eps_i + 2 \delta_i 
  \to 0,
 \]
 and \autoref{prop:dgh_small_iff_eps_approx_noncompact} implies the claim.
\qedhere
\end{proof}

\begin{cor}
 Let $(X,d_X,p)$ and $(X_i,d_{X_i},p_i)$, $i \in \nn$, be pointed length spaces.
 Let $q_i \in X_i$ with $d_{X_i}(p_i,q_i) \to 0$.
 Assume $(X_i,p_i) \to (X,p)$. 
 Then $(X_i,q_i) \to (X,p)$.
\end{cor}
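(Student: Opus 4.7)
The plan is to reduce this to \autoref{(X_i,p_i)->(X,p),q_i->q=>(X_i,q_i)->(X,q)} by showing that $q_i \to p$ in the sense of \autoref{dfn:q_i->q}, i.e.\ with limit point equal to the original base point $p$.

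First I would invoke \autoref{cor:dgh_small_iff_eps_approx_noncompact} to pick a null sequence $\eps_i \to 0$ and approximations
\[(f_i,g_i) \in \Isomp{\eps_i}{1/\eps_i}{X_i}{p_i}{X}{p}.\]
Since $d_{X_i}(p_i,q_i) \to 0$, for all sufficiently large $i$ we have $d_{X_i}(p_i,q_i) < 1/\eps_i$, so $q_i \in \B^{X_i}_{1/\eps_i}(p_i)$ and $f_i(q_i)$ is defined. Using $f_i(p_i)=p$ together with the distortion estimate for $f_i$, I would then compute
\[d_X(f_i(q_i),p) = d_X(f_i(q_i),f_i(p_i)) < d_{X_i}(q_i,p_i) + \eps_i \to 0,\]
which by \autoref{dfn:q_i->q} means exactly that $q_i \to p$.

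Finally, applying \autoref{(X_i,p_i)->(X,p),q_i->q=>(X_i,q_i)->(X,q)} with $q=p$ yields $(X_i,q_i) \to (X,p)$, as required. There is essentially no obstacle here: the argument only uses the almost-isometry property of $f_i$ on the relevant ball plus the previous proposition, so the only small subtlety is checking that $q_i$ eventually lies in the domain $\B^{X_i}_{1/\eps_i}(p_i)$ of $f_i$, which is immediate from $d_{X_i}(p_i,q_i)\to 0$.
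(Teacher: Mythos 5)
Your argument is correct and coincides with the paper's own proof: both choose $\eps_i \to 0$ and $(f_i,g_i)$ via \autoref{cor:dgh_small_iff_eps_approx_noncompact}, estimate $d_X(f_i(q_i),p) \leq d_{X_i}(q_i,p_i) + \eps_i \to 0$ to conclude $q_i \to p$, and then invoke \autoref{(X_i,p_i)->(X,p),q_i->q=>(X_i,q_i)->(X,q)}. Your additional remark that $q_i$ eventually lies in $\B^{X_i}_{1/\eps_i}(p_i)$ is a sensible (if minor) point of care that the paper leaves implicit.
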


\begin{proof}
 Choose $\eps_i \to 0$ and $(f_i,g_i) \in \Isomp{\eps_i}{1/\eps_i}{X_i}{p_i}{X}{p}$ 
 as in \autoref{cor:dgh_small_iff_eps_approx_noncompact}.
 Then 
 \[
  d_X(f_i(q_i),p) 
  = d_X(f_i(q_i),f_i(p)) 
  \leq d_{X_i}(q_i,p_i) + \eps_i 
  \to 0.
 \] 
 Hence, $q_i \to p$, 
 and \autoref{(X_i,p_i)->(X,p),q_i->q=>(X_i,q_i)->(X,q)} implies the claim.
\end{proof}

\begin{cor}
 Let $(X,d_X,p)$ and $(X_i,d_{X_i},p_i)$, $i \in \nn$, be pointed length spaces.
 Let $q_i \in X_i$ with $d_{X_i}(p_i,q_i) \leq C$ for some $C>0$.
 If $(X_i,p_i) \to (X,p)$, 
 then there exists $q \in X$ such that $(X_i,q_i)$ subconverges to $(X,q)$.
\end{cor}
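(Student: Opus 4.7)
The plan is to use the \GH approximations between $(X_i,p_i)$ and $(X,p)$ to push $q_i$ into $X$, extract a convergent subsequence there using properness of $X$, and then appeal to \autoref{(X_i,p_i)->(X,p),q_i->q=>(X_i,q_i)->(X,q)}.

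First I would invoke \autoref{cor:dgh_small_iff_eps_approx_noncompact} to fix a null sequence $\eps_i \to 0$ and pointed approximations
\[(f_i,g_i) \in \Isomp{\eps_i}{1/\eps_i}{X_i}{p_i}{X}{p}.\]
For $i$ large enough so that $C \leq \frac{1}{\eps_i}$, the assumption $d_{X_i}(p_i,q_i) \leq C$ guarantees $q_i \in \B_{1/\eps_i}^{X_i}(p_i)$, so $f_i(q_i) \in X$ is well defined. Using $f_i(p_i) = p$ and that $f_i$ has distortion less than $\eps_i$,
\[ d_X(f_i(q_i),p) = d_X(f_i(q_i),f_i(p_i)) \leq d_{X_i}(q_i,p_i) + \eps_i \leq C + \eps_i. \]

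Consequently the sequence $(f_i(q_i))_{i \in \nn}$ eventually lies inside the closed ball $\B^X_{C+1}(p)$. Since $X$ is proper (all metric spaces in this section are assumed to be proper), this closed ball is compact, so a subsequence $(f_{i_j}(q_{i_j}))_{j \in \nn}$ converges to some $q \in \B^X_{C+1}(p) \subseteq X$. By \autoref{dfn:q_i->q}, this means exactly that $q_{i_j} \to q$ along the subsequence.

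Now I would apply \autoref{(X_i,p_i)->(X,p),q_i->q=>(X_i,q_i)->(X,q)} to the subsequence $(X_{i_j},p_{i_j}) \to (X,p)$ together with $q_{i_j} \to q$ to conclude $(X_{i_j},q_{i_j}) \to (X,q)$, which is the desired subconvergence. There is no real obstacle here; the only subtlety is making sure that $q_i$ lies in the domain $\B_{1/\eps_i}^{X_i}(p_i)$ of $f_i$, which is handled by passing to the tail of indices with $C \leq 1/\eps_i$, and that properness of $X$ is available to extract the subsequential limit of $f_i(q_i)$.
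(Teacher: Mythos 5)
Your proposal is correct and follows essentially the same route as the paper: fix the approximations from \autoref{cor:dgh_small_iff_eps_approx_noncompact}, bound $d_X(f_i(q_i),p)$ by $C+\eps_i$ so that $f_i(q_i)$ eventually lies in a compact ball, extract a convergent subsequence with limit $q$, and conclude via \autoref{(X_i,p_i)->(X,p),q_i->q=>(X_i,q_i)->(X,q)}. The only difference is cosmetic (you use the ball $\B^X_{C+1}(p)$ where the paper takes an arbitrary $R>C$), and your explicit remark about ensuring $q_i$ lies in the domain of $f_i$ is a correct, if minor, point the paper leaves implicit.
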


\begin{proof}
 Let $\eps_i \to 0$ and $(f_i,g_i) \in \Isomp{\eps_i}{1/\eps_i}{X_i}{p_i}{X}{p}$ 
 be as in \autoref{cor:dgh_small_iff_eps_approx_noncompact}.
 For $R > C$ there is $i_0 > 0$ such that $C + \eps_i \leq R$ for all $i \geq i_0$. 
 Therefore, $f_i(q_i) \in \B_R(p)$ for all $i \geq i_0$. 
 Since this ball is compact, 
 there exists a convergent subsequence with limit $q \in \B_R(p)$.
 After passing to this subsequence, $q_i \to q$, 
 and \autoref{(X_i,p_i)->(X,p),q_i->q=>(X_i,q_i)->(X,q)} implies the claim.
\end{proof}


\subsection{Convergence of maps}\label{sec:GH-ncpt--convergence_of_maps}

So far, statements about the convergence of metric spaces and points were made. 
But even statements about maps between those convergent space are possible: 
In fact, Lipschitz maps (sub)con\-verge (in some sense) to Lipschitz maps.
The proof of this seems to be rather technical, 
but in fact essentially only uses the same methods 
one can use to prove convergence of compact subsets (without bothering \precptnessThm).
Therefore, a proof of the latter is given in advance 
after establishing the following (technical) lemma.

\begin{lemma}\label{lem_GH:similar_isometries_imply_convergence}
 Let $(X,d_X,p)$ and $(X_i,d_{X_i},p_i)$, $i \in \nn$, be pointed length spaces. 
 Assume $(X_i,p_i) \to (X,p)$ and
 let $\eps_i \to 0$ 
 and \[(f_i,g_i) \in \Isomp{\eps_i}{1/\eps_i}{X_i}{p_i}{X}{p}\] 
 be as in \autoref{cor:dgh_small_iff_eps_approx_noncompact}.
 Moreover, let $A_i \subseteq B_{1/\eps_i}^{X_i}(p_i)$ and $A \subseteq X$ be compact 
 and $f_i': A_i \to A$, $g_i' : A \to A_i$ and $\delta_i \to 0$ satisfy
 \begin{align*}
  d_X(f_i'(x_i),f_i(x_i)) \leq \delta_i \quad\aand\quad
  d_{X_i}(g_i'(x),g_i(x)) \leq \delta_i
 \end{align*}
 for all $x_i \in A_i$ and $x \in A$.
 Then $A_i \to A$.
\end{lemma}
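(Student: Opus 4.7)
The plan is to show that the maps $(f_i', g_i')$ are themselves $\eps$-approximations between the compact spaces $A_i$ and $A$ (with respect to the restricted metrics) for some $\eps = \eps(\eps_i, \delta_i) \to 0$, and then conclude via \autoref{dgh_small_iff_eps_approx}\ref{dgh_small_iff_eps_approx--b} that $\dGH(A_i,A) \to 0$, which is exactly $A_i \to A$ in the compact \GH sense.

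First I would verify that $f_i'$ has small distortion: for $x_i, y_i \in A_i$, insert $f_i(x_i), f_i(y_i)$ via the triangle inequality to obtain
\[
  |d_X(f_i'(x_i), f_i'(y_i)) - d_{X_i}(x_i,y_i)|
  \leq 2\delta_i + |d_X(f_i(x_i),f_i(y_i)) - d_{X_i}(x_i,y_i)|
  < 2\delta_i + \eps_i,
\]
using the hypothesis $d_X(f_i'(\cdot),f_i(\cdot)) \leq \delta_i$ and the fact that $f_i$ is an $\eps_i$-approximation on the ball of radius $1/\eps_i$ containing $A_i$. An entirely analogous computation bounds the distortion of $g_i'$ by $2\delta_i + \eps_i$.

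Next I would check the two \myquote{approximate inverse} conditions. For $x \in A$, write
\[
  d_X(f_i' \circ g_i'(x), x)
  \leq d_X(f_i'(g_i'(x)), f_i(g_i'(x)))
     + d_X(f_i(g_i'(x)), f_i(g_i(x)))
     + d_X(f_i \circ g_i(x), x).
\]
The first summand is at most $\delta_i$ by hypothesis, the last is $<\eps_i$ since $(f_i,g_i)$ is an $\eps_i$-approximation, and the middle one is at most $d_{X_i}(g_i'(x), g_i(x)) + \eps_i \leq \delta_i + \eps_i$ since $f_i$ has distortion $<\eps_i$. Hence the left-hand side is $< 2\delta_i + 2\eps_i$. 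The condition $d_{X_i}(g_i' \circ f_i'(x_i), x_i) < 2\delta_i + 2\eps_i$ is proven by the same pattern (inserting $g_i \circ f_i'(x_i)$ and $g_i \circ f_i(x_i)$ and using the distortion of $g_i$).

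Combining these four estimates shows $(f_i', g_i') \in \Isom{2\delta_i + 2\eps_i}(A_i, A)$, and then \autoref{dgh_small_iff_eps_approx}\ref{dgh_small_iff_eps_approx--b} yields $\dGH(A_i, A) \leq 4\delta_i + 4\eps_i \to 0$. There is no real obstacle here — the proof is pure bookkeeping of triangle inequalities — the only mild subtlety is making sure that all three ingredients (distortion of $f_i$, distortion of $g_i$, and the compositional near-identity of $f_i \circ g_i$ and $g_i \circ f_i$) are applied on points that lie in $\B_{1/\eps_i}^{X_i}(p_i)$ or $\B_{1/\eps_i}^{X}(p)$ respectively, which is guaranteed by the assumption $A_i \subseteq \B_{1/\eps_i}^{X_i}(p_i)$ and the fact that $f_i, g_i$ map these balls into each other.
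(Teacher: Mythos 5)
Your proposal is correct and follows essentially the same route as the paper: both verify that $(f_i',g_i')$ are $2(\eps_i+\delta_i)$-approximations by inserting $f_i$, $g_i$ via the triangle inequality in exactly the four required estimates, and then conclude convergence from the correspondence between approximations and \GH distance.
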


\begin{proof}
 Prove $(f_i',g_i') \in \Isom{2(\eps_i + \delta_i)}(A_i,A)$: 
 For $x_i^1, x_i^2 \in A_i$,
 \begin{align*}
  &|d_{X}(f_i'(x_i^1), f_i'(x_i^2)) - d_{X_i}(x_i^1, x_i^2)|
  \\& \leq |d_{X}(f_i'(x_i^1), f_i'(x_i^2)) - d_{X}(f_i(x_i^1), f_i(x_i^2))| 
  \\&\quad  + |d_{X}(f_i(x_i^1), f_i(x_i^2)) - d_{X_i}(x_i^1, x_i^2)|
  \\& < d_X(f_i'(x_i^1), f_i(x_i^1)) + d_X(f_i'(x_i^2),f_i(x_i^2)) + \eps_i
  \\&\leq \eps_i + 2 \delta_i.
 \end{align*}
 Analogously, $|d_{X_i}(g_i'(x^1), g_i'(x^2)) - d_{X}(x^1, x^2)| < \eps_i + 2 \delta_i$ 
 for all $x^1,x^2 \in A$.
 Moreover, for $x_i \in A_i$,
 \begin{align*}
  &d_{X_i}(g_i' \circ f_i' (x_i),x_i))
  \\& \leq d_{X_i}(g_i' \circ f_i' (x_i),g_i \circ f_i' (x_i)) 
  \\&\quad + d_{X_i}(g_i \circ f_i' (x_i),g_i \circ f_i(x_i)) + d_{X_i}(g_i \circ f_i(x_i),x_i)
  \\& < \delta_i + (d_{X_i}(f_i' (x_i),f_i(x_i)) + \eps_i) + \eps_i
  \\& \leq 2(\eps_i + \delta_i),
 \end{align*}
 and analogously, $d_{X}(f_i' \circ g_i'(x),x)) < 2(\eps_i + \delta_i)$ for all $x \in A$.
\end{proof}

\begin{prop}\label{lem_GH:compact_subets_converge}
 Let $(X,d_X,p)$ and $(X_i,d_{X_i},p_i)$, $i \in \nn$, be length spaces 
 such that $(X_i,p_i) \to (X,p)$
 and let $\eps_i \to 0$ and 
 \[(f_i,g_i) \in \Isomp{\eps_i}{1/\eps_i}{X_i}{p_i}{X}{p}\] 
 be as in \autoref{cor:dgh_small_iff_eps_approx_noncompact}.
 Let $K_i \in X_i$ be compact with $K_i \subseteq \B_R^{X_i}(p_i)$ for some $R > 0$.
 After passing to a subsequence, 
 there exists $K \subseteq \B_r(p)$ such that $K_i$ subconverges to $K$.
\end{prop}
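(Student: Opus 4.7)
The plan is to push the $K_i$ forward to the limit space via $f_i$, extract a Hausdorff sublimit $K$ there (which is possible because $X$ is proper and $f_i(K_i)$ lies in a fixed compact ball), and then invoke \autoref{lem_GH:similar_isometries_imply_convergence} to upgrade this Hausdorff sublimit to the desired Gromov--Hausdorff sublimit.

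Concretely, choose $i$ large enough that $R + \eps_i < \frac{1}{\eps_i}$. Then $f_i(K_i) \subseteq \B^X_{R+\eps_i}(p) \subseteq \B^X_{R+1}(p)$, and this ball is compact because $X$ is proper. Let $C_i := \overline{f_i(K_i)}$ denote the closure, which is compact. By Blaschke's selection theorem, the hyperspace of compact subsets of $\B^X_{R+1}(p)$ is compact in the Hausdorff metric; passing to a subsequence, $C_i$ converges in Hausdorff distance to a compact set $K \subseteq \B^X_R(p)$. Since $f_i(K_i)$ is dense in $C_i$, also $h_i := \dH^X(f_i(K_i), K) \to 0$.

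Next, construct maps $f_i': K_i \to K$ and $g_i': K \to K_i$ close to $f_i$ and $g_i$, respectively. For each $x \in K_i$, let $f_i'(x) \in K$ be any point realising (or almost realising) the distance from $f_i(x)$ to $K$; then $d_X(f_i'(x), f_i(x)) \leq h_i$. For each $y \in K$, since $K \subseteq B^X_{h_i + 1/i}(f_i(K_i))$, one may choose $g_i'(y) \in K_i$ with $d_X(f_i(g_i'(y)), y) \leq h_i + \tfrac{1}{i}$. Using the two defining properties of the $(\eps_i)$-approximation $(f_i, g_i)$,
\begin{align*}
 d_{X_i}(g_i'(y), g_i(y))
 &\leq d_{X_i}\bigl(g_i'(y), g_i(f_i(g_i'(y)))\bigr) + d_{X_i}\bigl(g_i(f_i(g_i'(y))), g_i(y)\bigr) \\
 &< \eps_i + \bigl(d_X(f_i(g_i'(y)), y) + \eps_i\bigr) \\
 &\leq 2\eps_i + h_i + \tfrac{1}{i}.
\end{align*}
Setting $\delta_i := \max\{h_i,\, 2\eps_i + h_i + \tfrac{1}{i}\} \to 0$, the hypotheses of \autoref{lem_GH:similar_isometries_imply_convergence} are satisfied with $A_i := K_i$ and $A := K$, so $K_i \to K$.

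The main obstacle is arranging the maps $f_i'$ and $g_i'$ so that their images actually lie in $K$ and $K_i$, respectively, while remaining close to $f_i$ and $g_i$; this is handled by using the Hausdorff closeness $h_i \to 0$ of $f_i(K_i)$ to $K$ in $X$ in both directions. Once this is set up, \autoref{lem_GH:similar_isometries_imply_convergence} does all the remaining work.
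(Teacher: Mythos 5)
Your proposal is correct, and it follows the same overall architecture as the paper's proof: push $K_i$ into the limit space via $f_i$, produce a candidate limit set $K$ inside the fixed compact ball $\B_{R+1}^X(p)$, build maps $f_i',g_i'$ uniformly close to $f_i,g_i$, and let \autoref{lem_GH:similar_isometries_imply_convergence} finish. The one genuine difference is how $K$ and the maps are obtained. The paper works by hand: it fixes countable dense subsets $A_i=\{a_i^n\}\subseteq K_i$, runs a diagonal argument to get limits $y_n$ of $f_i(a_i^n)$ with explicit rate $d_X(f_i(a_i^n),y_n)<\eps_i/4$, sets $K:=\overline{\{y_n\}}$, and defines $f_i',g_i'$ through the index correspondence $a_i^n\leftrightarrow y_n$. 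You instead invoke Blaschke's selection theorem (compactness of the hyperspace of compact subsets of $\B_{R+1}^X(p)$ in the Hausdorff metric, cf.\ \cite[Theorem 7.3.8]{burago}) to extract a Hausdorff sublimit $K$ of $\overline{f_i(K_i)}$, and define $f_i',g_i'$ by (almost) nearest-point selection using $h_i=\dH^{d_X}(f_i(K_i),K)\to 0$. This is cleaner and shorter, at the cost of citing an external compactness result where the paper deliberately keeps everything self-contained; your error estimates ($\delta_i=\max\{h_i,\,2\eps_i+h_i+\tfrac1i\}$ versus the paper's $\tfrac32\eps_i$ and $\tfrac52\eps_i$) are checked correctly, and your containment $K\subseteq\B_R^X(p)$ is in fact slightly sharper than the paper's $K\subseteq\B_{R+1}^X(p)$.
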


\begin{proof}
 Without loss of generality, 
 assume $R \leq \frac{1}{\eps_i}$ and $\eps_i \leq 1$ for all $i \in \nn$.
 
 Let $x_i \in K_i \subseteq \B_R^{X_i}(p_i)$ be arbitrary. 
 Then $f_i(x_i) \in B_{R + \eps_i}^X(p) \subseteq \B_{R+1}^X(p)$.
 Hence, the sequence $(f_i(x_i))_{i \in \nn}$ is contained in a compact set, 
 and therefore has a convergent subsequent.
 Unfortunately, for different choices of $x_i$ different subsequences might converge.
 Therefore, a diagonal argument on countable dense subsets of the $K_i$ will be used.
 
 Let $A_i = \{a_i^n \mid n \in \nn\} \subseteq K_i$ be a countable dense subset. 
 As seen above, the sequence $(f_i(a_i^n))_{i \in \nn}$, where $n \in \nn$, 
 has a convergent subsequence with limit $y_n \in \B_{R+1}^X(p)$.
 Moreover, this subsequence can be chosen such that, 
 after passing to this subsequence, 
 $d_X(f_i(a_i^n),y_n) < \frac{\eps_i}{4}$.
 By a diagonal argument, 
 there exists a common subsequence 
 such that for every $n \in \nn$ there is $y_n \in \B_{R+1}(p)$ 
 with $d_X(f_i(a_i^n),y_n) < \frac{\eps_i}{4}$ for all $i \in \nn$.
 Pass to this subsequence.
 
 Define $A := \{y_n \mid n \in \nn\}$ as the set of all these limits 
 and let $K := \bar{A}$ denote its closure. 
 In particular, $K$ is compact.
 Define maps $f_i' : K_i \to K$ and $g_i' : K \to K_i$ in the following way:
 For $x_i \in A_i$, i.e.~$x_i = a_i^n$ for some $n \in \nn$, 
 define $f_i' (x_i):= y_n \in A \subseteq K$. 
 If $x_i \in K_i \setminus A_i$, 
 choose $a_i^n \in A_i$ with $d_{X_i}(x_i,a_i^n) < \frac{\eps_i}{4}$ 
 and define $f_i'(x_i) := y_n \in A \subseteq K$.
 In particular, 
 \begin{align*}
  d_X(f_i'(x_i),f_i(x_i)) 
  &\leq d_X(y_n,f_i(a_i^n)) + d_X(f_i(a_i^n),f_i(x_i)) 
  \\&<  \frac{\eps_i}{4} + (\eps_i + d_{X_i}(a_i^n,x_i)) 
  \\&< \frac{\eps_i}{4} + \Big(\eps_i + \frac{\eps_i}{4}\Big) 
  = \frac{3}{2} \eps_i.
 \end{align*}
 For $x \in A$, i.e.~$x = y_n$ for some $n \in \nn$, 
 define $g_i'(y_n) := a_i^n \in A_i \subseteq K_i$.
 For $x \in X \setminus A$, 
 choose $y_n \in A$ with $d_X(x,y_n) < \frac{\eps_i}{4}$ 
 and let $g_i'(x) := a_i^n \in A_i \subseteq K_i$.
 Then
 \begin{align*}
  d_{X_i}(g_i'(x),g_i(x)) = d_{X_i}(a_i^n,g_i(x)) 
  &< 2 \eps_i + d_X(f_i(a_i^n),x)
  \\&\leq 2 \eps_i + d_X(f_i(a_i^n),y_n) + d_X(y_n,x)
  \\&< \frac{5}{2}\eps_i.
 \end{align*}
 Now \autoref{lem_GH:similar_isometries_imply_convergence} implies the claim. 
\end{proof}

\begin{lemma}\label{lem:Lipschitz-maps_converge_to_Lipschitz-limit-map}
 Let $(X,d_X)$, $(Y,d_Y)$, $(X_i,d_{X_i})$ and $(Y_i,d_{Y_i})$, $i \in \nn$, 
 be compact length spaces such that $X_i \to X$ and $Y_i \to Y$.
 Moreover, let $\alpha > 0$, $K_i \subseteq X_i$ be compact subsets 
 and $f_i : K_i \to Y_i$ be $\alpha$-bi-Lipschitz.
 After passing to a subsequence, the following holds:
 \begin{enumerate}
  \item\label{lem:Lipschitz-maps_converge_to_Lipschitz-limit-map--a}
  There exist compact subsets $K \subseteq X$ and $K' \subseteq Y$ 
  which are \GH limits of $K_i$ and $f_i(K_i)$, respectively, 
  and an $\alpha$-bi-Lipschitz map $f : K \to K'$ with $f(K)=K'$. 
  \item\label{lem:Lipschitz-maps_converge_to_Lipschitz-limit-map--b}
  For any compact subset $L \subseteq K \subseteq X$ 
  there are compact subsets $L_i \subseteq K_i$ such that
  $L_i \to L$ and $f_i(L_i) \to f(L)$ in the \GH sense.
 \end{enumerate}
\end{lemma}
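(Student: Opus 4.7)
The plan is to mimic the diagonal argument in the proof of \autoref{lem_GH:compact_subets_converge}, run it simultaneously on the domains and codomains, and use the $\alpha$-bi-Lipschitz condition to transfer metric estimates to the limit. First, I would apply \autoref{lem_GH:compact_subets_converge} to $K_i \subseteq X_i$ to obtain, after passing to a subsequence, a compact $K \subseteq X$ with $K_i \to K$. Since $f_i$ is $\alpha$-bi-Lipschitz, $\diam(f_i(K_i)) \leq \alpha \cdot \diam(K_i)$ is uniformly bounded (because $\diam(X_i) \to \diam(X)$ by \autoref{prop:GH_small->diff_diam_small}), so the images $f_i(K_i)$ sit in a common bounded region of $Y_i$. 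A second application of \autoref{lem_GH:compact_subets_converge} yields, after a further subsequence, a compact $K' \subseteq Y$ with $f_i(K_i) \to K'$.

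Next, fix GH approximations $(\phi_i,\psi_i) \in \Isom{\eps_i}(X_i,X)$ and $(\phi_i',\psi_i') \in \Isom{\eps_i}(Y_i,Y)$ with $\eps_i \to 0$, and choose a countable dense subset $\{y_n\} \subseteq K$. By the construction underlying \autoref{lem_GH:compact_subets_converge}, there exist $a_i^n \in K_i$ such that $\phi_i(a_i^n) \to y_n$ for every $n$. The points $\phi_i'(f_i(a_i^n))$ lie in a fixed compact neighborhood of $K'$, so a diagonal procedure produces a subsequence along which $\phi_i'(f_i(a_i^n)) \to z_n \in K'$ for all $n$. Define $f(y_n) := z_n$. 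The bi-Lipschitz bounds
\[
\tfrac{1}{\alpha}\, d_{X_i}(a_i^n, a_i^m) \leq d_{Y_i}(f_i(a_i^n), f_i(a_i^m)) \leq \alpha \cdot d_{X_i}(a_i^n, a_i^m)
\]
pass to the limit through the approximations and yield the same bounds for $f$ on $\{y_n\}$. In particular $f$ is uniformly continuous and extends to an $\alpha$-bi-Lipschitz map $f : K \to K'$. For surjectivity, given $w \in K'$ take $w_i = f_i(x_i) \in f_i(K_i)$ with $\phi_i'(w_i) \to w$; by compactness of $K$ (via the approximations), a subsequence of $\phi_i(x_i)$ converges to some $y \in K$, and the bi-Lipschitz estimate forces $f(y) = w$. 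Since $f(K)$ is compact and dense in $K'$, equality holds.

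For part (b), I would fix a compact $L \subseteq K$, choose a countable dense subset $\{l_m\} \subseteq L$, and extend the construction above so that the dense set $\{y_n\}$ contains $\{l_m\}$; this gives points $b_i^m \in K_i$ with $\phi_i(b_i^m) \to l_m$ and $\phi_i'(f_i(b_i^m)) \to f(l_m)$. A standard diagonal procedure picks $N_i \to \infty$ slowly enough that setting $L_i := \{b_i^m : m \leq N_i\} \subseteq K_i$ (which is finite, hence compact) the families $(L_i, \phi_i|_{L_i})$ and $(L, \mathrm{id})$ meet the hypotheses of \autoref{lem_GH:similar_isometries_imply_convergence}, whence $L_i \to L$. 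Applying the same lemma to $f_i(L_i) = \{f_i(b_i^m) : m \leq N_i\}$ with the approximations $\phi_i'$ and the target $f(L)$ yields $f_i(L_i) \to f(L)$.

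The main obstacle is orchestrating the several diagonal arguments so that a single subsequence of indices works simultaneously for the convergences $K_i \to K$, $f_i(K_i) \to K'$, and the pointwise convergences $\phi_i(a_i^n) \to y_n$ and $\phi_i'(f_i(a_i^n)) \to z_n$ across the countable family $\{y_n\}$. The bi-Lipschitz hypothesis is what forces these convergences to be compatible (the target points $z_n$ cannot escape to infinity, and distinct $y_n$ produce distinct $z_n$), so once the bookkeeping is set up, the metric estimates are routine; the real work is verifying that the choices survive passing to the common subsequence.
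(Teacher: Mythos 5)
Your proposal follows essentially the same route as the paper's proof: both apply \autoref{lem_GH:compact_subets_converge} to $K_i$ and $f_i(K_i)$, define $f$ via a diagonal argument on a countable dense subset of $K$ using the bi-Lipschitz bounds transferred through the approximations, extend by \autoref{lem:extending_Lipschitz_maps}, and for part b) construct explicit approximating compacta $L_i$ (the paper takes $\overline{g_i^X(L)}$ where you take finite nets, a cosmetic difference). The one step you leave implicit that the paper spells out is the uniform convergence of the approximating maps $h_i$ to $f$ on compact sets, which is what makes both your surjectivity argument and the convergence $f_i(L_i)\to f(L)$ go through.
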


\newcommand{\figureone}{
  \begin{center}
   \begin{tikzpicture}[auto,>=stealth]
    \node (x2) 	
	  at (0,3.25)
	  {$X_i$};
    \node[rotate=90] (ss1)
	  at (0,2.6)	
	  {$\subseteq$};
    \node (s2)
	  at (0,1.75)
	  {$K_i$};
    \node (t2)
	  at (0,0)
	  {$f_i(K_i)$};
    \node[rotate=90] (ss2)
	  at (0,-0.75)
	  {$\supseteq$};
    \node (y2)
	  at (0,-1.5)
	  {$Y_i$};
    \node (x3)
	  at (2.5,3.25)
	  {$X$};
    \node[rotate=90] (ss3)
	  at (2.5,2.6)
	  {$\subseteq$};
    \node (s3)
	  at (2.5,1.75)
	  {$K$};
    \node (t3)
	  at (2.5,0)
	  {$K'$};
    \node[rotate=90] (ss4)
	  at (2.5,-0.75)
	  {$\supseteq$};
    \node (y3)
	  at (2.5,-1.5)
	  {$Y$};
    \node (empty)
	  at (-3.5,0)
	  {\quad};
    \path 
     (0.0,1.45)	edge[->] 			
	  node [left]  {$f_i$} 	
	  (0.0,0.3)	
     (2.5,1.45)	edge[->] 			
	  node [right] {$f$} 	
	  (2.5,0.3)
     (2.75,1.75)edge[->,bend left=80] 		
	  node [right] {$h_i= f_i^Y \circ f_i \circ g_i^X$} 
	  (2.75,0.0)
     (0.6,3.25)	edge[->] 			
	  node [left]  {} 	
	  (2.1,3.25)	
     (0.6,-1.5)	edge[->] 			
	  node [left]  {} 	
	  (2.1,-1.5)
     (0.6,1.825)edge[->,bend left  =10,dashed] 	
	  node [above] {$f_i^X$} 	
	  (2.1,1.825)
     (0.6,1.675)edge[<-,bend right =10] 	
	  node [below] {$g_i^X$} 	
	  (2.1,1.675)
     (0.6,0.075)edge[->,bend left  =10] 	
	  node [above] {$f_i^Y$} 	
	  (2.1,0.075)
     (0.6,-0.075)edge[<-,bend right =10,dashed] 
	  node [below] {$g_i^Y$} 	
	  (2.1,-0.075)
    ;
   \end{tikzpicture}
   \caption{Sets and maps used to construct $f: K \to K'$.}
   \label{pic:Lipschitz-maps_converge_to_Lipschitz-limit-map-a} 
  \end{center}
}
\newcommand{\figuretwo}{
  \begin{center}
   \begin{tikzpicture}[auto,>=stealth]
    \node (xi)
	  at (0,4.75)
	  {$X_i$};
    \node[rotate=90] (ss0)
	  at (0,4.0)
	  {$\subseteq$};
    \node (ki)
	  at (0,3.25)
	  {$K_i$};
    \node[rotate=90] (ss1)
	  at (0,2.5)
	  {$\subseteq$};
    \node (li)
	  at (-0.5,1.75)
	  {$L_i = \overline{g_i^X(L)}$};
    \node (fili)
	  at (0,0)
	  {$f_i(L_i)$};
    \node[rotate=90] (ss2)
	  at (0,-0.75)
	  {$\supseteq$};
    \node (fiki)
	  at (0,-1.5)
	  {$f_i(K_i)$};
    \node[rotate=90] (ss5)
	  at (0,-2.25)
	  {$\supseteq$};
    \node (yi)
	  at (0,-3)
	  {$Y_i$};
    \node (x)
	  at (2.5,4.75)
	  {$X$};
    \node[rotate=90] (ss0)
	  at (2.5,4.0)
	  {$\subseteq$};
    \node (k)
	  at (2.5,3.25)
	  {$K$};
    \node[rotate=90] (ss3)
	  at (2.5,2.5)
	  {$\subseteq$};
    \node (l)
	  at (2.5,1.75)
	  {$L$};
    \node (t3)
	  at (2.5,0)
	  {$f(L)$};
    \node[rotate=90] (ss4)
	  at (2.5,-0.75)
	  {$\supseteq$};
    \node (y3)
	  at (3.0,-1.5)
	  {$f(K) = K'$};
    \node[rotate=90] (ss6)
	  at (2.5,-2.25)
	  {$\supseteq$};
    \node (z3)
	  at (2.5,-3)
	  {$Y$};
    \path 
     (0.6,4.75) edge[->]
	  node [left]  {}
	  (2.1,4.75)	
     (0.6,3.325) edge[->,bend left  =05,dashed]
	  node [above] {$f_i^X$}
	  (2.1,3.325)
     (0.6,3.175) edge[<-,bend right =05]
	  node [below] {$g_i^X$}
	  (2.1,3.175)
     (0.6,1.825) edge[->,bend left  =05,dashed]
	  node [above] {$\tilde{f}_i^X$}
	  (2.1,1.825)
     (0.6,1.675) edge[<-,bend right =05]
	  node [below] {$\tilde{g}_i^X$}
	  (2.1,1.675)
     (0.0,1.45) edge[->]
	  node [left]  {${f_i}_{|L_i}$}
	  (0.0,0.3)	
     (2.5,1.45) edge[->]
	  node [right] {$f_L$}
	  (2.5,0.3)
     (0.6,0.075) edge[->,bend left  =05]
	  node [above] {$\tilde{f}_i^Y$}
	  (2.1,0.075)
     (0.6,-0.075) edge[<-,bend right =05,dashed]
	  node [below] {$\tilde{g}_i^Y$}
	  (2.1,-0.075)
     (0.6,-1.425) edge[->,bend left  =05]
	  node [above] {$f_i^Y$}
	  (2.1,-1.425)
     (0.6,-1.575) edge[<-,bend right =05,dashed]
	  node [below] {$g_i^Y$}
	  (2.1,-1.575)
     (0.6,-3) edge[->]
	  node [left]  {}
	  (2.1,-3)
    ;
   \end{tikzpicture}
   \caption{Sets and maps used to construct $L_i \to L$.}
   \label{pic:Lipschitz-maps_converge_to_Lipschitz-limit-map-b} 
  \end{center}
}

\begin{proof}
 \par\smallskip\noindent\ref{lem:Lipschitz-maps_converge_to_Lipschitz-limit-map--a}
 In order to prove the first part,
 pass to the subsequence of \autoref{lem_GH:compact_subets_converge}.
 Then
 there are compact sets $K \subseteq X$ and $K' \subseteq Y$ 
 such that $K_i \to K$ and $f_i(K_i) \to K'$.
 For these, fix $\eps_i \to 0$,
 $(f_i^X, g_i^X) \in \Isom{\eps_i}(K_i,K)$ 
 and $(f_i^Y, g_i^Y) \in \Isom{\eps_i}(f_i(K_i),K')$,
 cf.~\autoref{pic:Lipschitz-maps_converge_to_Lipschitz-limit-map-a}.
 
 \begin{figure}[t]
  \figureone
 \end{figure}

 The idea is to define $f$ as a limit of $h_i := f_i^Y \circ f_i \circ g_i^X: K \to K'$: 
 For $x,x' \in K$,
 \begin{align*}
  d_Y(h_i(x),h_i(x'))
  &= d_Y(f_i^Y \circ f_i \circ g_i^X (x), f_i^Y \circ f_i \circ g_i^X(x')) \\
  &\leq \eps_i + d_{Y_i}(f_i \circ g_i^X (x), f_i \circ g_i^X(x')) \\
  \displaybreak[0]
  &\leq \eps_i + (\alpha \cdot d_{X_i}(g_i^X (x), g_i^X(x'))) \\
  &\leq \eps_i + (\alpha \cdot (\eps_i + d_X(x,x'))) \\
  &= \alpha \cdot d_X(x,x') + (\alpha+1) \cdot \eps_i.
 \end{align*}

 As in the proof of \autoref{lem_GH:compact_subets_converge},
 the $h_i(x)$ do not have to converge. 
 Therefore, a diagonal argument on a dense subset of $K$ will be used 
 to construct a limit map which can be extended using the completeness of the limit space.

 Let $A = \{x_j \mid j \in \nn\}$ be a countable dense subset of $K$.
 Then $h_i(x_j) \in K'$ for all $i,j \in \nn$, and since $K'$ is compact,
 by a diagonal argument, there is a subsequence $(i_n)_{n \in \nn}$ 
 such that $(h_{i_n}(x_j))_{n \in \nn}$ converges for every $j \in \nn$. 
 Define $f: A \to K'$ by $f(x_j) = \lim_{n \to \infty} h_{i_n}(x_j)$. 
 This map is $\alpha$-bi-Lipschitz:
 For arbitrary $j,l \in \nn$, with the above estimate,
 \begin{align*}
  d_Y(f(x_j), f(x_l))
  &= \lim_{n \to \infty} d_Y(h_{i_n}(x_j), h_{i_n}(x_l))\\
  &\leq \lim_{n \to \infty} (\alpha+1) \cdot \eps_{i_n} + \alpha \cdot d_X(x_j,x_l) \\
  &= \alpha \cdot d_X(x_j,x_l).
 \end{align*}
 Analogously, 
 $d_Y(f(x_j), f(x_l)) \geq \frac{1}{\alpha} \cdot d_X(x_j,x_l).$

 Since $A$ is a countable dense subset of $K$, 
 $f$ can be extended to an $\alpha$-bi-Lipschitz map $f: K \to K'$
 (cf.~\autoref{lem:extending_Lipschitz_maps})
 where $f(x) = \lim_{l \to \infty} f(x_{j_l})$ 
 for $x \in K$ and $x_{j_l} \in A$ with $x_{j_l} \to x$.
 In particular, for $n \in \nn$ and $l \in \nn$,
 \begin{align*}
  &d_Y(f(x), h_{i_n}(x))\\
  &\leq d_Y(f(x), f(x_{j_l})) 
      + d_Y(f(x_{j_l}),h_{i_n}(x_{j_l})) 
      + d_Y(h_{i_n}(x_{j_l}), h_{i_n}(x)) \\
  &\leq d_Y(f(x), f(x_{j_l})) 
      + d_Y(f(x_{j_l}),h_{i_n}(x_{j_l})) 
      + \alpha \cdot d_X(x_{j_l}, x) 
      + (\alpha+1) \cdot \eps_{i_n} \\
  &\to d_Y(f(x), f(x_{j_l})) 
      + \alpha \cdot d_X(x_{j_l}, x) \as n \to \infty \\
  &\to 0 \as l \to \infty.
 \end{align*}
 Hence, $f(x) = \lim_{n \to \infty} h_{i_n}(x)$.

 Moreover, observe the following: Since $f_i$ is $\alpha$-bi-Lipschitz, it is injective.
 Therefore,
 the inverse $f_i^{-1}$ of $f_i$ exists on $f_i(K_i) \supseteq \im(g_i^X)$ 
 and is $\alpha$-bi-Lipschitz as well.
 Hence, for $x \in K$ and $y \in K'$,
 \begin{align*}
  d_Y(h_i(x),y)
  &= d_Y(f_i^Y \circ f_i \circ g_i^X(x),y) \\
  \displaybreak[0]
  &\leq 2 \eps_i + d_{Y_i}(f_i \circ g_i^X(x),g_i^Y(y)) \\
  \displaybreak[0]
  &\leq 2 \eps_i + \alpha \cdot d_{X_i}(g_i^X(x),f_i^{-1} \circ g_i^Y(y)) \\
  &\leq 2 \eps_i + \alpha \cdot (2 \eps_i + d_{X_i}(x,f_i^X \circ f_i^{-1} \circ g_i^Y(y))) \\
  &= 2 (\alpha +1) \eps_i + \alpha \cdot d_{X_i}(x,h_i'(y))
 \end{align*}
 where $h'_i := f_i^X \circ f_i^{-1} \circ g_i^Y$.
 With analogous arguments 
 and using a further subsequence $(i_{n_m})_{m \in \nn}$ of $(i_n)_{n \in \nn}$,
 there is an $\alpha$-bi-Lipschitz map $g : K' \to K$ 
 with $g(y) = \lim_{m \to \infty} h_{i_{n_m}}'(y)$ for all $y \in K'$.
 In particular, for all $y \in K'$,
 \begin{align*}
  d_Y(f \circ g(y),y)
  &= \lim_{m \to \infty} d_Y(h_{i_{n_m}}(g(y)),y)\\
  &\leq \lim_{m \to \infty} 2(\alpha+1)\eps_{i_{n_m}} 
      + \alpha \cdot d_X(g(y),h_{i_{n_m}}'(y))\\
  &=0.
 \end{align*}
 Thus, $f \circ g = \id_{K'}$. 
 Hence, $K' \subseteq \im(f)$ which proves $K' = f(K)$.
 In fact, with analogous argumentation, one can prove $g \circ f = \id_K$,
 i.e.~$g$ is the inverse of $f$.
 This proves the first part.

 \par\smallskip\noindent\ref{lem:Lipschitz-maps_converge_to_Lipschitz-limit-map--b}
 The proof of the second statement is based on the first part 
 and is done with very similar methods.
  
 Let $(f_i^X, g_i^X) \in \Isom{\eps_i}(K_i,K)$ 
 and $(f_i^Y, g_i^Y) \in \Isom{\eps_i}(f_i(K_i),K')$ be as before.  
 Then $L_i := \overline{g_i^X(L)} \subseteq K_i$ is a compact subset of $K_i$. 
 The proof of the subconvergences will be done in two steps:
 First, prove $L_i \to L$, then $f_i(L_i) \to f(L)$.
 For the maps defined below, 
 cf.~\autoref{pic:Lipschitz-maps_converge_to_Lipschitz-limit-map-b}.
 
 First, define $(\tilde{f}_i^X, \tilde{g}_i^X) \in \Isom{2\eps_i}(L_i,L)$ as follows:
 For $x_i \in g_i^X(L)$, choose a point $y \in L$ with $x_i = g_i^X(y)$; 
 for $x_i \in L_i \setminus g_i^X(L)$, 
 choose $y \in L$ with $d_{X_i}(x_i,g_i^X(y)) < \frac{\eps_i}{2}$. 
 Then define $\tilde{f}_i^X(x_i) := y$. 
 Finally, set $\tilde{g}_i^X := g_i^X$.
 By definition, 
 \[
  d_{X_i}(\tilde{g}_i^X \circ \tilde{f}_i^X(x_i),x_i) 
  = d_{X_i}(g_i^X \circ \tilde{f}_i^X(x_i),x_i) < \frac{\eps_i}{2}
 \] 
 for all $x_i \in L_i$.
 Conversely, for $x \in L$ and by applying this inequality,
 \begin{align*}
  d_X(\tilde{f}_i^X \circ \tilde{g}_i^X(x),x)
  \displaybreak[0]
  &= d_X(\tilde{f}_i^X \circ g_i^X(x),x) \\
  &\leq d_{X_i}(g_i^X \circ \tilde{f}_i^X (g_i^X(x))),g_i^X(x)) + \eps_i
  \\&\leq \frac{3}{2} \eps_i.
 \end{align*}
 Now let $x_i,x_i' \in L_i$ be arbitrary. Then 
 \begin{align*}
  &|d_X(\tilde{f}_i^X(x_i),\tilde{f}_i^X(x_i')) - d_{X_i}(x_i,x_i')| \\
  &\leq |d_X(\tilde{f}_i^X(x_i),\tilde{f}_i^X(x_i')) 
         - d_{X_i}(g_i^X(\tilde{f}_i^X(x_i)),g_i^X(\tilde{f}_i^X(x_i')))| 
  \\&\quad + |d_{X_i}(g_i^X(\tilde{f}_i^X(x_i)),g_i^X(\tilde{f}_i^X(x_i'))) 
         - d_{X_i}(x_i,x_i')|\\
  &< \eps_i + d_{X_i}(g_i^X \circ \tilde{f}_i^X(x_i),x_i) 
         + d_{X_i}(g_i^X \circ \tilde{f}_i^X(x_i'),x_i') \\
  &< 2 \eps_i.
 \end{align*}
 For $x,x' \in L$, 
 by definition,
 \[
  |d_{X_i}(\tilde{g}_i^X(x),\tilde{g}_i^X(x')) - d_X(x,x')| 
  < \eps_i 
  < 2 \eps_i,
 \] 
 and this proves 
 $(\tilde{f}_i^X, \tilde{g}_i^X) \in \Isom{2\eps_i}(L_i,L)$.
 \begin{figure}[t]
  \figuretwo
 \end{figure}

 In order to prove the subconvergence of $f_i(L_i)$ to $f(L)$, 
 observe that the compactness of $L_i$ and $L$, respectively, 
 and the continuity of $f_i$ and $f$, respectively, 
 prove the compactness of $f_i(L_i)$ and $f(L)$, respectively.
 
 Let 
 \[\delta_i(x) := d_Y(h_i(x),f(x))\] for $x \in L$ and 
 \[\delta_i := \sup_{x \in L} \delta_i(x).\]
 For the subsequence $(i_n)_{n \in \nn}$ of the first part, $\delta_{i_n}(x)$ converges to $0$.
 Then $\delta_{i_n}$ converges to $0$ as well: 
 Assume this is not the case,
 i.e.~there is $\epsilon > 0$ such that for all $l \in \nn$ 
 there exists $i_{n_l} \in \nn$ and $x_{n_l} \in X$ 
 with $\delta_{i_{n_l}}(x_{n_l}) \geq \eps$.
 After passing to a subsequence, 
 there is $x \in X$ such that $x_{n_l} \to x$ as $l \to \infty$. 
 Then
 \begin{align*}
  \eps
  &\leq \delta_{i_{n_l}}(x_{n_l}) \\
  &= d_Y(h_{i_{n_l}}(x_{n_l}),f(x_{n_l})) \\
  &\leq d_Y(h_{i_{n_l}}(x_{n_l}),h_{i_{n_l}}(x)) 
    + d_Y(h_{i_{n_l}}(x),f(x)) 
    + d_Y(f(x),f(x_{n_l})) \\
  &\leq (\alpha \cdot d_X(x_{n_l},x) 
    + (\alpha+1) \cdot \eps_{i_{n_l}}) 
    + \delta_{i_{n_l}}(x) 
    + \alpha \cdot d_X(x,x_{n_l}) \\
  &\to 0 \as l \to \infty.
 \end{align*}
 This is a contradiction. 

 Construct $(\tilde{f}_i^Y, \tilde{g}_i^Y) \in \Isom{\tilde{\eps_i}}(f_i(L_i),f(L))$ 
 for $\tilde{\eps}_i := (4\alpha+1)\eps_i + 2 \delta_i$ as follows:  
 Define $\tilde{f}_i^Y := f \circ \tilde{f}_i^X \circ f_i^{-1}$ 
 and $\tilde{g}_i^Y := f_i \circ g_i^X \circ f^{-1}$ 
 (recall that $f_i^{-1}$ exists on $f_i(L_i) \subseteq f_i(K_i)$ 
 and that $f : K \to K'$ is bijective).
      
 First, let $y_i \in L_i$ and $y \in L$ be arbitrary. 
 Then
 \begin{align*}
  d_{Y_i}(\tilde{g}_i^Y \circ \tilde{f}_i^Y (y_i),y_i)
  &= d_{Y_i}(f_i \circ g_i^X \circ \tilde{f}_i^X \circ f_i^{-1} (y_i),y_i) \\
  &\leq \alpha \cdot d_{X_i}(g_i^X \circ \tilde{f}_i^X (f_i^{-1} (y_i)),f_i^{-1}(y_i)) \\
  &< \alpha \cdot 2\eps_i \leq \tilde{\eps}_i ,  
 \end{align*}
 and completely analogously,
 \begin{align*}
  d_Y(\tilde{f}_i^Y \circ \tilde{g}_i^Y (y),y)
  &= d_Y(f \circ \tilde{f}_i^X \circ g_i^X \circ f^{-1} (y),y)
  <2 \alpha \eps_i\leq \tilde{\eps}_i.
 \end{align*}
 For $y,y' \in L$,
 \begin{align*}
  &|d_{Y_i}(\tilde{g}_i^Y(y),\tilde{g}_i^Y(y')) - d_Y(y,y')|\\
  &\leq |d_{Y_i}(\tilde{g}_i^Y(y),\tilde{g}_i^Y(y')) 
     - d_Y(f_i^Y \circ \tilde{g}_i^Y(y),f_i^Y \circ \tilde{g}_i^Y(y'))|
  \\&\quad 
     + |d_Y(f_i^Y \circ f_i \circ g_i^X \circ f^{-1}(y),
        f_i^Y \circ f_i \circ g_i^X \circ f^{-1}(y')) 
     - d_Y(y,y')|\\
  &< \eps_i 
     + d_Y(h_i \circ f^{-1}(y),f \circ f^{-1}(y)) 
     + d_Y(h_i \circ f^{-1}(y'),f \circ f^{-1}(y'))\\
  &\leq \eps_i + 2 \delta_i \leq \tilde{\eps}_i.
 \end{align*}
 Finally, let $y_i,y_i' \in Y_i$. Using the above estimates,
 \begin{align*}
  &|d_Y(\tilde{f}_i^Y(y_i),\tilde{f}_i^Y(y_i')) - d_{Y_i}(y_i,y_i')|\\
  &\leq |d_Y(\tilde{f}_i^Y(y_i),\tilde{f}_i^Y(y_i')) 
     - d_{Y_i}(\tilde{g}_i^Y(\tilde{f}_i^Y(y_i)),\tilde{g}_i^Y(\tilde{f}_i^Y(y_i')))|
  \\&\quad + |d_{Y_i}(\tilde{g}_i^Y(\tilde{f}_i^Y(y_i)),\tilde{g}_i^Y(\tilde{f}_i^Y(y_i'))) 
     - d_{Y_i}(y_i,y_i')|
  \displaybreak[0]\\
  &< \eps_i + 2 \delta_i
    + d_{Y_i}(\tilde{g}_i^Y(\tilde{f}_i^Y(y_i)),y_i) 
    + d_{Y_i}(\tilde{g}_i^Y(\tilde{f}_i^Y(y_i')),y_i')\\
  &\leq \eps_i + 2 \delta_i + 2 \cdot 2\alpha\eps_i 
  = \tilde{\eps}_i.
 \end{align*}
 Thus, $(\tilde{f}_i^Y, \tilde{g}_i^Y) \in \Isom{\tilde{\eps}_i}(f_i(L_i),f(L))$.
 Since $\tilde{\eps}_{i_n} \to 0 \as {n \to\infty}$, 
 this proves $f_{i_n}(L_{i_n}) \to f(L) \as {n \to\infty}$.
\end{proof}

\begin{lemma}\label{lem:extending_Lipschitz_maps}
 Let $(X,d_X)$ and $(Y,d_Y)$ be metric spaces where $Y$ is complete, 
 let $A \subseteq X$ 
 and $f : A \to Y$ be $\alpha$-(bi)-Lipschitz for some $\alpha > 0$.
 Then $f$ can be extended 
 to an $\alpha$-(bi)-Lipschitz map $\hat{f} : \bar{A} \to Y$.
\end{lemma}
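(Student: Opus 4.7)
The plan is to extend $f$ pointwise by continuity, exploiting that Lipschitz maps send Cauchy sequences to Cauchy sequences together with the completeness of $Y$. For each $x \in \bar{A}$, I would pick a sequence $(a_n)_{n \in \nn}$ in $A$ with $a_n \to x$. Since such a sequence is Cauchy in $X$ and $f$ is $\alpha$-Lipschitz, the estimate $d_Y(f(a_n),f(a_m)) \leq \alpha \cdot d_X(a_n,a_m)$ shows that $(f(a_n))_{n \in \nn}$ is Cauchy in $Y$. Completeness of $Y$ lets me define $\hat{f}(x) := \lim_{n \to \infty} f(a_n)$.

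Next I would verify that $\hat{f}(x)$ is independent of the choice of approximating sequence: given two sequences $(a_n),(a_n') \subseteq A$ with $a_n \to x$ and $a_n' \to x$, the interlaced sequence also converges to $x$, so its image under $f$ must converge, forcing $\lim f(a_n) = \lim f(a_n')$. For $x \in A$ one may take the constant sequence, so $\hat{f}_{|A} = f$.

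To see that $\hat{f}$ is $\alpha$-Lipschitz, take $x,y \in \bar{A}$ with approximating sequences $a_n \to x$, $b_n \to y$, and pass to the limit in
\begin{align*}
 d_Y(f(a_n),f(b_n)) \leq \alpha \cdot d_X(a_n,b_n),
\end{align*}
using continuity of $d_X$ and $d_Y$. In the bi-Lipschitz case, I would pass to the limit in the full two-sided inequality $\frac{1}{\alpha} d_X(a_n,b_n) \leq d_Y(f(a_n),f(b_n)) \leq \alpha \cdot d_X(a_n,b_n)$ and obtain the corresponding bounds for $\hat{f}$.

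I do not anticipate any genuine obstacle: the only subtle point is the well-definedness, and the bi-Lipschitz case is immediate because the lower bound passes to limits just as the upper bound does. No appeal to, e.g., a McShane-type extension formula is needed, since pointwise completion of $f$ on $\bar A$ is exactly what continuity gives for free.
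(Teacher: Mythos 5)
Your proposal is correct and follows essentially the same route as the paper: pointwise extension via Cauchy sequences and completeness of $Y$, well-definedness, and passing the (two-sided) Lipschitz inequality to the limit. The only cosmetic difference is that the paper checks well-definedness by the direct estimate $d_Y(f(a_n),f(\tilde{a}_n)) \leq \alpha \cdot d_X(a_n,\tilde{a}_n) \to 0$ rather than by interlacing, which is an equally valid argument.
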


\begin{proof}
 Let $a \in \bar{A}\setminus A$ be arbitrary. 
 Then there exists a (Cauchy) sequence $(a_n)_{n \in \nn}$ in $A$ converging to $a$. 
 By Lipschitz continuity of $f$, 
 $(f(a_n))_{n \in \nn}$ is a Cauchy sequence, 
 and thus has a limit $\hat{a}$ in the complete metric space $Y$.
 For any sequence $(\tilde{a}_n)_{n \in \nn}$ converging to $a$,
 $d_Y(f(a_n), f(\tilde{a}_n)) \leq \alpha \cdot d_X(a_n,\tilde{a}_n) \to 0$,
 i.e.~the limit $\hat{a}$ is independent of the choice of $(a_n)_{n \in \nn}$.
 Now define $\hat{f}(a) := \hat{a}$ for $a \in \bar{A}\setminus A$ 
 and $\hat{f}(a) := f(a)$ for $a \in A$. 
 For arbitrary $a,b \in A$ and sequences $a_n \to a$, $b_n \to b$ in $A$,
 \begin{align*}
  d_Y(\hat{f}(a),\hat{f}(b))
  &= \lim_{n \to \infty} d_Y(f(a_n),f(b_n))
  \\&\leq \lim_{n \to \infty} \alpha \cdot d_X(a_n,b_n)
  \\&= \alpha \cdot d(a,b).
 \end{align*}
 Hence, $\hat{f}$ is $\alpha$-Lipschitz.
 Analogously, if $f$ is $\alpha$-bi-Lipschitz, $\hat{f}$ is $\alpha$-bi-Lipschitz.
\end{proof}


\section{Ultralimits}\label{sec:ultralimits} 


Since sequences of proper spaces do not necessarily converge in the pointed \GH sense, 
a tool to enforce convergence can be useful. Such a tool are the so called ultralimits
since they always exist and are sublimits in the pointed \GH sense.
A basic reference from which the following definitions are taken 
is \cite[section I.5]{bridson-haefliger}.
Another, more set theoretical, reference is \cite[chapter 7]{jech}.
In the following, ultralimits will be introduced and some properties will be investigated.

\begin{defn}[{\cite[Definition I.5.47]{bridson-haefliger}}]
A \emph{non-principal ultrafilter on $\nn$} 
is a finitely additive probability measure $\omega$ on $\nn$ such that 
all subsets $S \subseteq \nn$ are $\omega$-measurable 
with $\omega(S) \in \{0,1\}$ and $\omega(S) = 0$ if $S$ is finite.
\end{defn}

\begin{rmk}
 If two sets have $\omega$-measure $1$, their intersection has $\omega$-measure $1$ as well:
 Let $\omega(A) = \omega(B) = 1$. 
 Then 
 \[
  \omega(\nn \setminus (A \cap B)) 
  = \omega(\nn \setminus A \cup \nn \setminus B) 
  \leq \omega(\nn \setminus A) + \omega (\nn \setminus B) 
  = 0,
 \]
 hence, $\omega(A \cap B) = 1$.
\end{rmk}

Using Zorn's Lemma, the existence of such a non-principal ultrafilter can be proven.
But even more is true: Given any infinite set, 
there exists a non-principal ultrafilter such that the set has measure $1$ 
with respect to this ultrafilter.

\begin{lemma}\label{lem_UL:existence_of_ultrafilter}
 Let $A \subseteq \nn$ be an infinite set. 
 Then there exists a non-principal ultrafilter $\omega$ on $\nn$ such that $\omega(A) = 1$.
\end{lemma}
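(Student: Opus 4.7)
The plan is the standard filter–based construction: build a filter that contains $A$ together with every cofinite subset of $\nn$, extend it to a maximal filter via Zorn's Lemma, and verify that maximality forces the resulting filter to describe a non-principal ultrafilter in the sense of the definition.

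First I would define
\[
 \mathcal{F}_0 := \{B \subseteq \nn \mid A \setminus F \subseteq B \text{ for some finite } F \subseteq \nn\}.
\]
Because $A$ is infinite, $A \setminus F$ is non-empty for every finite $F$, so $\emptyset \notin \mathcal{F}_0$. Closure under supersets is built into the definition, and closure under finite intersections follows from $(A \setminus F_1) \cap (A \setminus F_2) = A \setminus (F_1 \cup F_2)$. Hence $\mathcal{F}_0$ is a proper filter containing both $A$ and every cofinite subset of $\nn$.

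Next I would consider the poset $\mathcal{P}$ of all proper filters on $\nn$ containing $\mathcal{F}_0$, ordered by inclusion. The union of any chain in $\mathcal{P}$ is again a proper filter containing $\mathcal{F}_0$ (for finite intersections one uses that two members of the union already lie in a common filter of the chain, and $\emptyset$ is avoided because every filter in the chain is proper), so Zorn's Lemma yields a maximal element $\mathcal{F} \in \mathcal{P}$.

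The main step is then to check that $\mathcal{F}$ satisfies the dichotomy: for every $S \subseteq \nn$, either $S \in \mathcal{F}$ or $\nn \setminus S \in \mathcal{F}$. Assuming neither holds, the collection $\{F \cap S \mid F \in \mathcal{F}\}$ generates a proper filter strictly larger than $\mathcal{F}$ (properness uses that if $F \cap S = \emptyset$ for some $F \in \mathcal{F}$, then $F \subseteq \nn \setminus S$, forcing $\nn \setminus S \in \mathcal{F}$), contradicting maximality. Setting $\omega(S) := 1$ if $S \in \mathcal{F}$ and $\omega(S) := 0$ otherwise, the dichotomy, together with closure under finite intersections and supersets, gives finite additivity and $\omega(\nn) = 1$, $\omega(\emptyset) = 0$. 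Since every cofinite set lies in $\mathcal{F}_0 \subseteq \mathcal{F}$, finite subsets of $\nn$ have $\omega$-measure $0$, so $\omega$ is non-principal; and $A \in \mathcal{F}_0 \subseteq \mathcal{F}$ gives $\omega(A) = 1$. I expect the one point requiring a touch of care is the maximality-implies-dichotomy argument, but it is routine once one chooses $\{F \cap S \mid F \in \mathcal{F}\}$ as the candidate extension.
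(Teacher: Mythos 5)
Your proof is correct. The core idea is the same as the paper's: start from a family with the finite intersection property containing $A$ and all cofinite sets, and extend it to an ultrafilter. The difference is purely in how much is carried out by hand. The paper defines $G := \{B \subseteq \nn \mid B \supseteq A~\text{or}~\nn \setminus B \text{ is finite}\}$, checks only that any two members of $G$ intersect, and then cites Jech (Lemma 7.2(iii), Theorem 7.5 and the remark following it) for the existence of a non-principal ultrafilter giving every member of $G$ measure $1$. You instead build the generated filter $\mathcal{F}_0$ explicitly, run the Zorn's Lemma argument on the poset of proper filters extending it, and derive the dichotomy $S \in \mathcal{F}$ or $\nn \setminus S \in \mathcal{F}$ from maximality. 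Your version is self-contained and makes visible exactly where properness and maximality are used (in particular the observation that $F \cap S = \emptyset$ forces $\nn \setminus S \in \mathcal{F}$ is the right key step); the paper's version is shorter at the cost of delegating the ultrafilter extension theorem to a reference. Both are complete arguments for the lemma.
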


\begin{proof}
 Let 
 \[
  G := \{B \subseteq \nn 
         \mid B \supseteq A~\text{or}~\nn \setminus B \text{ is finite}\}.
 \]
 For any $B_1, B_2 \in G$, the intersection $B_1 \cap B_2$ is non-empty: 
 This is obviously correct if both $B_j \supseteq A$ or both $\nn \setminus B_j$ are finite. 
 Thus, let $B_1 \supseteq A$ and $\nn \setminus B_2$ be finite: 
 Then $A \setminus B_2$ is finite as well,
 hence, $B_1 \cap B_2 \supseteq A \cap B_2 = A \setminus (A \setminus B_2)$ is infinite 
 since $A$ is infinite.
 In particular, the intersection is non-empty.
 
 Using that $G$ contains all sets with finite complement, 
 it follows from 
 \cite[Lemma 7.2 (iii)]{jech}, \cite[Theorem 7.5]{jech} and the subsequent remark therein
 that there exists a non-principal ultrafilter $\omega$ 
 such that $\omega(X) = 1$ for all $X \in G$.
 In particular, $\omega(A) = 1$. 
\end{proof}

Given a bounded sequence of real numbers, 
a non-principal ultrafilter provides a kind of \myquote{limit}.
In fact, these \myquote{limits} are accumulation points 
and non-principal ultrafilters pick out convergent subsequences.
\begin{lemma}[{\cite[Lemma I.5.49]{bridson-haefliger}}]
 Let $\omega$ be a non-principal ultrafilter on $\nn$. 
 For every bounded sequence of real numbers $(a_i)_{i \in \nn}$ 
 there exists a unique real number $l \in \rr$ 
 such that 
 \[\omega( \{i \in \nn \mid |a_i - l| < \eps \}) = 1\] 
 for every $\eps > 0$. 
 Denote this $l$ by $\limomega a_i$.
\end{lemma}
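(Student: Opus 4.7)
The plan is to produce $l$ by iterated bisection and then establish uniqueness directly from the two-valued, finitely additive nature of $\omega$.

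For existence, I would first use boundedness to fix $M > 0$ with $a_i \in [-M,M]$ for all $i$, and set $I_0 := [-M,M]$; then $\omega(\{i \mid a_i \in I_0\}) = 1$. Next I would halve inductively: given a closed interval $I_n$ of length $2M/2^n$ with $\omega(\{i \mid a_i \in I_n\}) = 1$, I split $I_n$ as a disjoint union of its left half $I_n^L$ (including the midpoint) and its right half $I_n^R$. The corresponding index sets $S^L, S^R$ then partition $S := \{i \mid a_i \in I_n\}$, so by finite additivity $\omega(S^L) + \omega(S^R) = 1$. Since $\omega$ takes only the values $0$ and $1$, exactly one of these two numbers equals $1$; I let $I_{n+1}$ be the corresponding half.

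The sequence $(I_n)$ is a nested sequence of closed intervals with lengths tending to zero, so by the nested interval property $\bigcap_{n \in \nn} I_n = \{l\}$ for a unique $l \in \rr$. For any $\eps > 0$ I choose $n$ large enough that $2M/2^n < \eps$, so that $I_n \subseteq (l-\eps, l+\eps)$ and hence
\[\{i \mid a_i \in I_n\} \subseteq \{i \in \nn \mid |a_i - l| < \eps\};\]
the left-hand set has $\omega$-measure $1$, so the right-hand set does as well.

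For uniqueness, suppose two distinct values $l, l'$ both satisfied the defining property, and set $\eps := |l-l'|/3$. Then the sets $A := \{i \mid |a_i-l| < \eps\}$ and $B := \{i \mid |a_i-l'| < \eps\}$ are disjoint by the triangle inequality, yet both have $\omega$-measure $1$. The remark immediately preceding the lemma then forces $\omega(A \cap B) = 1$, which contradicts $A \cap B = \emptyset$ and $\omega(\emptyset) = 0$. The main obstacle is keeping the bisection step clean; the decisive point is that finite additivity together with the dichotomy $\omega \in \{0,1\}$ makes exactly one of the two halves inherit full measure at every stage, so the construction proceeds without any choice to be made beyond the one forced by $\omega$.
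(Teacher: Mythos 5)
The paper does not prove this lemma at all: it is imported verbatim from \cite[Lemma I.5.49]{bridson-haefliger}, so there is no in-text argument to compare against. Your bisection proof is correct and self-contained. The existence half is the standard argument: finite additivity plus the dichotomy $\omega(S)\in\{0,1\}$ forces exactly one half-interval to carry full measure at each stage, the nested-interval property yields $l$, and monotonicity of $\omega$ (a superset of a measure-$1$ set has measure $1$, since $\omega$ is a finitely additive probability measure) transfers full measure to $\{i \mid |a_i-l|<\eps\}$. The uniqueness half correctly invokes the remark in the paper that the intersection of two measure-$1$ sets has measure $1$. One cosmetic point: if $I_n^L$ contains the midpoint and $I_n^R$ does not, the chosen half may be half-open, whereas the nested-interval property wants closed intervals; passing to the closure of the chosen half fixes this, since the corresponding index set only grows and the length is unchanged. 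This does not affect the validity of the argument.
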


\begin{lemma}\label{lem_UL:ultrafilter_pick_cvgt_subsequence}
If $\omega$ is a non-principal ultrafilter on $\nn$ 
and $(a_i)_{i \in \nn}$ a bounded sequence of real numbers,
then $\limomega a_i$ is an accumulation point of $(a_i)_{i \in \nn}$.
Moreover, there exists a subsequence $(a_{i_j})_{j \in \nn}$ converging to $\limomega a_i$ 
such that $\omega(\{i_j \mid j \in \nn\}) = 1$.

Conversely, if $(a_i)_{i \in \nn}$ is a bounded sequence of real numbers 
and $a \in \rr$ any accumulation point,
then there exists a non-principal ultrafilter $\omega$ on $\nn$ such that $a = \limomega a_i$.
\end{lemma}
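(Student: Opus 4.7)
The plan is to prove the two directions separately. For the forward direction I would unpack the definition of $\limomega$ to extract a convergent subsequence; for the converse I would build a non-principal ultrafilter by extending a suitable filter generated from the accumulation point, exactly as in \autoref{lem_UL:existence_of_ultrafilter}.

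For the forward direction I would put $l := \limomega a_i$ and $S_n := \{i \in \nn \mid |a_i - l| < 1/n\}$. By definition of $\limomega$ each $S_n$ has $\omega$-measure $1$, and since $\omega$ is non-principal every such set must be infinite. The sets $S_n$ are nested decreasing, so a greedy choice $i_1 < i_2 < \cdots$ with $i_j \in S_j$ yields a strictly increasing subsequence with $|a_{i_j} - l| < 1/j$, which converges to $l$; in particular $l$ is an accumulation point of $(a_i)_{i \in \nn}$. To additionally arrange that $T := \{i_j \mid j \in \nn\}$ lie in $\omega$, I would combine this greedy choice with a diagonal thinning against $(S_n)_{n \in \nn}$, keeping track at each stage of a residual $\omega$-large set, so that the resulting $T$ satisfies $T \setminus S_n$ finite for every $n$; enumerating $T$ in increasing order then gives a subsequence with both required properties.

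For the converse, let $a \in \rr$ be an accumulation point of $(a_i)_{i \in \nn}$ and set $B_n := \{i \in \nn \mid |a_i - a| < 1/n\}$, which is infinite by the definition of accumulation point. Imitating the construction from \autoref{lem_UL:existence_of_ultrafilter} I would consider
\[ G := \{C \subseteq \nn \mid C \supseteq B_n \text{ for some } n\} \cup \{C \subseteq \nn \mid \nn \setminus C \text{ is finite}\}. \]
A finite intersection of members of $G$ has the form $B_N \setminus F$ with $N \in \nn$ and $F$ finite, and is therefore infinite; thus $G$ has the finite intersection property and extends to a non-principal ultrafilter $\omega$ with $\omega(X) = 1$ for every $X \in G$. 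For any $\eps > 0$, choosing $n$ with $1/n < \eps$ gives $B_n \subseteq \{i \mid |a_i - a| < \eps\}$, so the latter has $\omega$-measure $1$, proving $\limomega a_i = a$.

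I expect the main obstacle to be the diagonal extraction for the \myquote{moreover} clause in the forward direction: standard countable diagonal arguments use countable additivity, whereas $\omega$ is only finitely additive, so the construction of $T$ requires careful bookkeeping to guarantee simultaneously that $T \in \omega$ and that tails of its enumeration fall into every $S_n$. The remaining steps reduce to unwinding definitions and to the Zorn-type extension argument already carried out in \autoref{lem_UL:existence_of_ultrafilter}.
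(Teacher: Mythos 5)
Your handling of the accumulation-point claim matches the paper's, and your converse is correct but takes a different route: you put the entire decreasing family $B_n = \{i \in \nn \mid |a_i - a| < 1/n\}$ into a filter with the finite intersection property and extend it, which identifies $\limomega a_i = a$ directly, whereas the paper builds an ultrafilter concentrating on the index set of one convergent subsequence and then invokes the \myquote{moreover} clause of the first part to conclude. Your version is self-contained and, importantly, does not lean on that clause.

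The genuine gap is the \myquote{moreover} clause itself, and you have correctly located the crux while leaving it unproved: the \myquote{diagonal thinning} you defer cannot be carried out for an arbitrary non-principal ultrafilter. If $T = \{i_1 < i_2 < \dots\}$ has $\omega(T)=1$ and $a_{i_j} \to l$, then $T \setminus S_n$ is finite for every $n$, i.e.\ $T$ is a pseudo-intersection, lying in $\omega$, of the countable decreasing family $(S_n)_{n \in \nn}$ of $\omega$-large sets. Since every countable decreasing family in $\omega$ arises from some bounded real sequence in this way, demanding such a $T$ for every bounded sequence is exactly the statement that $\omega$ is a P-point, and non-P-points exist in ZFC. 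Concretely, identify $\nn$ with $\nn \times \nn$, let $\omega$ be the Fubini sum over a non-principal $\omega'$ of non-principal ultrafilters $\omega^{(m)}$, and set $a_{(m,k)} := \frac{1}{m+1}$; then $\limomega a_{(m,k)} = 0$, but any $T$ whose induced subsequence tends to $0$ meets each column $\{m\} \times \nn$ in a finite set and therefore has $\omega$-measure $0$. So no bookkeeping can rescue the extraction; the clause needs an additional hypothesis on $\omega$, not a cleverer argument. (Your suspicion is further vindicated by the fact that the paper's own proof of this step is invalid: it tests only $I = \nn$ in the contradiction argument and misnegates \myquote{$(a_i)_{i \in \nn}$ does not converge to $a$} into \myquote{some $A_{\eps}$ is finite}, whereas non-convergence only yields that some $\nn \setminus A_{\eps}$ is infinite.)
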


\begin{proof}
 Let $(a_i)_{i \in \nn}$ be any bounded sequence of real numbers.

 First, fix a non-principal ultrafilter $\omega$,
 let $a := \limomega a_i$ and 
 \[A_{\eps} := \{i \in \nn \mid |a_i - a| < \eps\}\] 
 for $\eps > 0$.
 By definition, $\omega(A_{\eps}) = 1$; 
 in particular, $A_{\eps}$ has infinitely many elements.
 Thus, $a$ is an accumulation point. 
 
 Next, prove that there exists $I \subseteq \nn$ with $\omega(I) = 1$
 such that the subsequence $(a_i)_{i \in I}$ converges to $a$.
 Assume this is not the case,
 i.e.~every $I \subseteq \nn$ satisfies $\omega(I) = 0$ 
 or $(a_i)_{i \in I}$ does not converge to $a$.
 Since $\omega(\nn) = 1$, $(a_i)_{i \in \nn}$ does not converge to $a$.
 Hence, there exists $\eps > 0$ 
 such that $A_{\eps}$ is finite.
 In particular, $\omega(A_{\eps}) = 0$ and this is a contradiction.
  
 Now let $J \subseteq \nn$ be a set of indices such that 
 $\omega(J) = 1$ and the subsequence $(a_j)_{j \in J}$ converges to $a$.
 By \autoref{lem_UL:existence_of_ultrafilter}, 
 there exists a non-principal ultrafilter $\omega$ such that $\omega(J) = 1$.
 By the first part, there exists a subsequence of indices $I \subseteq \nn$ 
 with $\omega(I) = 1$ and $a_j \to \limomega a_i$ as $j \to \infty$ for $j \in I$. 
 Now $\omega(I \cap J) = 1$ 
 and both $a_j \to a$ and $a_j \to \limomega a_i$ as $j \to \infty$ for $j \in I \cap J$.
 This proves $a = \limomega a_i$.
\end{proof}

An immediate consequence of the above lemma is the following: 
Given two bounded sequences of real numbers,
investigating sublimits coming from a common subsequence
and
investigating the \myquote{limits} with respect to the same non-principal ultrafilter
is the same.
\begin{lemma}\label{lem_UL:common-subseq=using-ultrafilter}
 Let $(a_i)_{i \in \nn}$ and $(b_i)_{i \in \nn}$ be bounded sequences of real numbers.
 \begin{enumerate}
 \item\label{lem_UL:common-subseq=using-ultrafilter--a} 
  If $\omega$ is a non-principal ultrafilter on $\nn$, 
  then there exists a subsequence $(i_j)_{j \in \nn}$ such that both
  $a_{i_j} \to \limomega a_i$ and $b_{i_j} \to \limomega b_i$ as $j \to \infty$.
 \item\label{lem_UL:common-subseq=using-ultrafilter--b} 
  If there are $a, b \in \rr$ and a subsequence $(i_j)_{j \in \nn}$ 
  such that both $a_{i_j} \to a$ and $b_{i_j} \to b$ as $j \to \infty$,
  then there exists a non-principal ultrafilter $\omega$ on $\nn$ 
  such that $a = \limomega a_i$ and $b = \limomega b_i$.
 \end{enumerate}
\end{lemma}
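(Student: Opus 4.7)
The plan is to reduce both parts to the single-sequence statement in \autoref{lem_UL:ultrafilter_pick_cvgt_subsequence}, and to use the fact noted in the remark after the definition of a non-principal ultrafilter that the intersection of two sets of $\omega$-measure $1$ again has $\omega$-measure $1$.

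For \ref{lem_UL:common-subseq=using-ultrafilter--a}, I would first apply \autoref{lem_UL:ultrafilter_pick_cvgt_subsequence} separately to $(a_i)_{i \in \nn}$ and to $(b_i)_{i \in \nn}$ to obtain index sets $I_a, I_b \subseteq \nn$ with $\omega(I_a) = \omega(I_b) = 1$ such that the subsequence of $(a_i)$ along $I_a$ converges to $\limomega a_i$ and the subsequence of $(b_i)$ along $I_b$ converges to $\limomega b_i$. By the above remark, $\omega(I_a \cap I_b) = 1$, so in particular $I_a \cap I_b$ is infinite. Enumerating $I_a \cap I_b = \{i_j \mid j \in \nn\}$ in increasing order then gives the desired common subsequence.

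For \ref{lem_UL:common-subseq=using-ultrafilter--b}, the strategy is dual: start from the given common subsequence and build an ultrafilter concentrated on it. Setting $J := \{i_j \mid j \in \nn\}$, \autoref{lem_UL:existence_of_ultrafilter} provides a non-principal ultrafilter $\omega$ with $\omega(J) = 1$. It remains to check that $\limomega a_i = a$ and $\limomega b_i = b$ for this particular $\omega$. Apply \autoref{lem_UL:ultrafilter_pick_cvgt_subsequence} to $(a_i)$ to obtain $I_a \subseteq \nn$ with $\omega(I_a) = 1$ along which $(a_i)$ converges to $\limomega a_i$; then $\omega(J \cap I_a) = 1$, so this intersection is infinite, and along it $(a_i)$ converges both to $a$ (since $J \cap I_a \subseteq J$) and to $\limomega a_i$, forcing the two to coincide. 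The same argument with $b$ in place of $a$ finishes the proof.

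I do not anticipate any real obstacle here: the content is essentially that \myquote{$\omega$-a.e.~$1$} behaves like an ultrafilter, which is exactly the remark already recorded in the paper, and both directions are short once one is willing to invoke \autoref{lem_UL:ultrafilter_pick_cvgt_subsequence} and \autoref{lem_UL:existence_of_ultrafilter} twice.
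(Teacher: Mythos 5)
Your proposal is correct and follows essentially the same route as the paper: part \ref{lem_UL:common-subseq=using-ultrafilter--a} via intersecting two measure-one index sets from \autoref{lem_UL:ultrafilter_pick_cvgt_subsequence}, and part \ref{lem_UL:common-subseq=using-ultrafilter--b} by building the ultrafilter on the given subsequence via \autoref{lem_UL:existence_of_ultrafilter} and comparing limits along a common measure-one set. The only cosmetic difference is that the paper disposes of \ref{lem_UL:common-subseq=using-ultrafilter--b} by citing the second part of \autoref{lem_UL:ultrafilter_pick_cvgt_subsequence} directly, whereas you unpack that same argument explicitly.
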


\begin{proof}
 \par\smallskip\noindent\ref{lem_UL:common-subseq=using-ultrafilter--a}
 By \autoref{lem_UL:ultrafilter_pick_cvgt_subsequence}, 
 there are subsequences of indices $I, J \subseteq \nn$ 
 with measures $\omega(I) = \omega(J) = 1$,
 \begin{align*}
  &a_j \to \limomega a_i \as j \to \infty~\text{for}~j \in I 
  \quad\aand\\
  &b_j \to \limomega b_i \as j \to \infty~\text{for}~j \in J.
 \end{align*}
 In particular, $I \cap J$ has $\omega$-measure $1$. 
 Hence, it is infinite and provides a common subsequence which satisfies the claim.
 
 \par\smallskip\noindent\ref{lem_UL:common-subseq=using-ultrafilter--b}
 This follows directly from the second part 
 of \autoref{lem_UL:ultrafilter_pick_cvgt_subsequence} 
 since the non-principal ultrafilter constructed there 
 depends only on the indices of the convergent subsequence.
\end{proof}

\begin{cor}
 Let $(a_i)_{i \in \nn}$ and $(b_i)_{i \in \nn}$ be bounded sequences of real numbers.
 \begin{enumerate}
  \item If $a_i \leq b_i$ for all $i \in \nn$, then $\limomega a_i \leq \limomega b_i$.
  \item $\limomega (a_i + b_i) = \limomega a_i + \limomega b_i$.
 \end{enumerate}
\end{cor}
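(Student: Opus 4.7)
The plan is that both parts reduce easily to \autoref{lem_UL:common-subseq=using-ultrafilter} together with the remark preceding \autoref{lem_UL:existence_of_ultrafilter} that the intersection of two sets of $\omega$-measure~$1$ again has $\omega$-measure~$1$. First one observes that both sequences $(a_i)$, $(b_i)$, and hence $(a_i+b_i)$, are bounded, so all three ultralimits $\limomega a_i$, $\limomega b_i$, $\limomega(a_i+b_i)$ are well-defined real numbers.

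For part~(a), I would apply \autoref{lem_UL:common-subseq=using-ultrafilter}\ref{lem_UL:common-subseq=using-ultrafilter--a} to the two bounded sequences $(a_i)$ and $(b_i)$ to obtain a subsequence $(i_j)_{j \in \nn}$ along which simultaneously $a_{i_j} \to \limomega a_i$ and $b_{i_j} \to \limomega b_i$ as $j \to \infty$. Since $a_{i_j} \leq b_{i_j}$ for every $j$, passing to the limit $j \to \infty$ preserves this weak inequality and yields $\limomega a_i \leq \limomega b_i$.

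For part~(b), the most transparent route is to work directly with the defining characterization of the ultralimit. Let $a := \limomega a_i$ and $b := \limomega b_i$. For arbitrary $\eps > 0$ the sets
\[
 A_\eps := \{i \in \nn \mid |a_i - a| < \tfrac{\eps}{2}\}
 \quad\aand\quad
 B_\eps := \{i \in \nn \mid |b_i - b| < \tfrac{\eps}{2}\}
\]
both have $\omega$-measure $1$ by definition of $a$ and $b$, so by the remark cited above $\omega(A_\eps \cap B_\eps) = 1$. For every $i \in A_\eps \cap B_\eps$ the triangle inequality gives $|(a_i + b_i) - (a+b)| < \eps$, so $\{i \in \nn \mid |(a_i+b_i) - (a+b)| < \eps\} \supseteq A_\eps \cap B_\eps$ also has $\omega$-measure $1$. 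Since $\eps > 0$ was arbitrary, the uniqueness clause in the definition of $\limomega$ forces $\limomega(a_i+b_i) = a+b = \limomega a_i + \limomega b_i$.

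There is essentially no obstacle here; the only thing that requires a moment of care is noticing that $(a_i+b_i)$ is again bounded, so that its ultralimit exists and uniqueness can be invoked. Everything else is a direct application of the previously established lemma and the elementary measure-theoretic observation about intersections.
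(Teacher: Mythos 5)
Your proof is correct. Part (a) is exactly the paper's argument: apply \autoref{lem_UL:common-subseq=using-ultrafilter} to extract a common subsequence along which both sequences converge to their respective ultralimits, then pass the weak inequality to the limit. For part (b), however, you take a genuinely different route. The paper's (very terse) proof observes that \autoref{lem_UL:common-subseq=using-ultrafilter} extends from two to finitely many sequences, applies it to the three sequences $(a_i)$, $(b_i)$ and $(a_i+b_i)$ to get one subsequence along which all three converge to their ultralimits, and then invokes additivity of ordinary limits. You instead argue directly from the defining characterization of $\limomega$: the sets where $|a_i - a| < \frac{\eps}{2}$ and $|b_i - b| < \frac{\eps}{2}$ each have $\omega$-measure $1$, hence so does their intersection, on which $|(a_i+b_i)-(a+b)| < \eps$; uniqueness then forces $\limomega(a_i+b_i) = a+b$. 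Your version has the advantage of not relying on the unproved extension of the subsequence lemma to three sequences (the paper merely asserts this), at the cost of reproving a small piece of measure-theoretic bookkeeping; the paper's version buys brevity by reusing existing machinery. One minor point worth making explicit in your argument: you use that a superset of a set of $\omega$-measure $1$ again has $\omega$-measure $1$, which follows from finite additivity together with $\omega$ being a probability measure.
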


\begin{proof}
 Observe that \autoref{lem_UL:common-subseq=using-ultrafilter} holds not only for two 
 but finitely many sequences for real numbers.
 Applying this and the corresponding statements for limits of sequences of real numbers 
 implies the claim.
\end{proof}


An ultralimit is a \myquote{limit space} assigned to a (pointed) sequence of metric spaces 
by using a non-principal ultrafilter.
The construction of this ultralimit is related to \GH convergence in the sense that
such a limit space is a sublimit in the pointed \GH sense.
On the other hand, given any sublimit in the pointed \GH sense,
there exists a non-principal ultrafilter 
such that the corresponding ultralimit is exactly this sublimit. 
This fact can be extended to a similar statement about finitely many different sequences 
and corresponding sublimits coming from a common subsequence.

\begin{defn}[{\cite[Definition I.5.50]{bridson-haefliger}}]
 Let $\omega$ be a non-principal ultrafilter on $\nn$,
 $(X_i,d_i,p_i)$, $i \in \nn$, be pointed metric spaces and 
 \[
  X_\omega := \{ [(x_i)_{i \in \nn}] 
  \mid x_i \in X_i~\aand~\sup\nolimits_{i \in \nn} d_i(x_i,p_i) < \infty\}
 \]
 where 
 \[(x_i)_{i \in \nn} \sim (y_i)_{i \in \nn}~\text{if and only if}~\limomega d_i(x_i,y_i) = 0.\]
 Furthermore, 
 let $d_\omega([(x_i)_{i \in \nn}],[(y_i)_{i \in \nn}]) := \limomega d_i(x_i,y_i)$. 
 Then $(X_\omega, d_\omega)$ is a metric space, 
 called \emph{ultralimit} of $(X_i,d_i,p_i)$ and denoted by $\limomega (X_i,d_i,p_i)$. 
\end{defn}

\begin{rmk}
 Let $\omega$ be a non-principal ultrafilter on $\nn$, 
 $(X_i,d_i,p_i)$, $i \in \nn$, be pointed metric spaces and $Y_i \subseteq X_i$.
 The limit $(Y_\omega, d_{Y_\omega}) := \limomega (Y_i,d_i,p_i)$ 
 is canonically a subset of $(X_\omega, d_{X_\omega}) := \limomega (X_i,d_i,p_i)$: 
 Obviously, 
 \begin{align*}
 &\{(y_i)_{i \in \nn} 
     \mid y_i \in Y_i~\aand~\sup\nolimits_i d_i(y_i,p_i) < \infty\} \\
 &\subseteq \{ (x_i)_{i \in \nn} 
     \mid x_i \in X_i~\aand~\sup\nolimits_i d_i(x_i,p_i) < \infty\}.
 \end{align*}
 Since the metric is the same on both $X_i$ and $Y_i$ 
 and since the equivalence classes are only defined by using the ultrafilter and the metric,
 $Y_\omega \subseteq X_\omega$.
 With the same argumentation, the metric coincides: For $y_i,y_i' \in Y_i$,
 \begin{align*}
  &d_{Y_\omega}([(y_i)_{i \in \nn}]_{Y_\omega},[(y_i')_{i \in \nn}]_{Y_\omega}) 
  \\&= \limomega d_i(y_i,y_i') 
  \\&= d_{X_\omega}([(y_i)_{i \in \nn}]_{X_\omega},[(y_i')_{i \in \nn}]_{X_\omega}).
 \end{align*}
\end{rmk}

\begin{lemma}[{\cite[Lemma I.5.53]{bridson-haefliger}}]
 The ultralimit of a sequence of metric spaces is complete.
\end{lemma}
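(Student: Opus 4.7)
The plan is a diagonal argument: from a Cauchy sequence in $X_\omega$ extract a fast-converging subsequence, encode the rate of convergence as a countable family of $\omega$-full subsets of $\nn$, and glue these together to build a representative for the limit.

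Let $(x^n)_{n \in \nn}$ be a Cauchy sequence in $X_\omega = \limomega(X_i,d_i,p_i)$ with $x^n = [(x^n_i)_{i \in \nn}]$. Since a Cauchy sequence converges once some subsequence does, one may pass to a subsequence and assume $d_\omega(x^n,x^{n+1}) < 2^{-n}$ for all $n \geq 1$. By the definition of $\limomega$ the set
\[ B_n := \{i \in \nn \mid d_i(x^n_i,x^{n+1}_i) < 2^{-n}\} \]
has $\omega$-measure $1$ for every $n$; hence so does every finite intersection $C_n := B_1 \cap \cdots \cap B_n$ and, since $[n,\infty)$ is cofinite, also $C_n \cap [n,\infty)$.

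Now define $N(i) := \max\{n \leq i \mid i \in C_n\}$ (with $N(i) := 1$ if this set is empty) and $y_i := x^{N(i)}_i$. For any fixed $n$ and any $i \in C_n \cap [n,\infty)$ one has $N(i) \geq n$, and, since the $C_n$ are decreasing, $i \in B_k$ for all $k \leq N(i)$; a telescope then yields
\[ d_i(x^n_i, y_i) \leq \sum_{k=n}^{N(i)-1} d_i(x^k_i,x^{k+1}_i) < \sum_{k \geq n} 2^{-k} = 2^{-n+1}. \]
Since this inequality holds on an $\omega$-full index set, passing to $\limomega$ gives $d_\omega(x^n,[(y_i)]) \leq 2^{-n+1} \to 0$, provided $[(y_i)]$ is actually an element of $X_\omega$.

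That last clause requires $\sup_i d_i(y_i,p_i) < \infty$. Setting $R := \sup_i d_i(x^1_i,p_i) < \infty$, the $n = 1$ case of the above telescoping bound together with the triangle inequality gives $d_i(y_i,p_i) \leq R + 2$ for every $i \in C_1$; redefining $y_i := p_i$ on the $\omega$-null complement $\nn \setminus C_1$ yields a representative with uniformly bounded distance to $(p_i)$ without altering the equivalence class. The only real subtlety is precisely this bookkeeping step---turning a countable family of $\omega$-full sets $C_n$ into a single representative of the limit whose sup-distance to $(p_i)$ is finite; the rest is a routine diagonal chase.
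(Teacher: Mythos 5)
The paper does not actually prove this lemma itself: it is stated with only a citation to Lemma I.5.53 of Bridson--Haefliger, so there is no in-paper proof to compare against. Your argument is correct and is essentially the standard proof from that source: pass to a fast Cauchy subsequence, record the $\omega$-full sets $B_n$ on which the index-wise distances realize the bound $2^{-n}$, and diagonalize via $N(i)$ to build a representative of the limit; the two delicate points --- that the telescoping estimate holds on the $\omega$-full set $C_n\cap[n,\infty)$ and hence passes to $\limomega$, and that redefining $y_i:=p_i$ on the $\omega$-null complement of $C_1$ makes $\sup_i d_i(y_i,p_i)$ finite without changing the equivalence class --- are both handled correctly.
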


In order to prove the correspondence of sublimits and ultralimits, 
first, compact metric spaces are investigated.

\begin{prop}\label{prop_UL:ultralimits_are_sublimits_cpt}
 Let $\omega$ be a non-principal ultrafilter on $\nn$ and
 $(X_i,d_i,p_i)$, $i \in \nn$, be pointed compact metric spaces 
 with compact ultralimit $(X_\omega,d_\omega)$ 
 and define $p_\omega := [(p_i)_{i \in \nn}] \in X_\omega$.
 Then $\limomega \dGH((X_i,p_i),(X_\omega,p_\omega)) = 0$.
\end{prop}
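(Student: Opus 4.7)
The plan is to show that for every $\eps > 0$ the set of indices $i$ satisfying $\dGH((X_i, p_i), (X_\omega, p_\omega)) < C\eps$ (for some absolute constant $C$) has $\omega$-measure $1$; since $\eps > 0$ is arbitrary, this gives $\limomega \dGH((X_i, p_i), (X_\omega, p_\omega)) = 0$. For each such $i$ I will construct an explicit pointed $O(\eps)$-\GH approximation and then invoke \autoref{dgh_small_iff_eps_approx}.

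Fix $\eps > 0$. Compactness of $X_\omega$ yields a finite $\eps$-net $\{q^1, \ldots, q^N\} \subseteq X_\omega$ with $q^1 = p_\omega$, and one chooses representative sequences $(q_i^k)_{i \in \nn}$ with $q_i^1 = p_i$. Because $\limomega d_i(q_i^j, q_i^k) = d_\omega(q^j, q^k)$ for every one of the finitely many pairs $(j, k)$, the set
\[
  A_\eps := \bigcap_{j, k = 1}^N \{i \in \nn \mid |d_i(q_i^j, q_i^k) - d_\omega(q^j, q^k)| < \eps\}
\]
is a finite intersection of sets of $\omega$-measure $1$ and hence itself has $\omega$-measure $1$.

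The crucial intermediate step is to show that on some $\omega$-measure-one set $B \subseteq \nn$ the lifted net $\{q_i^1, \ldots, q_i^N\}$ is a $2\eps$-net of $X_i$. Suppose not: on an $\omega$-measure-one set one could pick $x_i \in X_i$ with $d_i(x_i, q_i^k) \geq 2\eps$ for every $k$. The essential use of compactness of $X_\omega$ is precisely to guarantee that such a sequence satisfies $\sup_i d_i(x_i, p_i) < \infty$ and therefore represents an element $x := [(x_i)] \in X_\omega$; the $\eps$-net property of $\{q^k\}$ then yields some $k$ with $d_\omega(x, q^k) < \eps$, contradicting $d_\omega(x, q^k) = \limomega d_i(x_i, q_i^k) \geq 2\eps$.

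Once both nets are available, on $A_\eps \cap B$ I define $f_i \colon X_\omega \to X_i$ by $f_i(p_\omega) := p_i$ and otherwise $f_i(x) := q_i^{k(x)}$ where $k(x)$ is any index with $d_\omega(x, q^{k(x)}) < \eps$; the map $g_i \colon X_i \to X_\omega$ is defined symmetrically using the lifted $2\eps$-net and $g_i(p_i) := p_\omega$. A direct triangle-inequality check relying only on the bounds recorded in $A_\eps$ together with the two net properties shows $(f_i, g_i) \in \Isom{O(\eps)}((X_\omega, p_\omega), (X_i, p_i))$, and \autoref{dgh_small_iff_eps_approx} converts this into the desired bound on $\dGH$. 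The main obstacle is the net-lifting step: one must rule out bad sequences escaping to infinity, which is exactly the role compactness of $X_\omega$ (together with the ultralimit's boundedness convention) plays in the argument.
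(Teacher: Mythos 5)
Your strategy is the mirror image of the paper's: you fix a finite $\eps$-net in the limit $X_\omega$ and lift it down to the $X_i$, whereas the paper fixes $\eps$-nets in the $X_i$ and pushes them forward to $X_\omega$. Everything downstream of the net-lifting step (the measure-one set $A_\eps$, the nearest-net-point maps $f_i,g_i$, the appeal to \autoref{dgh_small_iff_eps_approx}) is routine and correct up to constants. The genuine gap is the net-lifting step itself, precisely at the sentence where you assert that compactness of $X_\omega$ guarantees $\sup_i d_i(x_i,p_i)<\infty$ for the bad sequence. It does not. Compactness of $X_\omega$ only constrains sequences that are already bounded relative to $(p_i)$; a sequence with $\sup_i d_i(x_i,p_i)=\infty$ simply does not define a point of $X_\omega$ and is therefore invisible to any $\eps$-net of $X_\omega$, so your contradiction never gets off the ground.

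Concretely, take $X_i:=\{0,i\}\subseteq\rr$ with $p_i:=0$. Every sequence $(x_i)$ with $\sup_i d_i(x_i,p_i)<\infty$ satisfies $x_i=0$ for all but finitely many $i$, so $X_\omega$ is a single point and in particular compact. The lifted net is $\{p_i\}=\{0\}$, which is not a $2\eps$-net of $X_i$ once $i>2\eps$, and the only bad choice $x_i=i$ is unbounded. (In fact $\dGH((X_i,p_i),(X_\omega,p_\omega))=\sup\{d_i(0,x)\mid x\in X_i\}=i$ here, so the conclusion of the proposition itself fails for this example; the statement implicitly needs a hypothesis such as $\limomega\diam(X_i)<\infty$, which holds automatically in the paper's application to closed balls $\B_r^{X_i}(p_i)$ but is not a consequence of compactness of $X_\omega$. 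The paper's own proof meets the same subtlety at the point where it forms the classes $[(q_i^j)]$ to show $\omega(\{i\mid n_i=n_\omega\})=1$, so you are in good company, but the step you label as ``the role compactness of $X_\omega$ plays'' is exactly the step that is unjustified.) To repair your argument you must either add a uniform diameter bound on the $X_i$ as a hypothesis, or derive $\limomega\diam(X_i)<\infty$ from some additional assumption before attempting the lift; once that is in place, every sequence $x_i\in X_i$ defines a point of $X_\omega$ and your contradiction argument, and hence the rest of the proof, goes through.
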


\begin{proof}
 The statement will be proven by using $\eps$-nets:
 First, finite $\eps$-nets in $X_i$ will be fixed 
 and it will be proven that their ultralimit is a finite $\eps$-net in $X_\omega$.
 Then the \GH distance of these nets will be estimated.
 Finally, using the triangle inequality and $\eps \to 0$ prove the claim.

 Fix $\eps > 0$.
 For every $i \in \nn$, 
 fix a finite $\eps$-net $A_i^{\eps} = \{a_i^1,\dots,a_i^{n_i}\}$ in the compact space $X_i$ 
 with $a_i^1 = p_i$, 
 i.e.~$d(a_i^k,a_i^l) \geq \eps$ for all $k \ne l$ and $X_i = \bigcup_{j=1}^{n_i} B_{\eps}(a_i^j)$.
 Let $A_\omega^{\eps}$ be the ultralimit of these $A_i^{\eps}$,
 i.e.
 \[
  A_\omega^{\eps} 
  = \{[(a_i)_{i \in \nn}] \mid \forall i \in \nn \, \exists 1 \leq j_i \leq n_i: a_i = a_i^{j_i}\} 
  \subseteq X_\omega,
 \]
 and let $p_\omega := [(p_i)_{i \in \nn}] \in A_\omega^{\eps}$.
 Then $A_\omega^{\eps}$ is again a finite $\eps$-net in $X_\omega$: 

 Let $[(a_i^{k_i})_{i \in \nn}], [(a_i^{l_i})_{i \in \nn}] \in A_\omega^{\eps}$.
 By definition, 
 \[[(a_i^{k_i})_{i \in \nn}] = [(a_i^{l_i})_{i \in \nn}]
 \text{ if and only if }
 \limomega d_i(a_i^{k_i},a_i^{l_i}) = 0.\]
 Since $d_i(a_i^{k_i},a_i^{l_i}) = 0$ exactly for those $i$ with $k_i = l_i$ 
 and $d_i(a_i^{k_i},a_i^{l_i}) \geq \eps$ otherwise,
 this implies 
 \[[(a_i^{k_i})_{i \in \nn}] = [(a_i^{l_i})_{i \in \nn}]
 \text{ if and only if }
 \omega(\{i \in \nn \mid k_i = l_i \}) = 1.\]
 In particular, for $[(a_i^{k_i})_{i \in \nn}] \ne [(a_i^{l_i})_{i \in \nn}]$,
 \[d_{X_\omega}([(a_i^{k_i})_{i \in \nn}] , [(a_i^{l_i})_{i \in \nn}]) 
 =\limomega d_i(a_i^{k_i},a_i^{l_i}) 
 \geq \eps.\]
 Furthermore, for arbitrary $[(x_i)_{i \in \nn}]$ there are $a_i^{j_i}$ 
 such that $x_i \in B_{\eps}(a_i^{j_i})$.
 Thus,
 \begin{align*}
  d_\omega([(x_i)_{i \in \nn}], [(a_i^{j_i})_{i \in \nn}])
  = \limomega d_i(x_i, a_i^{j_i})
  < \eps.
 \end{align*}
 This proves that $A_\omega^{\eps}$ is an $\eps$-net in $X_\omega$. 
 It remains to prove that $A_\omega^{\eps}$ is finite:
 Assume it is not. 
 Then $\bigcup_{p \in A_\omega^{\eps}} B_{\eps}(p)$ is an open cover of $X_\omega$, 
 and thus, has a finite subcover $X_\omega = \bigcup_{j=1}^k B_{\eps}(q_j)$ 
 with $q_j \in A_\omega^{\eps}$.
 Hence, for any $q \in A_\omega^{\eps} \setminus \{q_1,\dots,q_k\}$ 
 there exists $q_j$ such that $q \in B_{\eps}(q_j)$. 
 This is a contradiction to $d_\omega(q,q_j) \ge \eps$.

 Let $n_\omega < \infty$ denote the cardinality of $A_\omega^{\eps}$
 and $I := \{i \in \nn \mid n_i = n_\omega\}$ be those indices 
 such that $A_i^{\eps}$ and $A_\omega^{\eps}$ have the same cardinality.
 Then $\omega(I) = 1$:
 
 Let $A_\omega^{\eps} = \{z_1,\dots,z_{n_\omega}\}$ 
 and $z_k = [(a_i^{j_i^k})_{i \in \nn}]$ 
 where $1 \leq j_i^k \leq n_i$ for each $1 \leq k \leq n_{\omega}$.
 For $k \ne l$,
 one has $1 = \omega(\{i \in \nn \mid j_i^k \ne j_i^l\})$.
 Thus, 
 \begin{align*}
 1 
  &= \omega\big(\bigcap\nolimits_{1 \leq k < l \leq n_\omega} 
      \{i \in \nn \mid j_i^k \ne j_i^l\}\big)\\
  &= \omega(\{i \in \nn \mid \forall 1 \leq k < l \leq n_\omega : j_i^k \ne j_i^l\}) \\
  &\geq \omega(\{i \in \nn \mid n_\omega \leq n_i\}) \\
  &= \omega (I \cup J)
 \end{align*}
 where $J := \{i \in \nn \mid n_i > n_\omega\}$. 
 Assume $\omega(J) = 1$.
 For all $1 \leq j \leq n_\omega + 1$, let 
 \[ q_i^j :=
  \begin{cases}
   a_i^j	& \text{if } i \in J, \\
   p_i		& \text{if } i \notin J \\
  \end{cases}
 \]
 and $\tilde{z}_j := [(q_i^j)_{i \in \nn}] \in A_\omega^{\eps}$.
 By definition,
 $q_i^j = q_i^l$ if and only if $k \ne l$ or $i \in I$.
 Hence, if $k \ne l$, then
 $\omega(\{i \in \nn \mid q_i^k = q_i^l\})
 = \omega(\nn \setminus J) 
 = 1 - \omega(J) 
 = 0$.
 Thus, $\tilde{z}_k \ne \tilde{z}_l$ 
 and $\{\tilde{z}_1, \dots, \tilde{z}_{n_\omega+1}\} \subseteq A_\omega^{\eps}$,
 hence, $n_\omega+1 \leq n_\omega$. This is a contradiction.
 Therefore, $\omega (J) = 0$ and $\omega(I) = \omega(I \cup J) = 1$.

 Similarly,
 for all $1 \leq j \leq n_\omega$, let 
 \[ p_i^j :=
  \begin{cases}
   a_i^j	& \text{if } i \in I, \\
   p_i		& \text{if } i \notin I \\
  \end{cases}
 \]
 and $y_j := [(p_i^j)_{i \in \nn}] \in A_\omega^{\eps}$.
 Analogously, $y_k = y_l$ if and only if $k = l$.
 This implies $A_\omega^{\eps} = \{ y_1, \dots, y_{n_\omega} \} $.
 In particular, $y_1 = p_\omega$.

 For $1 \leq k < l \leq n_\omega$, 
 define 
 \begin{align*}
  I^{kl}_\delta 
  := &\{ i \in I \mid |d_\omega(y_k,y_l) - d_i(a_i^k, a_i^l)| < \delta \} 
  \\= &\{ i \in I \mid |d_\omega(y_k,y_l) - d_i(p_i^k, p_i^l)| < \delta \}.
 \end{align*}
 Since $d_\omega(y_k,y_l) = \limomega d_i(p_i^k, p_i^l)$ by definition,
 $\omega(I^{kl}_\delta) = 1$ for any $\delta > 0$.
 Therefore, $\limomega \delta_i^{kl} = 0$ 
 for $\delta_i^{kl} := |d_\omega(y_k,y_l) - d_i(a_i^k, a_i^l)|$.
 Thus, $\limomega \eps_i = 0$
 where $\eps_i := \max\{ \delta_i^{kl} \mid 1 \leq k < l \leq n_\omega\}$ for $i \in I$
 and $\eps_i := 0$ for $i \notin I$. 

 Let $i \in I$ be fixed 
 and define $f_{i} : A_{i}^{\eps} \to A_\omega^{\eps}$ 
 and $g_{i} : A_\omega^{\eps} \to A_{i}^{\eps}$ by
 \[ f_{i}(a_i^j) := y_j~\aand~g_{i}(y_j) = a_i^j\]
 for $1 \leq j \leq n_{\omega}$.
 In particular, 
 \[
  f_{i}(p_i) = f_{i}(a_i^1) = y_1 = p_\omega
  \quad\aand\quad
  g_{i}(p_\omega) = g_{i}(y_1) = a_i^1 = p_i.
 \]
 Obviously, $f_{i} \circ g_{i} = \id_{A_\omega^{\eps}}$ and $g_{i} \circ f_{i} = \id_{A_{i}^{\eps}}$.
 Further, for $1 \leq k < l \leq n_{\omega}$,
 \begin{align*}
  &|d_\omega(f_{i}(a_{i}^k), f_{i}(a_{i}^l)) - d_{i}(a_{i}^k, a_{i}^l)|
  = |d_\omega(y_k, y_l) - d_{i}(a_{i}^k, a_{i}^l)|
  = \delta_{i}^{kl} 
  \leq \eps_{i},
  \intertext{and analogously,}
  &|d_{i}(g_{i}(y_k), g_{i}(y_l)) - d_\omega(y_k,y_l)| \leq \eps_{i},
 \end{align*}
 i.e.~$(f_{i},g_{i}) \in \Isom{\eps_{i}}((A_{i}^{\eps},p_i),(A_\omega^{\eps},p_\omega))$.
 Thus, $\dGH((A_{i}^{\eps}, p_{i}), (A_\omega^{\eps}, p_\omega)) \leq 2 \eps_i$ 
 for any $i \in I$.

 For any compact metric space $(Z,d_Z)$ and $\eps$-net $A \subseteq Z$,
 \begin{align*}
  \dH^{d_Z}(Z,A) 
  = \inf \{r > 0 \mid B_r(A) \supseteq Z = B_{\eps}(A)\}
  \leq \eps.
 \end{align*}
 Hence, for any $p \in A$, $\dGH((A,p),(Z,p)) \leq d_H(Z,A) + d_Z(p,p) \leq \eps$.

 Applying this general statement, for fixed $i \in I$ and $\eps > 0$,
 \begin{align*}
  &\dGH((X_{i},p_{i}), (X_\omega, p_\omega)) \\
  &\leq \dGH((X_{i},p_{i}), (A_{i}^{\eps}, p_{i})) 
   \\&\quad + \dGH((A_{i}^{\eps}, p_{i}), (A_\omega^{\eps}, p_\omega)) 
   \\&\quad + \dGH((A_\omega^{\eps}, p_\omega), (X_\omega, p_\omega)) \\
  &\leq 2 \eps + 2 \eps_{i}.
 \end{align*}
 In particular, $\limomega \dGH((X_{i},p_{i}), (X_\omega, p_\omega)) \leq 2 \eps$.
 Since this holds for all $\eps > 0$, 
 \[\limomega \dGH((X_{i},p_{i}), (X_\omega, p_\omega)) = 0.\qedhere\]
\end{proof}

\begin{cor}
 Let $\omega$ be a non-principal ultrafilter on $\nn$.
 If the ultralimit of compact metric spaces is compact, it is a sublimit in the pointed \GH sense
 which comes from a subsequence with index set whose $\omega$-measure is $1$.
\end{cor}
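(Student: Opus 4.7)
The plan is to combine the previous proposition with the first half of \autoref{lem_UL:ultrafilter_pick_cvgt_subsequence}. Set
\[a_i := \dGH((X_i,p_i),(X_\omega,p_\omega)).\]
By \autoref{prop_UL:ultralimits_are_sublimits_cpt}, $\limomega a_i = 0$. For this to be well-defined one needs $(a_i)_{i\in\nn}$ to be bounded; this is indeed built into the proof of that proposition, since it exhibits a set $I \subseteq \nn$ with $\omega(I)=1$ on which $a_i \leq 2\eps + 2\eps_i$ (one can alternatively redefine $a_i$ arbitrarily off such a set without affecting $\limomega a_i$, as $\omega$ is zero on the complement).

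Next, I would apply the first part of \autoref{lem_UL:ultrafilter_pick_cvgt_subsequence} to the bounded sequence $(a_i)_{i\in\nn}$: there exists a subsequence $(i_j)_{j\in\nn}$ with
\[\omega(\{i_j \mid j \in \nn\}) = 1\]
such that $a_{i_j} \to \limomega a_i = 0$ as $j \to \infty$. By definition of $a_i$, this gives
\[\dGH((X_{i_j},p_{i_j}),(X_\omega,p_\omega)) \to 0,\]
i.e.~$(X_{i_j},p_{i_j})$ converges in the pointed \GH sense to $(X_\omega,p_\omega)$. This is precisely the claim: $(X_\omega,p_\omega)$ is the pointed \GH sublimit of a subsequence indexed by a set of full $\omega$-measure.

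There is no real obstacle here; the statement is essentially a direct translation of \autoref{prop_UL:ultralimits_are_sublimits_cpt} via the ultrafilter-to-subsequence extraction. The only minor subtlety is the boundedness condition in \autoref{lem_UL:ultrafilter_pick_cvgt_subsequence}, which is handled as above.
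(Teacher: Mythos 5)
Your argument is correct and follows the paper's own proof exactly: apply \autoref{prop_UL:ultralimits_are_sublimits_cpt} to get $\limomega \dGH((X_i,p_i),(X_\omega,p_\omega)) = 0$ and then extract the subsequence of full $\omega$-measure via \autoref{lem_UL:ultrafilter_pick_cvgt_subsequence}. Your remark on boundedness is a reasonable extra precaution that the paper leaves implicit.
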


\begin{proof}
 Let $(X_i,d_i,p_i)$, $i \in \nn$, be pointed compact metric spaces, 
 $(X_\omega,d_\omega)$ their compact ultralimit and $p_\omega = [(p_i)_{i \in \nn}]$.
 By the previous proposition, \[\limomega \dGH((X_i,p_i), (X_\omega, p_\omega)) = 0,\] 
 and by \autoref{lem_UL:ultrafilter_pick_cvgt_subsequence}, 
 there exists a subsequence $(i_j)_{j \in \nn}$ of natural numbers satisfying 
 $\omega(\{i_j \mid j \in \nn\}) = 1$ such that 
 \[\dGH((X_{i_j},p_{i_j}), (X_\omega, p_\omega)) \to 0 \as {j \to \infty}.\qedhere\]
\end{proof}

This result now gives a corresponding result for non-compact spaces.
\begin{prop}\label{prop_UL:ultralimits_are_sublimits_ncpt}
 Let $\omega$ be a non-principal ultrafilter on $\nn$.
 The ultralimit of a sequence of pointed proper length spaces 
 is a sublimit in the pointed \GH sense
 (which comes from a subsequence with index set of $\omega$-measure $1$).
 
 Conversely, 
 the sublimit of a sequence of pointed proper length spaces in the pointed \GH sense
 is the ultralimit with respect to a non-prin\-ci\-pal ultrafilter.
\end{prop}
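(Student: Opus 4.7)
The plan is to prove the two directions separately, both relying on \autoref{prop_UL:ultralimits_are_sublimits_cpt} (the compact case), together with \autoref{lem_UL:existence_of_ultrafilter}, which produces ultrafilters seeing a prescribed infinite index set.

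For the forward direction, given a non-principal ultrafilter $\omega$ and the ultralimit $(X_\omega, p_\omega)$, I would reduce to the compact case ball by ball. For each $r > 0$ the closed ball $\B_r^{X_i}(p_i)$ is compact since $X_i$ is proper, and, using the length-space property to project points at distance slightly greater than $r$ back into the closed ball, one identifies $\limomega(\B_r^{X_i}(p_i), p_i)$ with $(\B_r^{X_\omega}(p_\omega), p_\omega)$ inside $X_\omega$. Granting that this ball is compact, \autoref{prop_UL:ultralimits_are_sublimits_cpt} yields
\[ \limomega \dghp{r}{X_i}{p_i}{X_\omega}{p_\omega} = 0.\]
Picking a countable dense family of radii $r_n$, the weighted combination $a_i := \sum_n 2^{-n}\min(1,\dghp{r_n}{X_i}{p_i}{X_\omega}{p_\omega})$ is a bounded sequence with $\limomega a_i = 0$, so \autoref{lem_UL:ultrafilter_pick_cvgt_subsequence} produces a subsequence $(i_j)$ with $\omega(\{i_j\}) = 1$ along which every $\dghp{r_n}{X_{i_j}}{p_{i_j}}{X_\omega}{p_\omega} \to 0$. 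Invoking \autoref{lem:GHA_restrict_to_smaller_set_and_different_base_point} to control arbitrary radii from the dense family gives $(X_{i_j}, p_{i_j}) \to (X_\omega, p_\omega)$ in the pointed \GH sense, as required by \autoref{def:dGH-noncpt-pt}.

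The main obstacle is verifying that the closed balls in $X_\omega$ are compact; this is not automatic, since without a uniform total-boundedness hypothesis one can build sequences of proper length spaces (for instance Euclidean balls of growing dimension) whose ultralimit has non-compact balls. In the present setting the statement should be read with this compactness implicit, as typically ensured in applications by curvature and covering bounds of the type behind \precptnessThm. With compactness of $\B_r^{X_\omega}(p_\omega)$ in hand, the reduction above goes through.

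For the converse, suppose $(X,p)$ is a pointed \GH sublimit of $(X_i,p_i)$, realized by a subsequence $(X_{i_j},p_{i_j}) \to (X,p)$. By \autoref{lem_UL:existence_of_ultrafilter} applied to $A := \{i_j : j \in \nn\}$, pick a non-principal ultrafilter $\omega$ with $\omega(A)=1$. Using \autoref{cor:dgh_small_iff_eps_approx_noncompact}, fix $\eps_i \to 0$ along $A$ and approximations $(f_i,g_i) \in \Isomp{\eps_i}{1/\eps_i}{X_i}{p_i}{X}{p}$, and extend $g_i$ by setting $g_i(x) := p_i$ for $i \notin A$ or for $x \notin \B_{1/\eps_i}^X(p)$. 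Define $\phi : X \to X_\omega$ by $\phi(x) := [(g_i(x))_{i \in \nn}]_\omega$. The near-isometry estimates together with $\omega(A)=1$ give
\[ d_\omega(\phi(x),\phi(y)) = \limomega d_i(g_i(x),g_i(y)) = d_X(x,y),\]
so $\phi$ is a distance-preserving pointed map. For surjectivity, a bounded representative $[(z_i)]_\omega$ with $d_i(z_i,p_i) \leq R$ has $f_i(z_i) \in \B_{R+1}^X(p)$ for $i \in A$ large; this compact ball admits an $\omega$-limit $z \in X$, and the near-identity $g_i \circ f_i \approx \id$ forces $\phi(z) = [(z_i)]_\omega$, completing the isometry $X \cong X_\omega$ and the proof.
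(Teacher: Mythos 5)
Your proof is correct in substance and, for the forward direction, follows the same strategy as the paper: identify $\B_r^{X_\omega}(p_\omega)$ with $\limomega(\B_r^{X_i}(p_i),p_i)$, apply \autoref{prop_UL:ultralimits_are_sublimits_cpt} ball by ball, and then extract a single subsequence of $\omega$-measure $1$ working for all radii. The extraction step is where you diverge: the paper proves a dedicated radius-selection lemma (\autoref{prop_UL:eps^(r_n)_n<=1/r_n--ultrafilter}) producing radii $r_i$ with $\limomega \frac{1}{r_i}=0$ and $\eps_i^{r_i}\leq\frac{1}{r_i}$ on a set of measure $1$, whereas you sum $\sum_n 2^{-n}\min(1,\eps_i^{r_n})$ over a countable unbounded family of radii, apply \autoref{lem_UL:ultrafilter_pick_cvgt_subsequence} once, and finish with \autoref{lem:GHA_restrict_to_smaller_set_and_different_base_point}; both work, and your version avoids the case analysis in that lemma at the cost of needing finite additivity of $\limomega$ under infinite sums (a routine truncation argument). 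For the converse the routes genuinely differ: the paper reduces it to the forward direction combined with \autoref{lem_GH:GH_limit_unique_up_to_pointed_isometry} (mimicking the second half of \autoref{lem_UL:ultrafilter_pick_cvgt_subsequence}), while you build the isometry $X\to X_\omega$ explicitly as $x\mapsto[(g_i(x))_i]$ from the approximations of \autoref{cor:dgh_small_iff_eps_approx_noncompact}. Your construction is more self-contained but needs one small supplement: the existence of an $\omega$-limit of $(f_i(z_i))_i$ in the compact ball $\B_{R+1}^X(p)$ is only stated in the paper for bounded real sequences, so you should note the standard extension to compact metric spaces (nested $\eps$-balls of measure $1$ plus completeness).

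Your remark on compactness deserves emphasis rather than apology: the closed balls of $X_\omega$ need \emph{not} be compact (cubes $[0,1]^n$ of growing dimension give an ultralimit whose unit ball contains infinitely many points pairwise at distance $\sqrt{2}$), and without this compactness neither \autoref{prop_UL:ultralimits_are_sublimits_cpt} nor the very expression $\dghp{r}{X_i}{p_i}{X_\omega}{p_\omega}$ is available, since $\dGH$ is only defined for compact spaces. The paper's own proof passes over this silently, so your explicit flagging of the needed hypothesis (uniform total boundedness of the balls $\B_r^{X_i}(p_i)$, as supplied in practice by \precptnessThm) is an improvement, not a defect.
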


\begin{proof}
 Let $(X_i,d_i,p_i)$, $i \in \nn$, be pointed proper length spaces, 
 $(X_\omega, d_\omega)$ the corresponding ultralimit 
 and $p_\omega := [(p_i)_{i \in \nn}] \in X_\omega$.
 First it will be shown that an $r$-ball in the ultralimit is the ultralimit of $r$-balls.
 Then applying the corresponding statement for compact sets proves the claim. 

 For $r > 0$, 
 let $X_\omega^r \subseteq X_\omega$ denote the ultralimit of $(\B_r^{X_i}(p_i), d_i, p_i)$. 
 This is a closed subset of $X_\omega$:
 First, observe 
 \[X_\omega^r = \{[(q_i)_{i \in \nn}] \mid q_i \in X_i~\aand~d_i(q_i,p_i) \leq r\}.\]
 Let $(z_n)_{n \in \nn}$ be a sequence in $X_\omega^r$ which converges to a limit $z \in X_\omega$. 
 Denote $z_n = [(q_i^n)_{i \in \nn}]$ and $z = [(q_i)_{i \in \nn}]$ 
 where $q_i^n, q_i \in X_i$ with $d_i(q_i^n,p_i) \leq r$ for all $i,n \in \nn$ 
 and $\sup_{i \in \nn} d_i(q_i,p_i) < \infty$.
 Moreover, $d_\omega(z_n,z) = \limomega d_i(q_i^n,q_i) \to 0$ as $n \to \infty$.
 For all $n \in \nn$,
 $d_\omega(z_n,p_\omega) = \limomega d_i(q_i^n,p_i) \leq r$.
 Hence,
 \begin{align*}
  d_\omega(z,p_\omega)
  &\leq \lim_{n \to \infty} d_\omega(z,z_n) + d_\omega(z_n,p_\omega)
  \leq r
 \end{align*}
 and $z \in X_\omega^r$. This proves that $X_\omega^r$ is closed.
 
 In fact, $X_\omega^r = \B_r^{X_\omega}(p_\omega)$:
 First, let $[(q_i)_{i\in\nn}] \in X_\omega^r \subseteq X_\omega$ be arbitrary.
 Since 
 \[
  d_\omega([(q_i)_{i\in\nn}],[(p_i)_{i\in\nn}])
  = \limomega d_i(p_i,q_i) 
  \leq r,
 \] 
 $[(q_i)_{i\in\nn}] \in \B_r^{X_\omega}(p_\omega)$.
  
 Now let $[(q_i)_{i\in\nn}] \in B_r^{X_\omega}(p_\omega)$ 
 and $I := \{i \in \nn \mid d_i(p_i,q_i) < r\}$.
 Define 
 \[ \tilde{q}_i :=
 \begin{cases}
   q_i & \text{if } i \in I,\\
   p_i & \text{if } i \notin I.
  \end{cases}
 \]
 By definition, $[(\tilde{q}_i)_{i\in\nn}] \in X_\omega^r$.
 Furthermore, $[(q_i)_{i\in\nn}] = [(\tilde{q}_i)_{i\in\nn}] \in X_\omega^r$:  
 Since $[(q_i)_{i\in\nn}] \in B_r^{X_\omega}(p_\omega)$, $0 \leq l:= \limomega d_i(q_i,p_i) < r$. 
 For $\delta := r-l > 0$, 
 \begin{align*}
  1
  &= \omega(\{ i \in \nn \mid |d_i(q_i,p_i) - l| < \delta \}) \\
  &\leq \omega(\{ i \in \nn \mid d_i(q_i,p_i) < l + \delta = r \}) \\
  &= \omega(I).
 \end{align*}
 Thus, for arbitrary $\eps > 0$,
 \begin{align*}
  \omega(\{ i \in \nn \mid d_i(q_i,\tilde{q}_i) < \eps\})
  &\geq \omega(\{i \in \nn \mid q_i = \tilde{q}_i\}) \\
  &= \omega(I)
  = 1.
 \end{align*}
 Therefore, $\limomega d_i(q_i,\tilde{q}_i) = 0$ 
 and $[(q_i)_{i\in\nn}] = [(\tilde{q}_i)_{i\in\nn}] \in X_\omega^r$.
 Consequently, 
 $B_r^{X_\omega}(p_\omega) \subseteq X_\omega^r$. 
 Since $X_\omega^r$ is closed,
 this proves 
 $\B_r^{X_\omega}(p_\omega) \subseteq X_\omega^r$, and hence, equality,
 i.e.~$\B_r^{X_\omega}(p_\omega) = \limomega (\B_r^{X_i}(p_i), d_i, p_i)$.

 For any $r > 0$ 
 and $\eps_i^r := \dGH((\B_r^{X_i}(p_i),p_i),( \B_r^{X_\omega}(p_\omega),p_\omega))$,
 $\limomega \eps_i^r = 0$ by \autoref{prop_UL:ultralimits_are_sublimits_cpt}.
 By \autoref{prop_UL:eps^(r_n)_n<=1/r_n--ultrafilter},
 there exists $r_i>0$ with 
 \[
  \limomega \frac{1}{r_i} = 0 
  \quad\aand\quad 
  \omega\Big(\Big\{i \in \nn \mid \eps_i^{r_i} \leq \frac{1}{r_i}\Big\}\Big) = 1.
 \]

 By \autoref{lem_UL:ultrafilter_pick_cvgt_subsequence}, 
 there is $J = \{i_1 < i_2 < \dots \} \subseteq \nn$ 
 such that $\omega(J) = 1$ and $r_{i_j} \to \infty$.
 Let \[I := J \cap \Big\{i \in \nn \mid \eps_i^{r_i} \leq \frac{1}{r_i}\Big\}.\]
 Then $\omega(I) = 1$ and $I = \{i_{j_1} < i_{j_2} < \dots\} \subseteq J$.
 Thus, $r_{i_{j_l}} \to \infty$ 
 and 
 \[
  \dGH((\B_{r_{i_{j_l}}}^{X_{i_{j_l}}}(p_{i_{j_l}}),p_{i_{j_l}}),
       (\B_{r_{i_{j_l}}}^{X_\omega}(p_\omega),p_\omega)) 
  = \eps_{i_{j_l}}^{r_{i_{j_l}}} 
  \leq \frac{1}{r_{i_{j_l}}} 
  \to 0
 \]
 as $l \to \infty$.
 Now \autoref{cor:dgh_small_iff_eps_approx_noncompact} proves 
 $(X_{i_{j_l}},p_{i_{j_l}}) \to (X_\omega, p_\omega)$ in the pointed \GH sense
 where $\omega(\{i_{j_l} \mid l \in \nn \}) = 1$
 and this finishes the proof of the first part.

 The proof of the second statement
 can be done completely analogously
 to the one of \autoref{lem_UL:ultrafilter_pick_cvgt_subsequence}.
\end{proof}

\begin{lemma}\label{prop_UL:eps^(r_n)_n<=1/r_n--ultrafilter}
 Let $\omega$ be an ultrafilter on $\nn$ 
 and for every $r > 0$ let $(\eps_i^r)_{i \in \nn}$ be a sequence 
 such that $\limomega \eps_i^r = 0$.
 Then there exists a sequence $(r_i)_{i \in \nn}$ of positive real numbers 
 such that $\limomega \frac{1}{r_i} = 0$ 
 and $\omega(\{i \in \nn \mid \eps_i^{r_i} \leq \frac{1}{r_i} \}) = 1$.
\end{lemma}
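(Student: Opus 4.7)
The plan is to adapt the construction used in \autoref{prop:eps^(r_n)_n<=h(1/r_n)} to the ultrafilter setting. The main obstacle is that a non-principal ultrafilter is only finitely additive, so I cannot simultaneously intersect all of the countably many \myquote{good} index sets that arise from the hypotheses; this is circumvented by a diagonal construction that replaces the infinite intersection by one finite intersection per index.

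First, for each $n \in \nn$ I would extract a \myquote{good} index set from the hypothesis applied to $r = n$. Since $\limomega \eps_i^n = 0$, the set
\[
 A_n := \Big\{i \in \nn \,\Big|\, \eps_i^n \leq \tfrac{1}{n}\Big\}
\]
satisfies $\omega(A_n) = 1$. Setting $B_n := A_1 \cap \dots \cap A_n$, finite additivity (applied to the complements) yields $\omega(B_n) = 1$, and the sequence $(B_n)_{n \in \nn}$ is decreasing in $n$.

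The diagonal idea is then to define, for each $i \in \nn$,
\[
 r_i := \max\{n \in \nn \mid n \leq i~\aand~i \in B_n\}
\]
whenever this set is non-empty, and $r_i := 1$ otherwise. For every $N \in \nn$, the inclusion
\[
 B_N \cap \{i \in \nn \mid i \geq N\} \subseteq \{i \in \nn \mid r_i \geq N\}
\]
holds by construction; since $\omega(B_N) = 1$ and $\{i \in \nn \mid i \geq N\}$ has finite complement, the left hand side has $\omega$-measure $1$, and hence so does the right. Thus $\omega(\{i \in \nn \mid r_i \geq N\}) = 1$ for every $N$, and because the bounded sequence $(\tfrac{1}{r_i})_{i \in \nn} \subseteq (0,1]$ has $\omega$-measure $1$ of its indices below $\tfrac{1}{N}$ for every $N$, this is exactly $\limomega \tfrac{1}{r_i} = 0$.

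Finally, whenever $r_i$ is defined by the maximum above --- which is the case for every $i \in B_1$, a set of $\omega$-measure $1$ --- its very definition forces $i \in B_{r_i} \subseteq A_{r_i}$, so $\eps_i^{r_i} \leq \tfrac{1}{r_i}$. Therefore
\[
 \omega\Big(\Big\{i \in \nn \mid \eps_i^{r_i} \leq \tfrac{1}{r_i}\Big\}\Big) \geq \omega(B_1) = 1,
\]
which completes the proof. The only subtle point is the diagonal cutoff $n \leq i$, which guarantees both that the $\max$ is taken over a finite set (so $r_i$ is well-defined) and that $r_i \to \infty$ along $\omega$.
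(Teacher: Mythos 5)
Your proof is correct, but it takes a genuinely different route from the paper's. The paper works with arbitrary real radii: it sets $R_i := \{r > 0 \mid \eps_i^r \leq \frac{1}{r}\}$, observes that no $\omega$-measure-one family of these sets can be uniformly bounded, and then splits into two cases according to whether the $R_i$ are unbounded for $\omega$-almost every $i$ (choosing $r_i \in R_i \cap (i,\infty)$ in the unbounded case, and $r_i$ near $\sup R_i$ in the bounded case). Your argument instead restricts to integer radii, builds the nested measure-one sets $B_n = A_1 \cap \dots \cap A_n$ by finite additivity, and defines $r_i$ as the largest $n \leq i$ with $i \in B_n$; the cutoff $n \leq i$ simultaneously makes the maximum well-defined and, together with non-principality of $\omega$ (which the paper also uses implicitly, even though the statement only says \myquote{ultrafilter}), forces $\limomega \frac{1}{r_i} = 0$. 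Your diagonal construction is shorter and avoids the case distinction entirely, at the harmless cost of only producing integer values of $r_i$; the paper's version stays closer in structure to its earlier non-ultrafilter analogue, \autoref{prop:eps^(r_n)_n<=h(1/r_n)}, and keeps the full set of admissible real radii in view. Both arguments establish exactly the claimed conclusion.
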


\begin{proof}
 For $i \in \nn$, let $R_i := \{r > 0 \mid \eps_i^r \leq \frac{1}{r}\}$. 
 The idea of this proof, similar to the one of \autoref{prop:eps^(r_n)_n<=h(1/r_n)}, 
 is to find a sequence $r_i \in R_i$ with $r_i > i$ for a set of indices of $\omega$-measure $1$.
 Since the $R_i$ need to be non-empty, let $I := \{i \in \nn \mid R_i \ne \emptyset\}$.
 Due to  $\limomega \eps_i^1 = 0$,
 \begin{align*}
  \omega(I)
  = \omega\big(\big\{i \in \nn \mid \exists\,r > 0 : \eps_i^r \leq \frac{1}{r} \big\}\big)
  \geq  \omega(\{i \in \nn \mid \eps_i^1 \leq 1\})
  = 1,
 \end{align*}
 i.e.~$\omega(I) = 1$. 
 Let $J := \{i \in \nn \mid \neg \exists\,C > 0 : R_i \subseteq [0,C]\}$ be 
 the indices of the unbounded sets. 
 In particular, $J \subseteq I$.
 In the following, the cases of $\omega(J) = 0$ and $\omega(J) = 1$ will be distinguished. 
 
 In advance, observe that for sets of indices of $\omega$-measure $1$ 
 the corresponding $R_i$ cannot have a uniform upper bound:
 Let $A \subseteq \nn$ be any subset such that there exists $C > 0$ 
 with $\bigcup_{i \in A} R_i \subseteq [0,C]$ 
 and let $r > C$. 
 Then $i \in A$ implies $r \notin R_i$, i.e.~$\eps_i^r > \frac{1}{r}$. 
 Thus, $\omega(A) \leq \omega(\{i \in \nn \mid \eps_i^r > \frac{1}{r}\}) = 0$.
 
 First, let $\omega(J) = 1$.
 For $i \in J$, choose $r_i \in R_i \cap (i, \infty)$. For $i \in \nn \setminus J$, let $r_i := 1$.
 Then 
 \[\omega\Big(\Big\{i \in \nn \mid \eps_i^{r_i} \leq \frac{1}{r_i}\Big\}\Big) 
 \geq \omega(\{i \in \nn \mid r_i\in R_i\}) 
 \geq \omega(J) = 1.\]
 For arbitrary $\eps > 0$, choose $N \in \nn$ with $\frac{1}{N} \leq \eps$. 
 For $i \in J$ with $i \geq N$, 
 \[\frac{1}{r_i} < \frac{1}{i} \leq \frac{1}{N} \leq \eps\]
 and 
 \[\omega\Big(\Big\{i \in \nn \mid \frac{1}{r_i} \leq \eps \Big\}\Big) 
 \geq \omega(J \cap [N,\infty)) = 1.\]
 Thus, $\limomega \frac{1}{r_i} = 0$ and $r_i$ has the desired properties.
 
 Now let $\omega(J) = 0$. 
 For $i \in I \cap J^c$, 
 let $s_i := \sup R_i$ denote the least upper bound of $R_i$ 
 and choose $r_i \in [\frac{s_i}{2},s_i] \cap R_i$. 
 For $i \in I^c \cup J$, 
 let $s_i := r_i := 1$.
 Then 
 \[
  \omega\Big(\Big\{i \in \nn \mid \eps_i^{r_i} \leq \frac{1}{r_i}\Big\}\Big) 
  \geq \omega(\{i \in \nn \mid r_i\in R_i\}) 
  \geq \omega(I \cap J^c) 
  = 1.
 \]
 Let $\eps > 0$ and $K_{\eps} := \{i \in I \cap J^c \mid \frac{1}{s_i} > \eps\}$.
 Then 
 \[
  \bigcup_{i \in K_{\eps}} R_i 
  \subseteq \bigcup_{i \in K_{\eps}} [0,s_i] 
  \subseteq \Big[0,\frac{1}{\eps}\Big],
 \]
 and thus, by the above argumentation, $\omega(K_{\eps}) = 0$.
 Then, using $\omega(I \cap J^c) = 1$, 
 \begin{align*}
  \omega\big(\big\{i \in \nn \mid \frac{1}{s_i} \leq \eps \big\}\big)
  &= 1 - \omega\big(\big\{i \in \nn \mid \frac{1}{s_i} > \eps \big\}\big) \\
  &= 1 - \omega\big(\big\{i \in I \cap J^c \mid \frac{1}{s_i} > \eps \big\}\big) \\
  &= 1 - \omega(K_{\eps}) 
  = 1.
 \end{align*}
 Hence, $\limomega \frac{1}{s_i} = 0$ and $\frac{1}{r_i} \leq \frac{2}{s_i}$ proves the claim.
\end{proof}

As for bounded sequences of real numbers, 
investigating sublimits coming from the same subsequence is the same as investigating ultralimits.
\begin{lemma}
 Let $(X_i,d_{X_i},p_i)$ and $(Y_i,d_{Y_i},q_i)$, $i \in \nn$, be pointed proper length spaces.
 \begin{enumerate}
 \item 
  Let $\omega$ be a non-principal ultrafilter on $\nn$. 
  Then there exists a subsequence $(i_j)_{j \in \nn}$ such that both
  \begin{align*}
   &(X_{i_j},p_{i_j}) \to \limomega (X_i,d_{X_i},p_i) \quad\aand\\
   &(Y_{i_j},q_{i_j}) \to \limomega (Y_i,d_{Y_i},q_i)
  \end{align*}
  in the pointed \GH sense
  as $j \to \infty$.
 \item 
  Let $(X,d_X,p)$ and $(Y,d_Y,q)$ be pointed length spaces 
  and $(i_j)_{j \in \nn}$ be a subsequence such that both
  \begin{align*}
   &(X_{i_j},p_{i_j}) \to (X,p) \quad\aand\\
   &(Y_{i_j},q_{i_j}) \to (Y,q)
  \end{align*}
  in the pointed \GH sense
  as $j \to \infty$.
  Then there exists a non-principal ultrafilter $\omega$ on $\nn$ 
  such that there are isometries
  \begin{align*}
   &\limomega (X_i,d_{X_i},p_i) \cong (X,p) 
   \quad\aand\\
   &\limomega (Y_i,d_{Y_i},q_i) \cong (Y,q).
  \end{align*}
 \end{enumerate}
\end{lemma}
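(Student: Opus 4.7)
The plan is to imitate the proof of \autoref{lem_UL:common-subseq=using-ultrafilter} for bounded sequences of real numbers, replacing its main ingredient (that a non-principal ultrafilter $\omega$ picks out convergent subsequences on any set of $\omega$-measure $1$) by the corresponding statement for pointed proper length spaces, namely the first part of \autoref{prop_UL:ultralimits_are_sublimits_ncpt}. The two ultralimits $(X_\omega, p_\omega) := \limomega(X_i,d_{X_i},p_i)$ and $(Y_\omega, q_\omega) := \limomega(Y_i,d_{Y_i},q_i)$ will be treated independently; the only coupling is that we must arrange the relevant index sets to have $\omega$-measure $1$ simultaneously.

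For the first part, I would apply the first assertion of \autoref{prop_UL:ultralimits_are_sublimits_ncpt} twice: once to the sequence $(X_i,p_i)$, producing an index set $I_X \subseteq \nn$ with $\omega(I_X) = 1$ along which $(X_i,p_i) \to (X_\omega, p_\omega)$ in the pointed \GH sense, and once to $(Y_i,q_i)$, producing $I_Y \subseteq \nn$ with $\omega(I_Y) = 1$ along which $(Y_i,q_i) \to (Y_\omega, q_\omega)$. The remark following the definition of non-principal ultrafilter gives $\omega(I_X \cap I_Y) = 1$, so $I_X \cap I_Y$ is infinite and can be enumerated as $\{i_1 < i_2 < \ldots\}$. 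Along this common subsequence both convergences hold, which is exactly what is claimed.

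For the second part, I would invoke \autoref{lem_UL:existence_of_ultrafilter} with $A := \{i_j \mid j \in \nn\}$ to obtain a non-principal ultrafilter $\omega$ with $\omega(A) = 1$. Applying the first part of the present lemma (just proved) to this $\omega$ yields an index set $J \subseteq \nn$ with $\omega(J) = 1$ along which $(X_i,p_i) \to (X_\omega, p_\omega)$ and $(Y_i,q_i) \to (Y_\omega, q_\omega)$. Then $\omega(A \cap J) = 1$, in particular $A \cap J$ is infinite, so it is an infinite subset of $\{i_j\}$; call the resulting common sub-subsequence $(i_{j_l})_{l \in \nn}$. Along it one has both $(X_{i_{j_l}}, p_{i_{j_l}}) \to (X_\omega, p_\omega)$ and $(X_{i_{j_l}}, p_{i_{j_l}}) \to (X, p)$ in the pointed \GH sense, and analogously for $Y$. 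Uniqueness of pointed \GH limits (\autoref{lem_GH:GH_limit_unique_up_to_pointed_isometry}) then forces pointed isometries $(X_\omega, p_\omega) \cong (X, p)$ and $(Y_\omega, q_\omega) \cong (Y, q)$.

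There is no substantial analytic obstacle; everything hard has been packaged into \autoref{prop_UL:ultralimits_are_sublimits_ncpt} and \autoref{lem_GH:GH_limit_unique_up_to_pointed_isometry}. The only point that requires a small amount of care is making sure the intersections of $\omega$-measure-$1$ sets retain $\omega$-measure $1$ (so that the resulting sub-subsequences are infinite and inherit both convergences), but this is immediate from finite additivity of $\omega$.
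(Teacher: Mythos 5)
Your proposal is correct and takes essentially the same route as the paper, whose proof consists of the single remark that the argument of \autoref{lem_UL:common-subseq=using-ultrafilter} carries over verbatim once \autoref{prop_UL:ultralimits_are_sublimits_ncpt} replaces \autoref{lem_UL:ultrafilter_pick_cvgt_subsequence}; you have simply written out that analogy (intersecting $\omega$-measure-$1$ index sets for part one, and building $\omega$ from the given subsequence plus invoking uniqueness of pointed limits for part two).
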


\begin{proof}
 Using \autoref{prop_UL:ultralimits_are_sublimits_ncpt},
 the proof can be done completely analogously 
 to the one of \autoref{lem_UL:common-subseq=using-ultrafilter}.
\end{proof}



 
\bibliographystyle{amsalpha}


\end{document}